\setlist[enumerate]{leftmargin=*, labelsep=0.5em}
\newtheorem{theorem}{Theorem}[section]
\newtheorem{lemma}[theorem]{Lemma}
\newtheorem{prop}[theorem]{Proposition}
\newtheorem{corollary}[theorem]{Corollary}
\theoremstyle{definition}
\theoremstyle{remark}
\newtheorem{remark}[theorem]{Remark}
\numberwithin{equation}{section}
\DeclareMathOperator*{\esssup}{ess\,sup}
\DeclareMathOperator*{\essinf}{ess\,inf}
\newcommand{\IND}{\mathbbm{1}}
\newcommand{\capa}{\mathrm{Cap}}
\newcommand{\De}{\mathrm{d}}
\newcommand{\homo}{\mathrm{hom}}
\newcommand{\cC}{\ensuremath{\mathcal C}}
\newcommand{\cE}{\ensuremath{\mathcal E}}
\newcommand{\cF}{\ensuremath{\mathcal F}}
\newcommand{\cG}{\ensuremath{\mathcal G}}
\newcommand{\cH}{\ensuremath{\mathcal H}}
\newcommand{\cL}{\ensuremath{\mathcal L}}
\newcommand{\cM}{\ensuremath{\mathcal M}}
\newcommand{\cQ}{\ensuremath{\mathcal Q}}
\newcommand{\cR}{\ensuremath{\mathcal R}}
\newcommand{\cS}{\ensuremath{\mathcal S}}
\newcommand{\cU}{\ensuremath{\mathcal U}}
\newcommand{\cW}{\ensuremath{\mathcal W}}
\newcommand{\bbE}{\ensuremath{\mathbb E}}
\newcommand{\bbG}{\ensuremath{\mathbb G}}
\newcommand{\bbN}{\ensuremath{\mathbb N}}
\newcommand{\bbP}{\ensuremath{\mathbb P}}
\newcommand{\bbQ}{\ensuremath{\mathbb Q}}
\newcommand{\bbR}{\ensuremath{\mathbb R}}
\newcommand{\bbV}{\ensuremath{\mathbb V}}
\newcommand{\bbZ}{\ensuremath{\mathbb Z}}
\newcommand{\bfD}{\ensuremath{\mathbf D}}
\begin{document}

\title{Hard wall repulsion for the discrete Gaussian free field in random environment on $\mathbb{Z}^d$, $d\geq 3$}


\author{Alberto Chiarini}
\address{Dipartimento di Matematica ``Tullio Levi-Civita''}
\curraddr{Universit\`a degli studi di Padova, Via Trieste, 63 -- 35121 Padova}
\email{chiarini@math.unipd.it}
\thanks{}

\author{Emanuele Pasqui}
\address{Dipartimento di Matematica ``Tullio Levi-Civita''}
\curraddr{Universit\`a degli studi di Padova, Via Trieste, 63 -- 35121 Padova}
\email{pasqui@math.unipd.it}
\thanks{}

\begin{abstract} We study the discrete Gaussian free field (harmonic crystal) on $\mathbb{Z}^d$, $d\ge 3$, with uniformly elliptic and bounded random conductances sampled according to a sufficiently mixing environment measure.  We consider the hard wall event that the field is non-negative on the discrete blow-up of a bounded regular domain $V\subseteq\mathbb{R}^d$, and establish a quenched large deviation asymptotic for its probability. The asymptotic rate is characterized by the essential supremum of the on-site variances and the homogenized capacity of $V$, which arises from a quenched invariance principle. 
We then analyze the law of the field conditioned on the hard wall event. We determine the first-order asymptotic profile for its expectation and demonstrate that an entropic push-away of the field from the origin occurs. Furthermore, we characterize the field pathwise behavior under the constraint, showing that, when properly recentered, the field converges weakly to the (quenched) discrete Gaussian free field.
 A major challenge is the lack of translation invariance in the model. We  exploit the superharmonicity of the conditioned expectation of the field to obtain a sharp uniform lower bound for it. We then use this bound to estimate the harmonicity defect of the conditioned expectation. This is a key step that allows us to prove a corresponding sharp upper bound and establish a pathwise entropic repulsion phenomenon.
\end{abstract}

\subjclass[2010]{}
\keywords{}
\date{\today}
\dedicatory{}
\maketitle

\section{Introduction}
\label{introduction}

In this article we investigate how   \emph{impurities} affect the behavior of the discrete Gaussian free field (GFF) on $\bbZ^d$, $d \geq 3$, in the presence of a hard wall constraint.
The discrete Gaussian free field, also known as harmonic crystal, has been successfully used as an effective model for the interface of separation between pure phases in statistical mechanics, and also arises as a microscopic description of the fluctuations of atoms in a \emph{homogeneous} crystal at non-zero temperature.
The study of the hard wall event for the Gaussian free field, which consists in the atypical scenario that the field remains positive over a macroscopic domain,  dates back to the eighties with~\cite{maes1987effect}, where it is first proven that when the field is conditioned to be positive over a growing sequence of sets, it gets pushed away to infinity. Later this phenomenon was precisely quantified in~\cite{bolthausen1995entropic} for $d \geq 3$, where the authors establish large deviation asymptotics for the hard wall event and determine the exact distance at which the interface is pushed, discovering that it is at the same order of the expected maximum of the unconditioned field (see also~\cite{DEUSCHEL2000333} for related models). Further, in~\cite{deuschel1999entropic} the authors show that when recentering the law of the conditioned field by such diverging amount, it converges to the law of the GFF itself, establishing a pathwise result for the entropic repulsion phenomenon.  Related to this line of investigation, also the study of the extremes of the field over macroscopic domains has been the subject of recent interest (see~\cite{chiariniCiprianiHazra2015note, chiariniCiprianiHazra2016extremes} for $d \geq 3$ and~\cite{biskup2017extrema,biskup2016extreme, bramsonDingZeitouni2016convergence} for $d=2$).

With the goal of modelling inhomogeneous crystals and understanding how impurities influence their behavior, we are naturally led to consider the GFF with \emph{random conductances}. To our knowledge, the first appearance of this model was in~\cite{caputo2003505} and recently several questions related to it were investigated, like level-set percolation in the supercritical regime (see~\cite{chiarini2021disconnection} and~\cite{drewitzGalloPrevost2024} for results on $\bbZ^d$ and Galton-Watson trees, respectively), stochastic homogenization (see~\cite{andres2025scaling,chiariniRuszel2024stochastic}), and, closer to the aim of this work, the scaling limit of the maximum in the critical dimension $d=2$ for the Gaussian free field on the percolation cluster~(see \cite{schweiger2024}) and for the one with uniformly elliptic and bounded i.i.d.\ conductances. One of the main features of the Gaussian free field with random conductances is that the covariances of the field are characterized by the Green function of the random walk among random conductances, known as the Random Conductance Model. It is well known that when the conductances are uniformly elliptic and bounded, the random walk satisfies a quenched invariance principle (see for instance~\cite{barlow2010invariance,sidoravicius2004quenched}), that is, under diffusive rescaling the random walk converges to a Brownian motion with a deterministic covariance matrix $a^{\rm hom}$ for almost all realizations of the environment.

In this article we focus on the hard wall repulsion phenomenon for the discrete Gaussian free field with uniformly elliptic and bounded random conductances on $\bbZ^d$, $d \geq 3$. That is, we look at the event that the field is non-negative over a region $(N V)\cap \bbZ^d$, $V\subseteq \bbR^d$ of volume of order $N^d$. We first obtain \emph{quenched} large deviation asymptotics for the hard wall event as in~\cite{bolthausen1995entropic} under sufficient mixing conditions on the environment. We use a strategy of proof inspired by the recent works~\cite{sznitzman2015disconnection, nitzschner2018entropic,chiarini2019entropic,chiarini2021disconnection} to overcome the lack of translation invariance in the model. We show that the probability of the hard wall event decays exponentially fast with speed $N^{d-2} \log N$ and a rate that reflects both a large-scale effect coming from the homogenization of the random walk in the form of the capacity of $V$ for the limiting Brownian motion, and a microscopic effect from the spins with the highest variances. The speed $N^{d-2} \log N$ in the asymptotics is due to the long-range correlation in the field and to a push up of the field to a height of order $\sqrt{\log N}$, when constrained to be positive on a domain of volume $N^d$. In fact, with a novel approach, to overcome the lack of translation invariance, we show that the expectation of the field conditioned on the hard wall event is at leading order $\mathscr{h}_V^{\rm hom}(\cdot/N)\sqrt{4\overline{g}\log N}$, with $\overline{g}$ being the essential supremum of the on-site variances of the field, and $\mathscr{h}_V^{\rm hom}$ the homogenized harmonic potential of $V$, while the leading order for the expected maximum of the unconditioned field on $V_N$ is $\sqrt{2d\,\overline{g} \log N}$. This has the interesting consequence that in the special case of i.i.d.\ conductances the field gets an extra push up from the randomness in the conductances when compared to the homogeneous case with the same scaling limit. This is interesting when compared to the critical dimension $d=2$, where the behavior of the maximum is captured solely by the effective diffusivity coming from the homogenization of the random walk among random conductances, see~\cite{schweiger2024}. Finally, we also prove that, when properly recentered, the field conditioned to be positive converges weakly to the discrete Gaussian free field with random conductances for almost all realizations of the environment, thus establishing a pathwise entropic repulsion phenomenon akin to the one in~\cite{deuschel1999entropic}.
\vspace{\baselineskip}

We now describe the model and our results in more detail. We look at the discrete Gaussian free field on the $d$-dimensional integer lattice for $d \geq 3$, with random conductances, namely the graph with vertices $\bbZ^d$, edges $\bbE_d$ between nearest-neighboring vertices and  a weight $\omega_e$ (the conductance) for each $e \in \bbE_d$, with the conductances being uniformly bounded from above and below, namely $\omega_e \in [\lambda ,\Lambda]$ for some real numbers $0 <\lambda \leq \Lambda$ and all $e\in \bbE_d$. We let $\Omega_{\lambda,\Lambda} = [\lambda,\Lambda]^{\bbE_d}$ be the set of all weights configurations. Given $\omega \in \Omega_{\lambda,\Lambda}$, we denote by $\bbP^\omega$ the law on $\bbR^{\bbZ^d}$ such that
\begin{equation}
    \label{eq:DefGFF}
    \begin{minipage}{0.7\linewidth}
            under $\bbP^\omega$, the canonical field $(\varphi_x)_{x \in \bbZ^d}$ is centered Gaussian with covariances $\bbE^\omega[\varphi_x \varphi_y]= g^\omega(x,y)$ for every $x,y \in \bbZ^d$\,,
    \end{minipage}
\end{equation}
where $g^\omega$ is the Green function of the continuous-time constant-speed simple random walk on $(\bbZ^d, \bbE_d, \omega)$, see~\eqref{eq:GreenFunction}. 
Informally $\bbP^\omega$ can be seen as a Gibbs measure with formal expression

\begin{equation}\label{eq:DefGFFFormal}
        \bbP^\omega(\De\varphi) ``\propto " \exp\bigg\{-\frac{1}{2} \cE^\omega(\varphi,\varphi)\bigg\} \prod_{x \in \bbZ^d} \lambda^1(\De\varphi_x) \,,
\end{equation}
where 
\begin{equation}
    \cE^\omega(\varphi,\varphi) = \sum_{\{x,y\} \in \bbE_d} \omega_{x,y} (\varphi_y - \varphi_x)^2 \,,
\end{equation}
and $\lambda^1$ is the Lebesgue measure on $\bbR$. We will also use that the law $\bbP^\omega$ of the Gaussian free field can be uniquely characterized by the laws of $(\varphi_x)_{x\in\bbZ^d}$ conditioned on the value of the field in their neighborhoods (see~\eqref{eq:DLR_GFF}). 
Consider a probability measure $\bbQ$ on the measurable space of $\Omega_{\lambda,\Lambda}$ endowed with the canonical $\sigma$-algebra of cylinders, and assume $\bbQ$ to be stationary and ergodic with respect to shifts (we refer to Section~\ref{sec:NotationUsefulResults} for precise definitions). To get the discrete Gaussian free field with random conductances we first sample an environment $\omega \in \Omega_{\lambda,\Lambda}$ with law $\bbQ$ and then we define $\bbP^\omega$ as in~\eqref{eq:DefGFF}. We notice that, if $\mu_x^\omega =\sum_{y\sim x}\omega_{x,y}$, then the energy $\cE^\omega(\cdot,\cdot)$ corresponds to the Dirichlet form  of the constant-speed continuous time random walk among random conductances with generator
\begin{equation}\label{eq:generatorCSRW}
    \cL^\omega f(x) = \frac{1}{\mu_x^\omega}\sum_{y \sim x} \omega_{x,y} (f(y) - f(x)) \,,
\end{equation}
on $\ell^2(\bbZ^d,\mu^\omega)$,
which is known as the Random Conductance Model. The last decade has witnessed a number of important advances in the understanding of this model, especially in the context of stochastic homogenization (we refer to~\cite{andres2025homogenizationtheoryrandomwalks,biskup2011recent} for a recent review of the main progresses). The connection between the random conductance model and the Gaussian free field via~\eqref{eq:DefGFF} allows to better understand the latter, as was done for instance in~\cite{andres2025scaling,chiariniRuszel2024stochastic} in the context of the scaling limit of the Gaussian free field in random environment.

In this article we are interested in the study of \emph{quenched} properties for this field, that is, those that hold for $\bbQ$-almost every realization of the random environment. Our attention will focus on the \emph{hard wall event} for the Gaussian free field $\bbP^\omega$ and on the behavior of the field conditioned on it. 
Let $V \subseteq \bbR^d$ be an open and bounded strongly regular set (see Definition~\ref{def:StronglyRegular}). For $N$ positive integer define the discrete blow-up of $V$ as
\begin{equation}
    V_N = (NV) \cap \bbZ^d\,.
\end{equation}
Then, the hard wall event is given by
\begin{equation}
    \cW_N^+ = \{ \varphi_x \geq 0 \text{ for all } x \in V_N \}
\end{equation}
and can be described as the event that the field stays above a hard wall on $V_N$.
The event $\cW_N^+$ is clearly atypical under $\bbP^\omega$,  and the main result of this article concerns quenched large deviation asymptotics for its probability, and the study of the field conditioned on it for $\bbQ$-almost every environment configuration. The asymptotic rate involves
a large-scale effect coming from the homogenization for the random walk among random conductances, and a microscopic effect from the law $\bbQ$ governing the conductances. The former is represented by $\capa^{\rm hom}(V)$, the capacity of $V$ associated with the limiting Brownian motion for the walk with generator \eqref{eq:generatorCSRW}, while the latter is captured by the constant $\overline{g}$ defined as follows:
\begin{equation}
    \overline{g}:= \esssup_{\omega \in \Omega_{\lambda,\Lambda}} g^\omega (0,0) \in (0,\infty)\,,
\end{equation}
which, by stationarity and ergodicity of the environment, is $\bbQ$-a.s.\ equal  to $\sup_{x \in \bbZ^d} g^\omega (x,x) $. 
We also show that the field conditioned on the hard wall event gets delocalized and, at first order, is pushed up to a level of limiting profile $\mathscr{h}^{\mathrm{hom}}_V(\cdot/N)\sqrt{4 \overline{g}\log N}$ where $\mathscr{h}^{\mathrm{hom}}_V$ is the harmonic potential of $V$ for the limiting Brownian motion (see Section~\ref{sec:NotationUsefulResults}). The value of $\overline{g}$ is in general difficult to determine, but it can be computed explicitly in some cases, for example when the conductances are i.i.d.\ and such that $\essinf\omega_{e} = \lambda$ under $\bbQ$. In this case, one has $\overline{g} = g(0,0)/\lambda$, where $g(\cdot,\cdot)$ is the Green function of the simple random walk, see Remark~\ref{rem:Discussion}, 2).

The main result of this article which subsumes this brief discussion is the following theorem.

\begin{theorem}
\label{thm:MainTheorem} Assume that under $\bbQ$ the conductances are i.i.d.\ (or more generally that $\bbQ$ satisfies a mixing condition, see Remark~\ref{rem:Discussion}, 1)). Then, there exists $\Omega_{\mathrm{typ}}$ with full $\bbQ$-measure such that for all $\omega\in \Omega_{\mathrm{typ}}$
\begin{itemize}
    
    \item[i)] \begin{equation}
        \lim_{N\to\infty} \frac{1}{N^{d-2} \log N}\log \bbP^\omega[\cW_N^+] = - 2 \overline{g} \,\capa^{\mathrm{hom}} (V)\,,
    \end{equation}
    \item[ii)]
    \begin{equation}
        \lim_{N\to\infty} \sup_{x\in  \bbZ^d} \bigg|\frac{\bbE^\omega[\varphi_x|\cW^+_N]}{\sqrt{4 \overline{g} \log N}} - \mathscr{h}^{\mathrm{hom}}_V(x/N)\bigg| = 0\,,
    \end{equation}
    \item[iii)] setting $a_N = \bbE^{\omega} [\varphi_0 | \cW^+_N]$, we have
\begin{equation}
    \hat{\bbP}^\omega_N  \to \bbP^\omega 
\end{equation}
    weakly as $N \to \infty$,
where $\hat{\bbP}^\omega_N$ is the law of the field $\varphi - a_N$ under $\bbP^\omega[ \, \cdot \,| \cW^+_N]$. 
\end{itemize}
\end{theorem}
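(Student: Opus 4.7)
The strategy is to adapt the Bolthausen--Deuschel--Zeitouni scheme~\cite{bolthausen1995entropic,deuschel1999entropic} to the random-conductance setting via the four-step structure signposted by the abstract. For the Cameron--Martin lower bound in (i), let $h^\omega_{V_N}(x) := \bbP^\omega_x[H_{V_N}<\infty]$ and set $f_N := \sqrt{4\overline g\log N}\,h^\omega_{V_N}$. The Cameron--Martin formula yields
\begin{equation*}
\bbP^\omega[\cW_N^+] \;\geq\; \exp\bigl(-\tfrac{1}{2}\cE^\omega(f_N,f_N)\bigr)\,\bbP^\omega[\varphi + f_N \geq 0 \text{ on } V_N],
\end{equation*}
with $\tfrac{1}{2}\cE^\omega(f_N,f_N) = 2\overline g\log N\cdot \capa^\omega(V_N)$; a quenched capacity-homogenization statement (itself a consequence of the quenched invariance principle for $\cL^\omega$ applied to strongly regular sets) delivers $N^{2-d}\capa^\omega(V_N)\to\capa^{\mathrm{hom}}(V)$ $\bbQ$-a.s., while the residual Gaussian factor is handled by Borell--TIS combined with the ergodic attainment of $\overline g=\esssup_{\omega} g^\omega(0,0)$ at sub-leading cost in the exponent.

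For the sharp profile of $m^\omega_N(x) := \bbE^\omega[\varphi_x\mid\cW_N^+]$, first use the DLR description of $\bbP^\omega$ together with the elementary fact that a real Gaussian conditioned to lie in $[0,\infty)$ has mean at least the unconditioned one to obtain $\cL^\omega m^\omega_N \leq 0$ on $V_N$, $\cL^\omega m^\omega_N = 0$ on $V_N^c$, $m^\omega_N \geq 0$, and $m^\omega_N(x) \to 0$ as $|x|\to\infty$. The sharp lower bound
\begin{equation*}
m^\omega_N(x) \;\geq\; \sqrt{4\overline g\log N}\;\mathscr{h}^{\mathrm{hom}}_V(x/N) - o(\sqrt{\log N})
\end{equation*}
is then extracted by (a) using superharmonicity on $V_N$ plus harmonicity on $V_N^c$ to force the functional shape of $m^\omega_N$ to match a rescaling of $h^\omega_{V_N}$, (b) pinning the leading constant at a site $x_0 \in V_N$ with $g^\omega(x_0,x_0)\approx\overline g$ (available by ergodicity), via conditioning on $\varphi_{x_0}=s$ and applying Bayes' theorem against the Cameron--Martin lower bound already obtained, and (c) invoking homogenization $h^\omega_{V_N}(\lfloor N\,\cdot\,\rfloor)\to\mathscr{h}^{\mathrm{hom}}_V$. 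Next, the Gaussian truncation formula expresses $-\cL^\omega m^\omega_N$ on $V_N$ as a Mills-ratio correction whose size is suppressed precisely where $m^\omega_N$ is already large; plugging in the lower bound shows the defect is $o(\sqrt{\log N})$ in a summed sense, and the Green-function representation of $m^\omega_N$ with this defect as source yields the matching upper bound, proving (ii). Feeding the now-sharp profile into the entropy inequality $-\log\bbP^\omega[\cW_N^+] \geq \tfrac{1}{2}\cE^\omega(m^\omega_N,m^\omega_N)$ upgrades the Cameron--Martin step to the matching upper bound in (i).

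For (iii), the event $\cW_N^+$ is increasing for the positively correlated Gaussian $\bbP^\omega$, so by Brascamp--Lieb the conditional covariances of $\varphi$ are dominated by $g^\omega$; combined with the uniform mean control from (ii), this yields tightness of $\varphi - a_N$, and finite-dimensional distributions are identified as $\bbP^\omega$ via a characteristic-function computation in which the Radon--Nikodym derivative of a Cameron--Martin tilt by a compactly supported test function is controlled by the sharp asymptotics of (i)--(ii). The main obstacle is the sharp lower bound in the middle step: without translation invariance, the constant $\overline g$ is attained only on a sparse random subset of sites, forcing a delicate interplay between superharmonicity, the quenched invariance principle, and ergodic control of $g^\omega(x,x)$ near $\overline g$, all while keeping the various boundary-layer errors strictly smaller than $\sqrt{\log N}$.
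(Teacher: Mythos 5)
Your plan contains one genuinely attractive idea — deriving the upper bound of (i) from (ii) via the barycenter/relative-entropy inequality $-\log\bbP^\omega[\cW_N^+]\geq \tfrac12\cE^\omega(m^\omega_N,m^\omega_N)$ with $m^\omega_N=\bbE^\omega[\varphi_\cdot\mid\cW_N^+]$, which is correct and would indeed replace the paper's direct coarse-graining proof of Proposition~\ref{prop:UpperBoundHW} \emph{if} the sharp lower bound in (ii) were available first. But that is exactly where the plan breaks. Your route to $m^\omega_N\gtrsim\sqrt{4\overline g\log N}\,h^\omega_{V_N}$ — ``superharmonicity on $V_N$ plus harmonicity outside forces the functional shape'' and ``pin the constant at a high-variance site by Bayes against the Cameron--Martin lower bound'' — does not work. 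Superharmonicity only converts lower bounds on boundary averages into pointwise lower bounds; it does not constrain the profile of $m^\omega_N$ inside $V_N$ to be close to $\sqrt{4\overline g\log N}\,h^\omega_{V_N}$ (a superharmonic function can be peaked at few sites and small elsewhere). And a Bayes argument against an LDP at speed $N^{d-2}\log N$ cannot pin an $O(\sqrt{\log N})$ height at a single site: perturbing the conditional height at one site changes probabilities by polynomial factors in $N$, which are invisible at that exponential scale; moreover the Cameron--Martin lower bound only shows that a push to height $\sqrt{4\overline g\log N}$ \emph{suffices}, never that the conditioned field actually rises that high. In the paper the lower bound of (ii) (Propositions~\ref{lem:DecayScalProd} and~\ref{prop:LBCF}) is obtained by reusing the full hard-direction machinery: good boxes at scale $N^{2/d}$ where the local-field minimum concentrates at $-\sqrt{4\overline g\log N}$ (this is where the i.i.d./mixing hypothesis and the attainment of $\overline g$ enter, Proposition~\ref{prop:PositivePortionOfHighVariance} and~\ref{prop:MaxLocalField}), the Borell--TIS-based bound over all coarse grainings (Corollary~\ref{cor:UpperBoundProbZf}) tilted by the boundary average, and the solidification estimate (Corollary~\ref{cor:SolidificationCapacity}); only then is superharmonicity invoked. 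Since your upper bound of (i), your Mills-ratio/defect upper bound in (ii), and all of (iii) are downstream of this lower bound, the proposal is circular/incomplete at its core: no ingredient in it identifies $\sqrt{4\overline g\log N}$ as a \emph{lower} bound for the repulsion height.

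Two further steps would also fail as written. In the lower bound of (i), the displayed ``Cameron--Martin'' inequality is not an identity and is false in general (there is a cross term $\bbE^\omega[\cE^\omega(f_N,\varphi)\mid\cdot\,]$; by FKG the naive inequality in fact goes the other way, as a one-dimensional small-shift example shows), and Borell--TIS cannot produce the required \emph{lower} bound on $\bbP^\omega[\varphi\geq-\alpha_N \text{ on } V_N]$, since it only bounds deviation probabilities from above; the paper repairs both points with the relative-entropy inequalities plus FKG and per-site Gaussian tails in Proposition~\ref{prop:LowerBoundHW}, and you would need to do the same. Likewise, in (ii) the Green-function representation with the defect as source still leaves the boundary average $\langle\eta^\omega_{x,r},m^\omega_N\rangle$ to be bounded from above (the paper does this via the FKG comparison with a constant shift and an entropy inequality in Proposition~\ref{prop:UBCF}), and in (iii) ``controlled by the sharp asymptotics of (i)--(ii)'' is too coarse: to replace the site-dependent recentering by $a_N$ and to identify the limit one needs the $O(N^{-2+\delta})$ harmonicity-defect bound and the H\"older regularity of $m^\omega_N$ (Lemmas~\ref{lem:ExtensionControlLu} and~\ref{lem:RegularityCF}), not merely the $o(\sqrt{\log N})$ statement of (ii).
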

Theorem~\ref{thm:MainTheorem}, i), gives the quenched large deviation asymptotics for the hard wall event, which decays exponentially fast with speed $N^{d-2} \log N$. The appearance of the constant~$\overline{g}$ in the rate becomes natural when looking at the first-order behavior of the maximum of the field over $V_N$. Indeed, one has that $\bbQ$-a.s.\
\begin{equation}\label{eq:MaxGFFVN}
    \lim_{N \to \infty} \frac{\bbE^\omega[\max_{x \in V_N} \varphi_x]}{\sqrt{2d \,\overline{g} \log N}} = 1 \,,
\end{equation}
see Remark~\ref{rem:MaxGFFErgodic} for the details. From the proof of~\eqref{eq:MaxGFFVN}, which is an adaptation of the arguments in~\cite{chiariniCiprianiHazra2016extremes}, it is clear that the maximum is attained at points where the variance of the field is close to its essential supremum $ \overline{g}$, since by ergodicity such points are present in a positive proportion in $V_N$, $\bbQ$-a.s.\ In fact,~\eqref{eq:MaxGFFVN} holds under the sole assumption that $\bbQ$ is stationary and ergodic. More refined convergence results on the recentered maximum in $d\geq 3$ like those in~\cite{chiariniCiprianiHazra2015note,chiariniCiprianiHazra2016extremes} are not known in this generality, and we leave them for future work.

In Theorem~\ref{thm:MainTheorem}, ii), one might wonder why the repulsion  in $V_N$ is at height $\sqrt{4 \overline{g} \log N}$ rather than $\sqrt{2 d\, \overline{g} \log N}$. To understand this, we can think of decomposing the field into a ``local'' part which lives at scale $N^{2/d}$, and carries the short-range correlations, and a ``undertow'' part, which carries the long-range correlations. For the Gaussian free field to stay positive over $V_N$, it is sufficient that enough of the local fields at scale $N^{2/d}$ are pushed away from zero by $\sqrt{4 \overline{g} \log N}$, which is the same level as their expected maximum, then the ``undertow'' part which carries the long range correlations ensures that the whole field remains positive on $V_N$. Note that the constant $\overline{g}$ is quite sensitive to the law $\bbQ$ of the conductances, hence the microscopic effect of the conductances is visible in the repulsion height. In particular, in the special case of i.i.d.\ conductances with $\essinf \omega_e = \lambda$ under $\bbQ$, we have $\overline{g} = g(0,0)/\lambda$. In particular, the field gets pushed up to a height which is strictly larger than in the homogeneous case with constant conductances and with the same scaling limit, see Remark~\ref{rem:Discussion}, 2). This shows that the impurities in the crystal have a non-trivial effect on the entropic repulsion phenomenon as well as the maximum of the unconditioned field, which is somewhat different from what happens in the critical dimension $d=2$, see Remark~\ref{rem:Discussion}, 4). Furthermore, the form of $\overline{g} = g(0,0)/\lambda$  in the i.i.d.\ setting, suggests that in the case of degenerate conductances, the repulsion height is possibly much higher. In fact, going in this direction, for the Gaussian free field on the supercritical percolation cluster, in $d\geq 3$, due to the presence of pipes, we expect that the field under the hard wall event is repelled at height of order $\log N$, see discussion in Remark~\ref{rem:Discussion}, 3), so that the speed of convergence in Theorem~\ref{thm:MainTheorem}, i), is of order $N^{d-2}(\log N)^2$.

Finally, we remark that the quenched weak convergence in Theorem~\ref{thm:MainTheorem}, iii), has a local nature, and establishes that locally the field conditioned on the hard wall event behaves like the unconditioned field when recentered by the diverging amount $a_N$. This is in line with the homogeneous case treated in~\cite{deuschel1999entropic}, but the proof is more involved due to the lack of translation invariance. We can also obtain some more global information, see Remark~\ref{rem:corollaryglobal}.
\vspace{\baselineskip}

We briefly comment on the proofs of our results and give some intuition as well
as some further directions. The most challenging part of this work concerns the upper bound in Theorem~\ref{thm:MainTheorem}, i) and in establishing the convergence in ii).

For the proof of the upper bound in Theorem~\ref{thm:MainTheorem}, i), we follow the coarse graining strategy from~\cite{sznitzman2015disconnection} which was also used in~\cite{chiarini2020entropic, chiarini2021disconnection, nitzschner2018entropic} with adapted notions of good and bad boxes. We found this strategy better suited to handle the lack of translation invariance in the model, as compared to~\cite{bolthausen1995entropic}. We consider a covering of $V_N$ with well separated boxes of side-length $N^{2/d}$, and for each box we split the field into a local part and its harmonic average. We declare boxes good if the minimum of the local field concentrates around its expected value, which is $-\sqrt{4 \overline{g} \log N}$. Thanks to the mixing property of $\bbQ$ and Bernoulli concentration, most of the boxes are good, and under the hard wall event, in each good box there will be sites where the harmonic averages of the field are at height $\sqrt{4 \overline{g} \log N}$. Incidentally, this is the only place where quantitative ergodicity of the environment is used. We then make use of certain Gaussian bounds, which will bring into play the capacity of the union of the good boxes. These form a porous interface that is hard for the random walk to avoid. A crucial ingredient is therefore a quenched \emph{solidification estimate} for such porous interfaces, which allows us to replace their discrete capacity with the discrete capacity of $V_N$. This is close to the solidification estimates developed in \cite{nitzschner2018entropic, nitzschner2017solidification}, even though it is less involved in our case. The proof is completed with the homogenization of the discrete capacity to the Brownian one.

For what concerns Theorem~\ref{thm:MainTheorem}, ii), the proof is more involved. In~\cite{bolthausen1995entropic}, the authors rely on the translation invariance in the model, together with FKG inequality to replace the conditioned expectation $u^\omega_N(\cdot) = \bbE^{\omega}[\varphi_\cdot|\cW_N^+]$ with a an average over a macroscopic box. To overcome the lack of translation invariance, we exploit the superharmonicity of $u^\omega_N$ to obtain a lower bound as a weighted average over the \emph{boundary} of a macroscopic box. We then show that weighted averages of the field over the boundary of macroscopic boxes contained in $V_N$ concentrate around $\sqrt{4 \overline{g} \log N}$ exponentially fast, which allows us to conclude the lower bound. The proof of this fact is reminiscent of the arguments in~\cite{nitzschner2018entropic,chiarini2019entropic,chiarini2021disconnection} where concentration around macroscopic ball averages under disconnection events was shown. However, our setting is slightly more delicate as the concentration is for averages over a lower dimensional set, namely the boundary of a \emph{macroscopic} box. Strikingly, once we have a sharp lower bound for $u^\omega_N$ in $V_N$, we can use it to get an estimate on the harmonicity defect of $u^\omega_N$, which is the key step in obtaining a matching upper bound, by estimating $u^\omega_N$ from above rewriting it as a weighted average on the boundary of a \emph{mesoscopic} ball, which turns out to be enough with the help of a entropy bound. We hope that this new approach can be useful to obtain pathwise results in other contexts, such as when dealing with disconnection events for level-set percolation as in~\cite{chiarini2021disconnection}.

It turns out that the Gaussian free field is closely related to random interlacements, via certain isomorphism theorems, see \cite{sznitman2012isomorphism}. It would be interesting to investigate atypical events of hard wall type for the occupation time field of random interlacement in random environment, which then should be helpful to obtain some fine information on the occupation time field of random walks in random environments. Hopefully, some ideas developed in this article, together with those in \cite{chiarini2020entropic,LiSznitman2014lower,Li2017lower,nitzschner2017solidification}, will be useful.
In a different direction, we remark that the Gaussian free field with random conductances also has implications for more general gradient fields with non-convex potentials, as studied in~\cite{biskup2007phase,biskupSpohn2011scaling}. For such models, the study of extremes and of the hard wall repulsion phenomenon is quite intriguing, and the results of this article may provide some insight in this direction as well.

\vspace{\baselineskip}

We now explain the organization of this article.
In Section~\ref{sec:NotationUsefulResults} we introduce some notation, and certain results about random walk in random environment, heat kernel bounds, potential theory, the Gaussian free field, FKG and relative entropy inequalities. We also present some key results from stochastic homogenization theory.
Section~\ref{sec:PrelimGaussianFunctional}, similarly to~\cite[Section 6]{chiarini2021disconnection}, is devoted to obtaining  certain quenched Gaussian bounds in Corollary \ref{cor:UpperBoundProbZf}. These estimates are crucially used both in Proposition~\ref{prop:UpperBoundHW} and Proposition~\ref{lem:DecayScalProd}.
Section~\ref{sec:ProbHW} is devoted to the proof of the quenched asymptotic large deviation result for the hard wall event as in Theorem~\ref{thm:MainTheorem}, i). This is attained by showing lower and upper bounds in Proposition~\ref{prop:LowerBoundHW} and Proposition~\ref{prop:UpperBoundHW}, respectively. In order to achieve the latter, we also rely on a quenched solidification estimate for sets with a special covering property, which is the content of Corollary~\ref{cor:SolidificationCapacity}.
Section~\ref{sec:ExpConditionedField} is aimed at the study of the expectation of the field conditionally on the hard wall event. The goal is to obtain Theorem~\ref{thm:MainTheorem}, ii). The proof consists in showing the result for the discrete equilibrium potential first and then replacing it with the Brownian one. The lower bound is contained in Proposition~\ref{prop:LBCF}, and  is pivotal for obtaining in Lemma~\ref{lem:defectharmonicity} an estimate for the harmonicity defect of the conditioned expectation. We then exploit such estimate for the proof of the upper bound in Proposition~\ref{prop:UBCF}.
In Section~\ref{sec:ProofEntropicRepulsion} we derive the limiting pathwise behavior for the conditioned field as in Theorem~\ref{thm:MainTheorem}, iii). This is done in two steps: In the first, we prove that the result holds when we recenter the field by its expectation $\bbE^\omega[\varphi_x |\cW^+_N]$, $x \in \bbZ^d$. In the second, we show that the difference between such recentering and the one constantly equal to $\bbE^\omega[\varphi_0|\cW^+_N]$ is $o(1)$ as $N \to \infty$. This is achieved by showing a H\"older regularity estimate for the conditioned field in Lemma~\ref{lem:RegularityCF}. We conclude Section~\ref{sec:ProofEntropicRepulsion} with an extensive discussion. Finally, in Appendix~\ref{secapp:ProofPositivePortionOfHighVariance} we get a generalization of the results assuming just sufficient mixing conditions on the conductances, while Appendix~\ref{secapp:ProofEstimatesExpectationCF} is devoted to the proof of some technical results on the behavior of the field of weighted averages of neighboring entries of the Gaussian free field.
\vspace{\baselineskip}

We use the following convention for constants. $c$, $c'$, $C$, $C'$, $\ldots$ denote positive constants whose value may change from place to place. Numbered constants $c_1$, $c_2$, $C_1$, $C_2$, $\ldots$ remain fixed to the value with which they are defined the first time they appear in the text. All constants may implicitly depend on the dimension and on the parameters $0<\lambda\leq\Lambda$. When we want to underline that a constant $c$ depends on other quantities, we explicitly write them in the notation.

\section{Notation and useful results}
\label{sec:NotationUsefulResults}

This section establishes further notation and classical results on random walks among inhomogeneous conductances, potential theory, and the Gaussian free field with bond disorder. We also recall standard inequalities, such as FKG, Brascamp-Lieb and entropy inequalities, that are required for our analysis. In this article we always assume that $d\geq 3$.
\vspace{\baselineskip}

To begin, we introduce some basic notation. We denote by $\bbN  = \{1,2,3,...\}$ the set of natural numbers. Given two real numbers $s, t$, we write $s \wedge t$ and $s \vee t$ for the minimum and maximum of $s$ and $t$, respectively, and, when $s\geq 0$, the integer part of $s$ is represented by $\lfloor s \rfloor$. For $s=(s_1,\dots,s_d) \in \bbR^d$ we set $\lfloor s \rfloor = (\lfloor s_1 \rfloor,\dots, \lfloor s_d \rfloor)$. We denote the Euclidean and $\ell^\infty$-norms on $\bbR^d$ by $| \cdot |$  and $| \cdot |_\infty$, respectively. We write $B(x,r) = \{y \in \bbZ^d \, : \, |x-y|_\infty \leq r\} \subseteq \bbZ^d$ for the (closed) $\ell^\infty$-ball of radius $r \geq 0$ centered at $x\in\bbZ^d$. We call $x,y \in \bbZ^d$ neighbors, and write $x \sim y$, when $|x - y| = 1$. For a subset $K \subseteq \bbZ^d$, $|K|$ denotes the cardinality of $K$, and we write $K \subset \subset \bbZ^d$ if $|K| < \infty$. We let $K^c = \bbZ^d \setminus K$ be the complement of $K$, $\partial K = \{ y \in \bbZ^d \setminus K \, : \exists \, x \in K \text{ such that } y \sim x \}$ the external boundary of $K$, and $\partial_{\text{in}}K = \{y \in K \, : \exists \, x \in \bbZ^d \setminus K \text{ such that } y \sim x \}$ the internal boundary of $K$. For $K, U \subseteq \bbZ^d$ we write $d_\infty(K,U)$ for the $\ell^\infty$-distance between $K$ and $U$. Given a set $V \subseteq \bbR^d$, we let $V_N = (NV) \cap \bbZ^d$ be the discrete blow-up of $V$, and, for $\varepsilon \in (0,1)$, we define the $\varepsilon$-bulk of $V$ as $V^\varepsilon = \{ x \in V : |x-y| > \varepsilon  , \, y \in V^c \}$, and we set $V^\varepsilon_N = (N V^\varepsilon) \cap \bbZ^d$. For $\varphi, \psi \in \bbR^{\bbZ^d}$ we write $\varphi \leq \psi$ to say that $\varphi_x \leq \psi_x$ for every $x \in \bbZ^d$. We call a function $f : \bbR^{\bbZ^d} \to \bbR$ increasing if $f(\varphi) \leq f(\psi)$ any time $\varphi \leq \psi$, and decreasing if $-f$ is increasing. We also say that $f$ is supported on $K \subseteq \bbZ^d$ if $f(\varphi) = f(\psi)$ whenever $(\varphi_x)_{x \in K} = (\psi_x)_{x \in K}$. For a measure $\mu$ on $\bbZ^d$ and functions $u,\,v:\bbZ^d\to\bbR$ belonging to $\ell^2(\mu)$ we also write $\langle u,v \rangle_{\mu}=\sum_{x\in \bbZ^d} \mu_x u(x)v(x)$, and use the shorthand $\langle  u,v\rangle$ when $\mu$ is constantly equal to one. We use the conventions $\sup_{x \in \varnothing}u(x)=-\infty$ and $\inf_{x \in \varnothing}u(x)=\infty$.

\vspace{\baselineskip}

We consider the integer $d$-dimensional lattice $\bbZ^d$ with set of undirected edges $\bbE_d = \{\{x,y\}: \,x,y\in \bbZ^d,\, x\sim y \}$. For fixed $0<\lambda\leq\Lambda<\infty$, we define the collection of configurations of weights on the edges of the graph $(\bbZ^d, \bbE_d)$ as $\Omega_{\lambda,\Lambda} = [\lambda,\Lambda]^{\bbE_d}$. For any $\omega\in \Omega_{\lambda,\Lambda}$, the conductance on the edge $\{x,y\} \in \bbE_d$ is denoted by $\omega_{x,y} = \omega_{\{x,y\}} = \omega_{y,x}$.
Given $\omega\in\Omega_{\lambda,\Lambda}$, we define
\begin{equation}
\label{eq:reversibMeasure}
\mu^\omega_x = \sum_{y \, : \, y \sim x} \omega_{x,y}\,, \qquad x \in \bbZ^d\,.
\end{equation}
Given a field of conductances $\omega\in \Omega_{\lambda,\Lambda}$ one can define the operator $\cL^\omega$ acting on functions $f : \bbZ^d \to \bbR$ as
\begin{equation}
\label{eq:GeneratorCSRW}
    \cL^\omega f(x) =  \sum_{y \, : \, y \sim x} \frac{\omega_{x,y}}{\mu^\omega_x}(f(y) - f(x))\,,\qquad x\in \bbZ^d\,,
\end{equation}
which is the generator of a (constant-speed) random walk on $(\bbZ^d, \bbE_d, \omega)$. We call $f : \bbZ^d \to \bbR$ a ($\omega$-)harmonic function in a subset $K \subseteq \bbZ^d$ if $\cL^\omega f(x) = 0$ for every $x \in K$. Further, we say that $f$ is ($\omega$-)superharmonic in $K$ if $\cL^\omega f(x) \leq 0$ for every $x \in K$ and that $f$ is ($\omega$-)subharmonic in $K$ if $\cL^\omega f(x) \geq 0$ for every $x \in K$.
\vspace{\baselineskip}

We now introduce the continuous-time, constant-speed random walk on $(\bbZ^d, \bbE_d, \omega)$ and some potential theoretic notions associated with it. For any given $\omega\in \Omega_{\lambda,\Lambda}$ we denote by $P_x^\omega$ the law of the continuous-time Markov chain $(X_t)_{t \geq 0}$ starting from $x \in \bbZ^d$ with generator $\cL^\omega$, that is, the process jumps from $x$ to $y$, $x\sim y$, at rate $\omega_{x,y}/\mu^\omega_x$. The walk is reversible with respect to the measure $\mu^\omega$ defined in~\eqref{eq:reversibMeasure}.
We denote by $E_x^\omega$ the expectation associated with $P_x^\omega$.

For $U \subseteq \bbZ^d$, we define the stopping times (with respect to the canonical filtration generated by $(X_t)_{t\geq 0}$) $H_U = \inf\{ t \geq 0 \, : \, X_t \in U\}$, $\widetilde{H}_U = \inf\{ t > 0 \, : \, X_t \neq X_0 \, , \, X_t \in U\}$, $T_U = \inf \{ t \geq 0 \, : \, X_t \notin U \}$, which are the entrance, hitting and exit times of $U$, respectively. We define $q^\omega_t$ and  $q^{\omega}_{t,U}$ to be the heat kernel and the killed heat kernel upon leaving $U$, respectively, by setting
\begin{align}
q^\omega_t(x,y) & = P^\omega_x[X_t = y] / \mu^\omega_y\,,  &&t \geq 0\,,\, x,y \in \bbZ^d\,, \text{ and} \\
\label{eq:KilledHeatKernel}
q^{\omega}_{t,U}(x,y) & = P^\omega_x[X_t = y, T_U > t] / \mu^\omega_y\,,  &&t \geq 0\,,\, x,y \in \bbZ^d\,.
\end{align}
We let
\begin{align}
\label{eq:GreenFunction}
g^\omega(x,y) & = \int_0^\infty q^\omega_t(x,y)\, \De t = E^\omega_x\Big[ \int_0^\infty \mathbbm{1}_{\{X_t = y \}}\, \De t \Big] / \mu^\omega_y\,, &&x,y \in \bbZ^d\,, \text{ and} \\
\label{eq:KilledGreenFunction}
g_U^\omega(x,y) & = \int_0^\infty q^{\omega}_{t,U}(x,y)\, \De t = E^\omega_x\Big[ \int_0^{T_U} \mathbbm{1}_{\{X_t = y \}}\, \De t \Big] / \mu^\omega_y\,, &&x,y \in \bbZ^d
\end{align}
be the Green function of the random walk and the Green function of the random walk killed upon leaving $U$, respectively. It is known that, under the assumption of elliptic conductances, $q^\omega_t$, $q^\omega_{t,U}$, $g^\omega$ and $g^\omega_U$ are finite and symmetric (for every $t \geq 0$), and $q^\omega_{t,U}(x,y)=g^\omega_U(x,y)=0$ when $x \in U^c$ or $y \in U^c$.

By the strong Markov property, it also holds that
\begin{equation}
\label{eq:DecompositionGreenKilledGreen}
g^\omega(x,y) = g^\omega_U(x,y) + E^\omega_x[T_U < \infty, g^\omega(X_{T_U},y)]\,, \qquad x,y \in \bbZ^d
\end{equation}
(see Proposition 1.6 in~\cite{sznitman2012topics}), and for $U = \{x\}$ by symmetry we get that 
\begin{equation}
\label{eq:harmonicityOfGreen}
g^\omega(y,\cdot) \text{ is $\omega$-harmonic in } \bbZ^d \setminus \{y\}\,, \qquad y \in \bbZ^d\,.
\end{equation}

We also have the following bounds on the Green function:
\begin{equation}
\label{eq:QuenchedGFEstimate}
\frac{c_1}{|x-y|^{d-2} \vee 1} \leq g^\omega(x,y) \leq \frac{c_2}{|x - y|^{d-2} \vee 1}\,, \qquad x, y \in \bbZ^d\,,\, \omega \in \Omega_{\lambda,\Lambda} \,,
\end{equation}
where all the constants are independent of $\omega\in\Omega_{\lambda,\Lambda}$ (see Theorem 6.28 of~\cite{barlow2017random} for uniform bounds on the heat kernel, that imply the bounds on $g^\omega$).

Moreover, for the killed heat kernel we have that for $\vartheta \in (0,1)$ there exists $R_0(\vartheta) \in \bbN$ such that for every $R \geq R_0(\vartheta)$ it holds that for all $\omega \in \Omega_{\lambda,\Lambda}$ and $x_0 \in \bbZ^d$
\begin{equation}
\label{eq:HK_bound}
q_{t,B(x_0,R)}^{\omega}(x_1,x_2) \geq c_3(\vartheta) t^{-d/2}\,,\qquad x_1,x_2 \in B(x_0, (1-\vartheta)R) \text{ and } c_4(\vartheta)R^2 \leq t \leq R^2\,,
\end{equation} 
with $c_4(\vartheta) < 1$ (see Theorem 5.26 in~\cite{barlow2017random}). 

For later use, we state the following H\"older-regularity property for functions $u$ which are $\omega$-harmonic in $B(x_0,2r)$ for some $x_0 \in \bbZ^d$ and $r>0$, namely that there exists two positive constants $c_5, \tau $, only depending on $\lambda$ and $\Lambda$, such that for every $x,y \in B(x_0,r)$
 \begin{equation}
\label{eq:HoelderRegProperty}
 |u(x) - u(y)| \leq c_5 \left( \frac{|x-y|_\infty}{r} \right)^\tau \sup_{z \in B(x_0,2r)}|u(z)|\, ,
\end{equation}
see Proposition 6.2 in~\cite{Delmotte1997}.
\vspace{\baselineskip}

We now present some potential theory associated with the random walk.
For $\omega \in \Omega_{\lambda,\Lambda}$, we introduce the operator $G^\omega$ acting on $f :\bbZ^d \to \bbR$ as
\begin{equation}
G^\omega f (x) = \sum_{y \in \bbZ^d} g^\omega(x,y) f(y)\mu^\omega_y\,, \qquad x \in \bbZ^d
\end{equation}
when the series on the right converges absolutely. For finite measures $\nu$ on $\bbZ^d$ we also write
\begin{equation}
\nu G^\omega (x) = \sum_{y \in \bbZ^d} \nu_yg^\omega(x,y) \,, \qquad x \in \bbZ^d\,.
\end{equation}
We also define the Dirichlet form associated with $-\cL^\omega$ as
\begin{equation}\label{eq:DefinitionDirichletForm}
\cE^\omega(f,f) = \frac{1}{2} \sum_{x\in\bbZ^d} \sum_{y\,:\,x \sim y} \omega_{x,y} \big( f(y) - f(x) \big)^2
\end{equation}
for $f \in \bbR^{\bbZ^d}$, and by polarization (and symmetry of $\omega_{x,y}$) we define it for $f,g \in \bbR^{\bbZ^d}$ as
\begin{equation}
\cE^\omega(f,g) = \frac{1}{2} \sum_{x\in\bbZ^d} \sum_{y\,:\,x \sim y} \omega_{x,y} \big( f(y) - f(x) \big)\big( g(y) - g(x) \big)
\end{equation}
when the resulting series converges absolutely, in which case we have $\cE^\omega(f,g) = \langle -\cL^\omega f,g \rangle_{\mu^\omega}$. We use the shorthand $\cE^\omega(f) = \cE^\omega(f,f)$.
The following relation between $\cE^\omega$ and $G^\omega$ holds:
\begin{equation}\label{eq:RelationDirichletFormGreenFunction}
\cE^\omega (G^\omega f , g) = \langle f , g \rangle_{\mu^\omega}\,, \qquad f,g : \bbZ^d\to \bbR\,,
\end{equation}
whenever the series converge absolutely
(see Proposition 1.3 of~\cite{sznitman2012topics}), hence we can also write $G^\omega = (-\cL^\omega)^{-1}$ as operators on $\ell^2(\mu^\omega)$. 

For $A \subset \subset \bbZ^d$ we define the equilibrium measure of $A$ as
\begin{equation}
\label{eq:DefEqMeasure}
e^\omega_A(x) = P_x^\omega[\widetilde{H}_A = \infty]\mu^\omega_x \mathbbm{1}_A(x)\,, \qquad x \in \bbZ^d\,,
\end{equation}
and the capacity of $A$ (associated with $X$) as its finite total mass,
\begin{equation}
\capa^\omega(A) = \sum_{x \in A}e_A^\omega(x)\,. 
\end{equation}
For a closed $\ell^\infty$-ball $B(x,L)$, where $x \in \bbZ^d$ and $L \geq 1$, the capacity satisfies the classical bounds
\begin{equation}
\label{eq:QuenchedBoxCapacityEstimate}
cL^{d-2} \leq \capa^\omega(B(x,L)) \leq C L^{d-2}\,,
\end{equation}
where the constants are independent of $\omega \in \Omega_{\lambda,\Lambda}$ (see Lemma 7.21 (a) in~\cite{barlow2017random}).
We introduce the equilibrium potential of $A$,
\begin{equation}
\label{eq:EquilibriumPotential}
h^\omega_A(x) = P^\omega_x[H_A < \infty]\,, \qquad x \in \bbZ^d\,,
\end{equation}
and its relation with the equilibrium measure $e^\omega_A$, 
\begin{equation}
\label{eq:EqPotentialAndMeasure}
h^\omega_A(x) =  e^\omega_{A}G^\omega(x)\,, \qquad x \in \bbZ^d
\end{equation}
(see Corollary 7.4 of~\cite{barlow2017random}). We also note that $\capa^\omega(A)=\cE^\omega(h^\omega_A)$.

We further define the equilibrium potential and the capacity of $A \subseteq V \subset \subset \bbZ^d$ for the random walk killed upon leaving $V$, as
\begin{equation}\label{eq:EquilibriumPotentialKilledAndCapa}
    h^\omega_{A,V}(x)  = P^\omega_x [H_A < T_V]\,, \quad x \in \bbZ^d\,, \qquad \text{and} \qquad \capa_V^\omega(A)  = \cE^\omega (h_{A,V}^\omega),
    \end{equation}
respectively.
\vspace{\baselineskip}

We now consider the (discrete) Gaussian free field on the weighted graph $(\bbZ^d, \bbE_d, \omega)$ for a fixed $\omega \in \Omega_{\lambda,\Lambda}$, which we introduced in~\eqref{eq:DefGFF}, and we denote by $\bbE^\omega$ the expectation associated with $\bbP^\omega$.
For $A \subseteq \bbZ^d$, we write $\cF_A=\sigma(\varphi_x \, : \, x \in A)$ for the $\sigma$-algebra generated by the field on $A$.
 The law of the Gaussian free field $\bbP^\omega$ is uniquely characterized as the centered Gaussian field such that for every $x \in \bbZ^d$
\begin{equation}\label{eq:DLR_GFF}
\bbP^\omega\left(\De\phi|\mathcal{F}_{\{x\}^c}\right)(\varphi)\propto\exp\bigg\{-\frac{1}{2}\sum_{y\,:\,y\sim x}\omega_{x,y}\left(\phi_x-\varphi_y\right)^2\bigg\}\,\De\phi_x \prod_{y\neq x} \delta_{\varphi_y}(\De \phi_y)\,,\qquad\bbP^\omega(\De\varphi)\text{-a.s.}
\end{equation}
(see~\cite[Section 13.1]{georgii1988gibbs}). 
We now state a classical spatial Markov property for the Gaussian free field $\varphi$. For $U \subseteq \bbZ^d$ and $\omega \in \Omega_{\lambda,\Lambda}$, we introduce the ($\omega$-)harmonic average $\xi^{\omega,U}$ of $\varphi$ in $U$ and the ($\omega$-)local field $\psi^{\omega,U}$ as
\begin{align}
\label{eq:HarmonicAverageDef}
\xi_x^{\omega,U}& = E^\omega_x[\varphi_{X_{T_U}}, T_U < \infty] = \sum_{y \in \bbZ^d} P^\omega_x[X_{T_U} = y, T_U < \infty]\varphi_y\,, \qquad x \in \bbZ^d\,; \\
\label{eq:LocalFieldDef}
\psi_x^{\omega,U} &= \varphi_x - \xi_x^{\omega,U}\,, \qquad x \in \bbZ^d\,.
\end{align}
By definition, $\varphi_x = \xi_x^{\omega,U} + \psi_x^{\omega,U}$ and for $x \in U^c$ we have $\psi^{\omega,U}_x = 0$ and $\xi^{\omega,U}_x = \varphi_x$. The Markov property is the following:
\begin{equation}
\label{eq:DomainMP}
\begin{minipage}{0.75\linewidth}
    \begin{center}
    $(\psi^{\omega,U}_x)_{x \in \mathbb{Z}^d}$ is independent of $\cF_{U^c}$ (in particular of  $(\xi^{\omega,U}_x)_{x \in \mathbb{Z}^d}$),\\
    and is distributed as a centered Gaussian field with covariance $g^\omega_U(\cdot,\cdot)$\,,
    \end{center}
\end{minipage}
\end{equation}
where $g^\omega_U$ is the Green function of the random walk among conductances $\omega$ killed upon exiting $U$ as in~\eqref{eq:KilledGreenFunction} (see Proposition 2.3 of~\cite{sznitman2012topics}).
\vspace{\baselineskip}

Given $f:\bbZ^d\to \bbR$, we define the translation operator $T_f:\bbR^{\bbZ^d} \to \bbR^{\bbZ^d}$ as
\begin{equation}
\label{eq:TranslationOperator}
T_f \varphi = \varphi + f\,, \qquad \varphi \in \bbR^{\bbZ^d}\,.
\end{equation}
For a probability measure $\nu$ on $\bbR^{\bbZ^d}$ and a map $T$ from $\bbR^{\bbZ^d}$ to $\bbR^{\bbZ^d}$, we write $\nu T^{-1}(\cdot)$ for the push-forward measure $\nu(T^{-1}\cdot).$
If $\varphi$ is a Gaussian free field under $\bbP^\omega$ and $f$ is finitely supported, then the law $\bbP^\omega T^{-1}_f$ can be characterized by its Radon-Nikodym derivative with respect to $\bbP^\omega$ as
\begin{equation}
\label{eq:RadonNikodymShift}
\frac{\De(\bbP^\omega T^{-1}_f)}{\De\bbP^\omega}(\varphi) = \exp \left\{ \cE^\omega(f,\varphi) - \frac{1}{2} \cE^\omega (f,f) \right\}\,, \qquad \varphi \in \bbR^{\bbZ^d}\,.
\end{equation}
This is equivalent to say that
\begin{equation}
    \begin{minipage}{0.8\linewidth}\begin{center}
        $\varphi$ under $\bbP^\omega T^{-1}_f$ has the same law as $\varphi + f$ under $\bbP^\omega$\,,\end{center}
    \end{minipage}
\end{equation}
see (2.4) of~\cite{sznitzman2015disconnection}.
\vspace{\baselineskip}

We now state some key inequalities that will be useful in the sequel. For $\bbP^\omega$ we will make use of three inequalities. The first one is the FKG inequality for positively correlated Gaussian fields which states that (see e.g.~\cite{Pitt82})
\begin{equation}
    \label{eq:FKGproduct}
     \bbE^\omega[ u(\varphi) v(\varphi) ] \geq \bbE^\omega[ u(\varphi)] \bbE^\omega[ v(\varphi) ]\,, \qquad u,v\in L^2(\bbR^{\bbZ^d},\bbP^\omega), \text{ increasing}
\end{equation}
and of course the same inequality still holds true if $u$ and $v$ are both decreasing, while it is reversed if one is increasing and the other is decreasing.

For any subset $U \subseteq \bbZ^d$ we write 
\begin{equation}
    \bbP^{\omega,+}_U [\,\cdot\,] = \bbP^\omega [\, \cdot\, | \varphi_x \geq 0, \, \forall x \in U ]
\end{equation}
for the measure of the Gaussian free field $\varphi$ conditioned on being non-negative on $U$. 
Given two subsets $A, A' \subset \subset \bbZ^d$ with $A' \subseteq A$, the second inequality states that (see e.g. Proposition 5.22 and Definition 5.21 of~\cite{biskup2017extrema})
\begin{equation}\label{eq:FKG2}
    \bbE^{\omega,+}_A[ u(\varphi) ] \geq \bbE^{\omega,+}_{A'}[ u(\varphi) ]\,, \qquad u: \bbR^{\bbZ^d} \to \bbR \text{ increasing}\, ,
\end{equation}
where $\bbE^{\omega,+}_A$ and $\bbE^{\omega,+}_{A'}$ are the expectation associated with $\bbP^{\omega,+}_A$ and $\bbP^{\omega,+}_{A'}$, respectively.

The third inequality states that for any constant field with value $a \geq 0$
\begin{equation}\label{eq:FKG3}
    \bbE^{\omega,+}_A[ u(\varphi) ] \leq \bbE^{\omega,a}[ u(\varphi) | \varphi_x \geq 0, \, \forall x \in A]\,, \qquad u: \bbR^{\bbZ^d} \to \bbR \text{ increasing}\,,
\end{equation}
where $\bbE^{\omega,a}$ is the expectation associated with the law $\bbP^\omega T_a^{-1}$ of the field shifted by height $a$ (see e.g. (B.13) of~\cite{giacomin2001aspects}).

Another key tool we use is the Brascamp-Lieb inequality, which is a concentration inequality. We use the following version for the Gaussian free field $\varphi$ (see~\cite{deuschel1999entropic}, and~\cite{BRASCAMP1976366}): 
\begin{equation}\label{eq:BrascampLieb}
    \bbE^{\omega,+}_A \Big[ u ( \langle f,\varphi \rangle - \bbE^{\omega,+}_A [ \langle f, \varphi \rangle] ) \Big] \leq \bbE^{\omega}[ u( \langle f, \varphi \rangle ) ]
\end{equation}
for every $f :\bbZ^d\to \bbR$ compactly supported, and where $u : \bbR \to \bbR$ is either $u(r) =  |r|^\alpha$ for some $\alpha \geq 1$, or $u(r) = \mathrm{e}^r$.

The last inequalities we recall are relative entropy inequalities to compare the measure of a set under two different probabilities. Let $\bbP$ and $\widetilde{\bbP}$ be two probability measures on the same measurable space $(E,\cE)$ such that $\widetilde{\bbP}$ is absolutely continuous with respect to $\bbP$. The relative entropy of $\widetilde{\bbP}$ with respect to $\bbP$ is defined as
\begin{equation}
\label{eq:DefRelEntropy}
H(\widetilde{\bbP} | \bbP) = \widetilde{\bbE} \Big[ \log \frac{\De \widetilde{\bbP}}{\De \bbP} \Big] = \bbE\Big[\frac{\De \widetilde{\bbP}}{\De \bbP}\log \frac{\De \widetilde{\bbP}}{\De \bbP} \Big] \in [0,\infty]\,,
\end{equation}
where $\widetilde{\bbE}$ and $\bbE$ are the expectation with respect to $\widetilde{\bbP}$ and $\bbP$, respectively. For any $\widetilde{\bbP}$-integrable function $U: E \to \bbR$ with $\bbE[\mathrm{e}^{U}]<\infty$, the following inequality holds:
\begin{equation}\label{eq:RelEntropyInequal1}
\log \bbE[\mathrm{e}^{U}] + H(\widetilde{\bbP} | \bbP) \geq \widetilde{\bbE}[U]\, ,
\end{equation}
see Lemma 3.2.13 of~\cite{deuschel2001large}.
Moreover, for any event $F\in \cE$ with positive $\widetilde{\bbP}$-probability,
\begin{equation}\label{eq:RelEntropyInequal2}
    \log \frac{\bbP[F]}{\widetilde{\bbP}[F]} \geq -\frac{1}{\widetilde{\bbP}[F]} \left[ H(\widetilde{\bbP}|\bbP) + e^{-1} \right]\, ,
\end{equation}
see p. 76 of~\cite{deuschel2001large}.
\vspace{\baselineskip}

We now introduce the random environment and a homogenization result for the random walk. We define a group of environment shifts $\tau_x: \Omega_{\lambda,\Lambda} \to \Omega_{\lambda,\Lambda}$, $x\in \bbZ^d$, as
\begin{equation}
    ( \tau_x \, \omega )_{y,z} = \omega_{y+x, z+x}\,, \qquad y,z \in \bbZ^d\,, \omega \in \Omega_{\lambda,\Lambda}\,.
\end{equation}
We endow $\Omega_{\lambda,\Lambda}$ with the canonical $\sigma$-algebra of cylinders $\cG$ and consider on the resulting measurable space a probability measure $\bbQ$ (the environment measure) which we assume stationary and ergodic with respect to the group of shifts $(\tau_x)_{x \in \bbZ^d}$, namely that $\bbQ [\tau_x (A)] = \bbQ [A]$ for every $A \in \cG$ and $x \in \bbZ^d$, and that every measurable $f : \Omega_{\lambda,\Lambda} \to \bbR$ such that $f \circ \tau_x = f$, $\bbQ$-a.e.\ and for every $x \in \bbZ^d$, is $\bbQ$-a.e.\ constant. This is equivalent to say that all events $A\in\cG$ that are invariant under the shifts satisfy $\bbQ [A] \in \{0,1\}$.

It is know that when $\bbQ$ is a stationary and ergodic probability measure on $\Omega_{\lambda,\Lambda}$, for $\bbQ$-a.e.\ $\omega \in \Omega_{\lambda,\Lambda}$ the diffusively scaled random walk $(n^{-1} X_{tn^2})_{t \geq 0}$ under $P^\omega_0$ converges in law to a Brownian motion $Z=(Z_t)_{t \geq 0}$ which has a deterministic and non-degenerate covariance matrix $a^{\mathrm{hom}} \in \bbR^{d \times d}$. Denoting the law of $Z$ started at $x \in \bbR^d$ by $W_x$, we can define the Dirichlet form $\bfD: W^{1,2}(\bbR^d) \times W^{1,2}(\bbR^d) \to \bbR$ associated with $a^{\mathrm{hom}}$ (where $W^{1,2}(\bbR^d)$ is the Sobolev space of functions in $L^2(\bbR^d)$ that are weakly differentiable with weak derivatives in $L^2(\bbR^d)$) as
\begin{equation}
\bfD (f,f) = \frac{1}{2} \int_{\bbR^d} \nabla f (x) \cdot a^{\mathrm{hom}} \nabla f (x) \De x, \qquad f \in W^{1,2}(\bbR^d).
\end{equation}
Given $A \subseteq \bbR^d$ closed or open bounded, we define the harmonic potential of $A$ associated with $Z$ as
\begin{equation}
\mathscr{h}^{\mathrm{hom}}_A(x) = W_x [(Z_t)_{t \geq 0} \text{ hits } A]\,, \qquad x \in \bbR^d\,,
\end{equation}
and for $A \subseteq B \subseteq \bbR^d$ both bounded and closed or open, we let
\begin{equation}
    \mathscr{h}^{\mathrm{hom}}_{A,B}(x) = W_x [(Z_t)_{t \geq 0} \text{ hits } A \text{ before exiting } B ]\,, \qquad x \in \bbR^d\, ,
\end{equation} 
be the harmonic potential (associated with $Z$) of $A$ with respect to $B$.
Then $\mathscr{h}^{\mathrm{hom}}_{A,B} \in W^{1,2}( \bbR^d )$. Moreover, for every $A \subseteq \bbR^d$ bounded and closed or open $\mathscr{h}^{\mathrm{hom}}_{A}$ belongs to the extended Dirichlet space of $(\bfD, W^{1,2}(\bbR^d))$ (this can be seen combining Theorem 4.3.3, p. 171 of~\cite{FukushimaOshimaTakeda+2010} with Theorem 2.1.5, p. 72 of the same reference, while in Example 1.5.3 in~\cite{FukushimaOshimaTakeda+2010} there is a characterization of the extended Dirichlet space).
Thus we can define the capacity (associated with the Brownian motion $Z$) of $A$, and the capacity of $A$ with respect to $B$, as
\begin{equation}
\label{eq:CapacityContinuumDef}
    \capa^\homo(A) = \bfD(\mathscr{h}^{\mathrm{hom}}_A,\mathscr{h}^{\mathrm{hom}}_A)\,, \qquad \capa^{\mathrm{hom}}_B (A) = \bfD ( \mathscr{h}^{\mathrm{hom}}_{A,B} , \mathscr{h}^{\mathrm{hom}}_{A,B} )\,,
\end{equation}
respectively.
Moreover, it is known that for every compact Lipschitz domain $V \subseteq \bbR^d$, letting $B$ be an open Euclidean ball such that $B \supseteq \overline{V}$, it holds for $\bbQ$-a.e.\ $\omega \in \Omega_{\lambda,\Lambda}$
\begin{equation}\label{eq:hom_capacityKilled}
    \frac{1}{N^{d-2}}\capa^\omega_{B_N}(V_N) \to \capa^{\mathrm{hom}}_{B}(V), \qquad \text{as } N \to \infty\,,
\end{equation}
and for every $V \subseteq \bbR^d$ bounded Lipschitz domain, for $\bbQ$-a.e.\ $\omega \in \Omega_{\lambda,\Lambda}$
\begin{equation}\label{eq:hom_capacity}
    \frac{1}{N^{d-2}}\capa^\omega(V_N) \to \capa^{\mathrm{hom}}(V), \qquad \text{as } N \to \infty\,.
\end{equation}
(see Proposition 5.3 and Corollary 5.4 of~\cite{chiarini2021disconnection}, whose proofs are based on the argument of Corollary 3.4 of~\cite{Neukamm_2017}).
In fact, the convergence in~\eqref{eq:hom_capacityKilled} and~\eqref{eq:hom_capacity} can be extended to every open bounded \emph{strongly regular} set (see Definition 1.8 in \cite{andres2025scaling} for a similar concept).
\begin{equation}\label{def:StronglyRegular}
    \begin{minipage}{0.85\linewidth}
        A set $V \subseteq \bbR^d$ is said to be strongly regular if for all $x \in \partial V$ one has $W_x[H_{\mathring{V}}=0]=1$, where for $A\subseteq \bbR^d$, $H_{A} = \inf\{t > 0\,: Z_t\in A\}$ is the first hitting time of $A$.
    \end{minipage}
\end{equation}
Examples of strongly regular sets are open bounded sets with Lipschitz boundary.

\begin{lemma} Let $V\subseteq\bbR^n$ be an open bounded strongly regular set. Let $B$ be an open ball such that $B \supseteq \overline{V}$. Then,    
\begin{equation}
    \mathscr{h}^{\mathrm{hom}}_{V, B} (x) = \mathscr{h}^{\mathrm{hom}}_{\overline{V}, B}(x)\quad\text{and}\quad\mathscr{h}^{\mathrm{hom}}_V (x) = \mathscr{h}^{\mathrm{hom}}_{\overline{V}}(x), \quad \text{for all } x \in \bbR^d\,.
\end{equation}
In particular,  $\capa_B^{\mathrm{hom}}(V) = \capa_B^{\mathrm{hom}}(\overline{V})$ and $\capa^{\mathrm{hom}}(V) = \capa^{\mathrm{hom}}(\overline{V})$.
\end{lemma}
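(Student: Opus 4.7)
The plan is to establish first the pointwise identities of the harmonic potentials; the capacity statements then follow at once from the defining formulas~\eqref{eq:CapacityContinuumDef}, since $\capa^{\mathrm{hom}}$ and $\capa^{\mathrm{hom}}_B$ are nothing but the Dirichlet energies of the corresponding potentials. The inclusion $V \subseteq \overline{V}$ gives one direction in both equalities, and the reverse inequality is extracted from the combination of the strong Markov property of $Z$ and the strong regularity condition~\eqref{def:StronglyRegular}.

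More precisely, fix $x \in \bbR^d$. Since $V \subseteq \overline{V}$, the event that $Z$ hits $V$ (respectively, hits $V$ before exiting $B$) is contained in the corresponding event for $\overline{V}$, which immediately gives $\mathscr{h}^{\mathrm{hom}}_V(x) \leq \mathscr{h}^{\mathrm{hom}}_{\overline{V}}(x)$ and $\mathscr{h}^{\mathrm{hom}}_{V,B}(x) \leq \mathscr{h}^{\mathrm{hom}}_{\overline{V},B}(x)$. For the opposite direction, I would introduce the entry time
\begin{equation}
    \sigma = \inf\{t \geq 0 : Z_t \in \overline{V}\},
\end{equation}
which is a stopping time by closedness of $\overline{V}$ and continuity of $Z$, and satisfies $Z_\sigma \in \overline{V}$ on $\{\sigma < \infty\}$. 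If $Z_\sigma \in V$, then $Z$ already hits $V$ by time $\sigma$; if instead $Z_\sigma \in \partial V$, then applying the strong Markov property at $\sigma$ together with the strong regularity $W_y[H_{\mathring{V}} = 0] = 1$ for every $y \in \partial V$ yields, $W_x$-almost surely, the existence of arbitrarily small $s > 0$ with $Z_{\sigma + s} \in V$. Hence $W_x$-a.s., on $\{\sigma < \infty\}$, $Z$ hits $V$, and this gives $\mathscr{h}^{\mathrm{hom}}_V(x) \geq \mathscr{h}^{\mathrm{hom}}_{\overline{V}}(x)$. The killed analogue is proved in the same way, with the additional remark that, since $B$ is open and $\overline{V} \subseteq B$, on the event $\{\sigma < T_B\}$ the point $Z_\sigma$ lies strictly inside $B$, so by continuity of paths the small time $s > 0$ produced above can be chosen with $\sigma + s < T_B$, ensuring that $Z$ hits $V$ strictly before exit from $B$.

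Once the two pointwise identities of harmonic potentials are in hand, the equalities $\capa^{\mathrm{hom}}_B(V) = \capa^{\mathrm{hom}}_B(\overline{V})$ and $\capa^{\mathrm{hom}}(V) = \capa^{\mathrm{hom}}(\overline{V})$ are immediate from~\eqref{eq:CapacityContinuumDef}, since both sides are the Dirichlet energies of the same function. The only delicate point is the joint use of the strong Markov property and strong regularity to conclude that a Brownian particle which touches $\overline{V}$ for the first time at a boundary point immediately enters the open set $V$; no further obstacle is expected.
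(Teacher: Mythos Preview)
Your proposal is correct and follows essentially the same route as the paper: both arguments apply the strong Markov property at the first time $Z$ reaches $\overline{V}$ and then invoke strong regularity~\eqref{def:StronglyRegular} to conclude that from any point of $\partial V$ the process immediately enters the open set $V$. Your write-up is slightly more explicit in separating the two inequalities and in handling the killed case (noting that $Z_\sigma \in \overline{V} \subseteq B$ leaves room to enter $V$ before exiting $B$), whereas the paper dispatches the killed version with ``the other equality is proven analogously.''
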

\begin{proof} Clearly by the strong Markov property one has $\mathscr{h}^{\mathrm{hom}}_V (x) = E^W_x[ W_{Z_{H_{\overline{V}}}}[H_V<\infty]\IND_{H_{\overline{V}}<\infty}]$, $E^W_x$ being the expectation associated with $W_x$. By strong regularity of $V$ and the fact that $V$ is open, for all $y\in \partial V$ one has that $W_{y}[H_V<\infty] = 1$ as $W_{y}[H_{V} = 0] = 1$. Hence, $W_{Z_{H_{\overline{V}}}}[H_V<\infty] =1$ on the event $\{H_{\overline{V}}<\infty\}$ and thus  $\mathscr{h}^{\mathrm{hom}}_V (x) = \mathscr{h}^{\mathrm{hom}}_{\overline{V}}(x)$. The other equality is proven analogously.
\end{proof}

We can now extend the convergence in~\eqref{eq:hom_capacityKilled} and~\eqref{eq:hom_capacity} to open bounded strongly regular sets. We briefly discuss only~\eqref{eq:hom_capacity} as the other is analogous. One can approximate from inside and outside the set $V$ with increasing, respectively decreasing, sequences $(U_n)_{n\in \bbN}$ and $(W_n)_{n\in \bbN}$ of open sets with smooth boundaries.
Then, by the monotonicity of the capacity, one has for all $N \geq 1$
\begin{equation}
    N^{d-2}\capa^\omega((U_n)_N) \leq N^{d-2}\capa^\omega(V_N) \leq N^{d-2}\capa^\omega((W_n)_N)\,,
\end{equation}
which taking the limit $N \to \infty$ and using~\eqref{eq:hom_capacity} for $U_n$ and $W_n$ leads to \begin{equation}
    \capa^{\mathrm{hom}}(U_n) \leq \varliminf_{N \to \infty} \frac{1}{N^{d-2}} \capa^\omega(V_N) \leq \varlimsup_{N \to \infty} \frac{1}{N^{d-2}} \capa^\omega(V_N) \leq \capa^{\mathrm{hom}}(W_n)\,,
\end{equation}
for $\bbQ$-a.e.\ $\omega \in \Omega_{\lambda,\Lambda}$.
Noting that $\capa^{\mathrm{hom}}(U_n) \to \capa^{\mathrm{hom}}(V)$ and $\capa^{\mathrm{hom}}(W_n) \to \capa^{\mathrm{hom}}(\overline{V}) = \capa^{\mathrm{hom}}(V)$ as $n \to \infty$ (the last equality follows from the previous lemma), we conclude that~\eqref{eq:hom_capacity} holds true for $V$.
\vspace{\baselineskip}

In Section \ref{sec:ExpConditionedField}, we will also need the following  homogenization result for the  uniform convergence of the discrete harmonic potential to the continuous one, in order to obtain Theorem~\ref{thm:MainTheorem}, ii), with $\mathscr{h}^{\mathrm{hom}}_V(x/N)$ in place of $h^\omega_{V_N}(x)$.

\begin{lemma}
\label{lem:UnifHarmonicPotential}
Let $V \subseteq \bbR^d$ be a bounded open strongly regular set. Then, for $\bbQ$-a.e.\ $\omega \in \Omega_{\lambda,\Lambda}$,
\begin{equation}
    \lim_{N\to\infty}\sup_{x \in \bbZ^d} \Big| h^\omega_{V_N}(x) - \mathscr{h}^{\mathrm{hom}}_V(x/N) \Big| = 0\,.
\end{equation}
\end{lemma}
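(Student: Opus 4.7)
The plan is to reduce, via a truncation, to the uniform convergence of the corresponding \emph{killed} harmonic potentials on a large ball, where the quenched invariance principle combined with the H\"older regularity \eqref{eq:HoelderRegProperty} will upgrade pointwise convergence to uniform convergence. Fix $R>0$ with $\overline{V}\subseteq U_R:=\{y\in\bbR^d\,:\,|y|<R\}$, write $U_{R,N}=(NU_R)\cap\bbZ^d$, and apply the strong Markov property at $T_{U_{R,N}}$ to obtain
\begin{equation*}
h^\omega_{V_N}(x)-h^\omega_{V_N,U_{R,N}}(x)=E^\omega_x\bigl[T_{U_{R,N}}<H_{V_N},\,h^\omega_{V_N}(X_{T_{U_{R,N}}})\bigr].
\end{equation*}
Using $h^\omega_{V_N}=e^\omega_{V_N}G^\omega$, the inclusion $V_N\subseteq B(0,C_VN)$ for some $C_V$ depending only on $V$, the capacity estimate \eqref{eq:QuenchedBoxCapacityEstimate} (yielding $\capa^\omega(V_N)\leq CN^{d-2}$), and the Green function upper bound \eqref{eq:QuenchedGFEstimate}, one checks $h^\omega_{V_N}(z)\leq CR^{-(d-2)}$ uniformly in $\omega,N$ for $z\notin U_{R,N}$. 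Hence $\sup_x|h^\omega_{V_N}(x)-h^\omega_{V_N,U_{R,N}}(x)|\leq CR^{-(d-2)}$. An identical Brownian-side argument, based on the decay $\mathscr{h}^{\mathrm{hom}}_V(y)\leq C|y|^{-(d-2)}$ (from the Green-function representation of $\mathscr{h}^{\mathrm{hom}}_V$ and finiteness of $\capa^{\mathrm{hom}}(V)$), produces the same tail bound for $|\mathscr{h}^{\mathrm{hom}}_V-\mathscr{h}^{\mathrm{hom}}_{V,U_R}|$.

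It then suffices to show that for each fixed $R$ and $\bbQ$-a.e.\ $\omega$, $\sup_{x\in\bbZ^d}|h^\omega_{V_N,U_{R,N}}(x)-\mathscr{h}^{\mathrm{hom}}_{V,U_R}(x/N)|\to 0$ as $N\to\infty$. Both quantities vanish for $x\notin U_{R,N}$, and the continuous one extends continuously by $0$ by regularity of $\partial U_R$ for $Z$. For $y\in U_R\setminus\overline{V}$ and $x=\lfloor Ny\rfloor$, pointwise convergence $h^\omega_{V_N,U_{R,N}}(x)\to \mathscr{h}^{\mathrm{hom}}_{V,U_R}(y)$ follows from the quenched invariance principle: under $P^\omega_x$, the rescaled process $(N^{-1}X_{N^2t})_{t\geq 0}$ converges in law to $Z$ under $W_y$, and the functional ``enters $\overline{V}$ before exiting $U_R$'' is continuous at $W_y$-a.e.\ path thanks to the strong regularity of $V$ and regularity of $\partial U_R$ for $Z$; the vanishing Hausdorff distance between $V_N/N$ and $\overline{V}$ is absorbed by the same continuity.

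To upgrade pointwise to uniform convergence, I would invoke \eqref{eq:HoelderRegProperty} applied to $h^\omega_{V_N,U_{R,N}}$, which is $\omega$-harmonic on $U_{R,N}\setminus V_N$ and bounded by $1$: after rescaling by $N$, the family $\{x\mapsto h^\omega_{V_N,U_{R,N}}(\lfloor Nx\rfloor)\}_{N,\omega}$ is equicontinuous on every compact $K\subset U_R\setminus\overline{V}$, with modulus depending only on $d(K,\partial V\cup\partial U_R)$ and independent of $\omega,N$. Arzel\`a--Ascoli combined with pointwise convergence on a countable dense set yields uniform convergence on each such $K$. The thin strips near $\partial V$ (both functions close to $1$) and near $\partial U_R$ (both close to $0$) are handled by sandwiching: given $\varepsilon>0$, pick $\delta>0$ so that $\mathscr{h}^{\mathrm{hom}}_{V,U_R}$ differs from its boundary values by at most $\varepsilon/3$ on these $\delta$-strips, and use the uniform H\"older modulus to control the oscillation of the discrete function there. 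Combining with the tail bound, one first chooses $R$ so that $CR^{-(d-2)}<\varepsilon/3$, then $N$ large enough (depending on $\omega$) so that the killed potentials differ by less than $\varepsilon/3$, and concludes by the triangle inequality.

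The main obstacle I anticipate is the uniform control near $\partial V$. Strong regularity is indispensable here: it guarantees that $\mathscr{h}^{\mathrm{hom}}_{V,U_R}$ attains the value $1$ continuously at every point of $\partial V$, matching $h^\omega_{V_N,U_{R,N}}\equiv 1$ on $V_N$. Without it, polar boundary points of $V$ could produce a jump discontinuity in the limit and destroy the sup-norm convergence.
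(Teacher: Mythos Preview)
Your truncation to a large ball and the tail estimate $CR^{-(d-2)}$ match the paper exactly. The core of your argument---pointwise convergence via the quenched invariance principle, then an upgrade to uniform convergence---is also the paper's strategy, but the paper executes the upgrade more cleanly and avoids the boundary difficulty you flag.

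The paper does \emph{not} pass to killed potentials, does not use the H\"older estimate \eqref{eq:HoelderRegProperty}, and does not invoke Arzel\`a--Ascoli. Instead it works directly with $h^\omega_{V_N}$ on $N\overline{B}_M\cap\bbZ^d$ and uses the \emph{arbitrary starting point} invariance principle (citing \cite{chencroydonkumagai2015}): for every sequence $x_N/N\to y\in\overline{B}_M$ one has $h^\omega_{V_N}(x_N)\to\mathscr{h}^{\mathrm{hom}}_V(y)$, including for $y\in\partial V$ thanks to strong regularity. Uniform convergence then follows by a one-line sequential compactness argument: if the sup did not vanish, extract a bad sequence $x_{N_k}$, pass to a subsubsequence with $x_{N_k}/N_k\to y$, and contradict the pointwise limit together with the continuity of $\mathscr{h}^{\mathrm{hom}}_V$ (which the paper gets from strong regularity via \cite{port2012brownian}). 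This handles boundary points automatically.

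Your boundary-strip argument, as written, has a gap. The H\"older bound \eqref{eq:HoelderRegProperty} requires harmonicity on $B(x_0,2r)$; for $x$ at distance $\leq\delta N$ from $V_N$ you may only take $r\lesssim\delta N$, so the modulus $(|x-y|_\infty/r)^\tau$ gives no control across the $\delta$-strip. What does work is the maximum principle: $h^\omega_{V_N,U_{R,N}}$ is harmonic on the strip, equals $1$ on $\partial_{\mathrm{in}}V_N$, and on the $2\delta$-shell it is close to $\mathscr{h}^{\mathrm{hom}}_{V,U_R}\geq 1-\varepsilon/3$ by your already-established uniform convergence on that compact. Replacing ``H\"older modulus'' by ``maximum principle'' closes the gap---but at that point the paper's subsequence argument is shorter and treats interior and boundary points in one stroke.
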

\begin{proof} Consider the closed Euclidean ball $\overline{B}_M$ of radius $M$, then 
    \begin{equation}
        \begin{aligned}
            \sup_{x \in \bbZ^d} | h^\omega_{V_N}(x) - \mathscr{h}^{\mathrm{hom}}_V(x/N) | &\leq \sup_{x \in N \overline{B}_{MN}\cap \bbZ^d} | h^\omega_{V_N}(x) - \mathscr{h}^{\mathrm{hom}}_V(x/N) | \\&+ \sup_{x \in N \overline{B}_{MN}^c \cap \bbZ^d} | h^\omega_{V_N}(x) - \mathscr{h}^{\mathrm{hom}}_V(x/N) |\,.
        \end{aligned}
    \end{equation}
    We first handle the second term on the right-hand side and see that by the decay of the Green function (see~\eqref{eq:QuenchedGFEstimate}), together with~\eqref{eq:QuenchedBoxCapacityEstimate} and \eqref{eq:EqPotentialAndMeasure}, (the same estimates hold also in the continuum) it is bounded from above by
    \begin{equation}\label{eq:UnifConvTail}
        \sup_{x \in N \overline{B}_{MN}^c \cap \bbZ^d} [| h^\omega_{V_N}(x)| +| \mathscr{h}^{\mathrm{hom}}_V(x/N) |] \leq \frac{c}{M^{d-2}}\,,
    \end{equation}
    which can be made arbitrarily small by choosing $M$ large. We now focus on the first term on the right-hand side. By~\cite[Appendix A]{chencroydonkumagai2015}, since the conductances are uniformly elliptic and bounded, an arbitrary starting point invariance principle holds. That is, for $\bbQ$-a.e.\ $\omega \in \Omega_{\lambda,\Lambda}$ it holds that for every sequence $(x_N)_{N \geq 1}$ such that $x_N \in (N \overline{B}_{MN}) \cap \bbZ^d$ and $x_N/N \to x \in \overline{B}_M$ the random walk $(N^{-1}X_{t N^2})_{t\geq 0}$ converges in law under $P^\omega_{x_N}$ to the Brownian motion $(Z_t)_{t \geq 0}$ started at $x$ with deterministic non-degenerate covariance matrix $a^\mathrm{hom}$. 

    Since $V$ is strongly regular, it follows that for $\bbQ$-a.e.\ $\omega \in \Omega_{\lambda,\Lambda}$, we have that for every sequence $(x_N)_{N \geq 1}$ such that $x_N \in (N \overline{B}_{MN}) \cap \bbZ^d$ and $x_N/N \to x \in \overline{B}_M$
    \begin{equation}
        h^\omega_{V_N}(x_N) = P^\omega_{x_N}[H_{V_N} < \infty] \to W_x[H_V < \infty] = \mathscr{h}^{\mathrm{hom}}_V(x)\,.
    \end{equation}
    By the arbitrariness of the sequence $(x_N)_{N \geq 1}$, we conclude that for $\bbQ$-a.e.\ $\omega \in \Omega_{\lambda,\Lambda}$ for all $M\in \bbN$
    \begin{equation}\label{eq:UnifConvIntermediate}
        \lim_{N \to \infty} \sup_{x \in N \overline{B}_{MN} \cap \bbZ^d} | h^\omega_{V_N}(x) - \mathscr{h}^{\mathrm{hom}}_V(x/N) | = 0\,,
    \end{equation}
    Indeed, if this were not the case, there would exist $\varepsilon >0$ and a sequence $(x_{N_k})_{k \geq 1}$ such that $x_{N_k} \in (N_k \overline{B}_{MN_k}) \cap \bbZ^d$ and $| h^\omega_{V_{N_k}}(x_{N_k}) - \mathscr{h}^{\mathrm{hom}}_V(x_{N_k}/N_k) | > \varepsilon$ for all $N_k \geq 1$. By compactness, we can assume without loss of generality that $x_{N_k}/N_k \to x \in \overline{B}_M$, which leads to a contradiction, as by strong regularity of $V$, $\mathscr{h}^{\mathrm{hom}}_V(x)$ is continuous (see~\cite[Proposition 3.6, p. 31]{port2012brownian}). By putting together \eqref{eq:UnifConvTail} and \eqref{eq:UnifConvIntermediate}, we conclude the proof taking $M\to \infty$.

\end{proof}

\section{Preliminary quenched asymptotic estimates for Gaussian functionals}
\label{sec:PrelimGaussianFunctional}
In this section we obtain bounds on some Gaussian functionals that hold uniformly over all weights configurations $\omega \in \Omega_{\lambda,\Lambda}$. These estimates yield the key Corollary~\ref{cor:UpperBoundProbZf}, that we later use in Section~\ref{subsec:UpperBoundHW} to obtain the asymptotic upper bound on the probability of the hard wall event $\cW^+_N = \{ \varphi_x \geq 0 \,, \forall x \in V_N \}$ as in Theorem~\ref{thm:MainTheorem}, i).  Corollary~\ref{cor:UpperBoundProbZf} will also be crucial in Section~\ref{sec:ExpConditionedField} for the characterization of the asymptotic law of the field conditioned on $\cW^+_N$, as stated in Theorem~\ref{thm:MainTheorem}, ii).

To define the relevant Gaussian functionals, we now divide $\bbZ^d$ into boxes and use them to decompose $\varphi$ into a harmonic average and a local field. For $L \geq 1$, $ K \geq 100 $ and $z \in \bbZ^d$ we define the family of boxes
\begin{equation}
    \label{eq:BoxDefinition}
        B_z  = z+ [-L,L)^d \cap \bbZ^d
        \subseteq U_z = z+ [-KL,KL)^d \cap \bbZ^d
\end{equation}
and we introduce a subset $\cS$ of $\bbZ^d$ such that
\begin{equation}
    \label{eq:CollectionC}
        \cS \subseteq \bbZ^d \text{ is a collection of vertices with mutual $| \cdot |_\infty$-distance at least }4KL\,,
\end{equation}
and we let 
\begin{equation}
    \label{eq:DefinitionC}
    S = \bigcup_{z \in \cC} B_z \,,
\end{equation}
be a union of disjoint boxes.

Given $\omega \in \Omega_{\lambda,\Lambda}$, for every $z \in \bbZ^d$ we decompose the Gaussian free field into a harmonic average $\xi^{\omega,U_z}$ on $U_z$, and the resulting local field $\psi^{\omega,U_z}$ (see~\eqref{eq:HarmonicAverageDef} and~\eqref{eq:LocalFieldDef} for their definition). For $z \in \bbZ^d$ we use the shorthands $\xi^{\omega,z} = \xi^{\omega,U_z}$ and $\psi^{\omega,z} = \xi^{\omega,U_z}$. By~\eqref{eq:DomainMP}, we have that $(\psi^{\omega,z})_{z \in \cS}$ is a family of independent centered Gaussian fields and it is independent from the collection of Gaussian fields $(\xi^{\omega,z}_x)_{x \in U_z}$, $z \in \cS$.

Following the construction in~\cite{chiarini2021disconnection} (which was inspired by~\cite{sznitzman2015disconnection}), we now define the functionals that capture how the harmonic averages associated with the set of vertices $\cS$ behave. For $\cS$ as in~\eqref{eq:CollectionC}, we introduce the collection of functions that select a point from each box $B_z$, $z\in\cS$. Formally, 
\begin{equation}
    \label{eq:DefinitionF}
    \cF = \{ f \in (\bbZ^d)^{\cS} : f(z) \in B_z \text{ for all } z \in \cS \}\,.
\end{equation}
We also consider the coefficients
\begin{equation}
    \label{eq:DefinitionNu}
    \nu^\omega (z) = \frac{e^\omega_S (B_z)}{\capa^\omega (S)}\,, \qquad z \in \cS\,,
\end{equation}
and, for $x \in \bbZ^d$ and $r >0 $, the distribution $\eta^\omega_{x,r}$ of the first visit of the random walk to the complement of the ball $B(x,r)$,
\begin{equation}
    \label{eq:DefinitionEta}
    \eta^\omega_{x,r}(y) = P^\omega_x [ X_{T_{B(x,r)}} = y ] \,, \qquad y \in \bbZ^d \,.
\end{equation}
The Gaussian functionals associated with $f \in \cF$ are the convex combinations
\begin{equation}
    \label{eq:ZfDefinition}
    Z^\omega_f = \sum_{z \in \cC} \nu^{\omega}(z) \xi^{\omega,z}_{f(z)}
\end{equation}
and, for $x \in \bbZ^d$ and $r>0$,
\begin{equation}\label{eq:ZfbetabDefinition}
    Z^\omega_{f,\beta,b} = b Z^\omega_f - \beta \langle \varphi , \eta^\omega_{x,r} \rangle \,, \qquad b>0\,, \beta \geq 0 \,,
\end{equation}
whose dependence on $x$ and $r$ is suppressed in the notation.
Finally, we define the variable
\begin{equation}
    \label{eq:ZDefinition}
    Z^\omega = \sup_{f \in \cF} Z^\omega_f\,.
\end{equation}
The next theorem provides the upper bounds on the variance of $Z^\omega_{f,\beta,b}$ and the expectation of $Z^\omega$ which are uniform in the choice of the environment and that we need for the proof of Corollary~\ref{cor:UpperBoundProbZf}. It is similar in spirit to Theorem 6.1 in~\cite{chiarini2021disconnection} and Theorem 4.2 in~\cite{sznitzman2015disconnection}. 
\begin{theorem}
    \label{thm:ZfbetabBounds}
    Recall $L \geq 1$, $K \geq 100$ are integers. Then, for all $r=r(L)>0$ such that $r(L)\to \infty$ as $L \to \infty$, all $b=b(L)>0$ bounded sequences, and for every $\beta \geq 0$,
    \begin{equation}\label{eq:VarZfbetabBound}
        \limsup_{L \to \infty } \sup_{\omega \in \Omega_{\lambda,\Lambda}} \sup_{\cS} \sup_{f \in \cF} \sup_{x \in \bbZ^d} \Big\{ r^{d-2}\bbV\mathrm{ar}^\omega[Z^\omega_{f,\beta,b}] - \frac{r^{d-2}}{\capa^\omega (S)}\alpha^\omega_{K,L,\beta,b}\Big\} \leq c_6 \beta^2  \,,
    \end{equation}
    where $\bbV\mathrm{ar}^\omega$ is the variance associated with $\bbP^\omega$, and
    \begin{equation}\label{eq:DefAlpha}
        \alpha_{K,L,\beta,b}^\omega = b^2 -2\beta b \langle h^\omega_S ,\eta^\omega_{x,r} \rangle + \cR_{K,L,\beta,b} \,,
    \end{equation}
    with
    \begin{align}
        \label{eq:REstimate}
        &0\leq \cR_{K,L,\beta,b} \leq b^2 U(K,L) + 2b\beta \Big[U(K,L) + \langle \IND_{\cU} ,\eta^\omega_{x,r} \rangle \Big] \,,\\
        \label{eq:UDecay}
        &\cU = \bigcup_{z\in \cS} U_z\,,\qquad\lim_{K \to \infty} \limsup_{L \to \infty} U(K,L) = 0 \,,\qquad U(K,L) \geq 0 \,.
    \end{align}
    Moreover,
    \begin{equation}\label{eq:ExpZBound}
        \sup_{L} \sup_{\omega \in \Omega_{\lambda,\Lambda}} \sup_{\cS} |\bbE^\omega[Z^\omega]| \left( \frac{\capa^\omega (S)}{|\cS|} \right)^{1/2} \leq \frac{c_7}{K^{c_8}} \,.
    \end{equation}
\end{theorem}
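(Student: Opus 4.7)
I would establish the two estimates~\eqref{eq:VarZfbetabBound} and~\eqref{eq:ExpZBound} separately, both exploiting the orthogonal Markov decomposition~\eqref{eq:DomainMP} together with the H\"older regularity~\eqref{eq:HoelderRegProperty} of $\omega$-harmonic functions.

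For the variance bound, I would first expand bilinearly
\begin{equation*}
\mathrm{Var}^\omega(Z^\omega_{f,\beta,b}) = b^2\,\mathrm{Var}^\omega(Z^\omega_f) - 2\beta b\,\mathrm{Cov}^\omega(Z^\omega_f,\langle\varphi,\eta^\omega_{x,r}\rangle) + \beta^2\,\mathrm{Var}^\omega(\langle\varphi,\eta^\omega_{x,r}\rangle),
\end{equation*}
and analyze each piece. The $\beta^2$ piece is the simplest: since $\langle\varphi,\eta^\omega_{x,r}\rangle = \xi^{\omega,B(x,r)}_x$ is a harmonic average, the Markov property~\eqref{eq:DomainMP} combined with~\eqref{eq:DecompositionGreenKilledGreen} gives $\mathrm{Var}^\omega(\langle\varphi,\eta^\omega_{x,r}\rangle) = g^\omega(x,x) - g^\omega_{B(x,r)}(x,x) = E^\omega_x[g^\omega(X_{T_{B(x,r)}},x)] \le c/r^{d-2}$ by~\eqref{eq:QuenchedGFEstimate}, producing the $c_6\beta^2$ term after multiplication by $r^{d-2}$. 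For $\mathrm{Var}^\omega(Z^\omega_f)$, the joint independence of $(\psi^{\omega,z})_{z\in\cS}$ and their independence from $(\xi^{\omega,z'})_{z'}$, combined with the decomposition $\varphi_{f(z)} = \xi^{\omega,z}_{f(z)} + \psi^{\omega,z}_{f(z)}$, yields the clean identity
\begin{equation*}
\mathrm{Var}^\omega(Z^\omega_f) = \sum_{z,z'\in\cS}\nu^\omega(z)\nu^\omega(z')\,g^\omega(f(z),f(z')) - \sum_{z\in\cS}\nu^\omega(z)^2\,g^\omega_{U_z}(f(z),f(z)).
\end{equation*}
The first sum is matched to $1/\capa^\omega(S)$ via the fundamental identity $\sum_{w\in S} g^\omega(f(z),w)e^\omega_S(w) = h^\omega_S(f(z)) = 1$ (so that $\sum_{z'}\sum_{w\in B_{z'}}g^\omega(f(z),w)e^\omega_S(w)/\capa^\omega(S) = 1/\capa^\omega(S)$) by replacing $g^\omega(f(z),f(z'))$ with a $B_{z'}$-average weighted by $e^\omega_S$; for $z'\ne z$, this substitution costs at most a factor $K^{-\tau}/\capa^\omega(S)$ via~\eqref{eq:HoelderRegProperty} applied to the $\omega$-harmonic function $g^\omega(f(z),\cdot)$ on each distant box $B_{z'}$, and the diagonal term is absorbed using $g^\omega(f(z),f(z))-g^\omega_{U_z}(f(z),f(z))\le c/(KL)^{d-2}$ (from~\eqref{eq:DecompositionGreenKilledGreen} and~\eqref{eq:QuenchedGFEstimate}) combined with $\sum_z\nu^\omega(z)^2\le CL^{d-2}/\capa^\omega(S)$, the latter following from $e^\omega_S(B_z)\le\capa^\omega(B_z)\le CL^{d-2}$. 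An analogous computation for $\mathrm{Cov}^\omega(Z^\omega_f,\langle\varphi,\eta^\omega_{x,r}\rangle)$ produces the main contribution $\langle h^\omega_S,\eta^\omega_{x,r}\rangle/\capa^\omega(S)$ together with the additional $\langle\IND_\cU,\eta^\omega_{x,r}\rangle$ correction appearing in~\eqref{eq:REstimate}, which arises precisely on the event that the walk from $x$ first exits $B(x,r)$ inside $\cU$, where the harmonicity needed for the substitution breaks down.

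For~\eqref{eq:ExpZBound}, since the coordinates $f(z)\in B_z$ are chosen independently across $z\in\cS$, the supremum distributes and $Z^\omega = \sum_{z\in\cS}\nu^\omega(z)\sup_{y\in B_z}\xi^{\omega,z}_y$. Applying~\eqref{eq:HoelderRegProperty} to the $\omega$-harmonic function $\xi^{\omega,z}$ on $U_z\supseteq B(z,KL/2)$ (with $r=KL/2$) yields $\sup_{y\in B_z}\xi^{\omega,z}_y \le \xi^{\omega,z}_z + cK^{-\tau}\sup_{y\in B(z,KL)}|\xi^{\omega,z}_y|$. Since $\bbE^\omega[\xi^{\omega,z}_z]=0$ and the Gaussian supremum on the right is controlled by a standard maximum estimate together with the uniform variance bound $\mathrm{Var}^\omega(\xi^{\omega,z}_y) \le g^\omega(y,y) \le \overline{g}$ from~\eqref{eq:QuenchedGFEstimate}, taking expectations and using $\sum_z\nu^\omega(z)=1$ together with the elementary bound $\sqrt{\capa^\omega(S)/|\cS|}\le CL^{(d-2)/2}$ produces the claimed polynomial decay in $K$.

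The main difficulty will be the careful bookkeeping of the errors in the variance expansion: the H\"older substitutions used both in the first sum of $\mathrm{Var}^\omega(Z^\omega_f)$ and in $\mathrm{Cov}^\omega(Z^\omega_f,\langle\varphi,\eta^\omega_{x,r}\rangle)$ must be packaged into a single remainder $\cR_{K,L,\beta,b}$ obeying~\eqref{eq:REstimate}, with the uniform decay $U(K,L)\to 0$ as $K\to\infty$ arising from the uniform H\"older exponent $\tau$ provided by~\eqref{eq:HoelderRegProperty}, and the $\langle\IND_\cU,\eta^\omega_{x,r}\rangle$ piece isolated exclusively to the $\beta b$ cross contribution.
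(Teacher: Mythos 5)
Your plan for the variance estimate \eqref{eq:VarZfbetabBound} is essentially the paper's argument: the same bilinear expansion, the same bound $c_6 r^{2-d}$ for the $\beta^2$ term via \eqref{eq:DecompositionGreenKilledGreen} and \eqref{eq:QuenchedGFEstimate}, and the comparison of $g^\omega(f(z),\cdot)$ with $e^\omega_S$-weighted box averages, which the paper carries out through the ratio quantities $\gamma^\omega(K,L)$, $\widetilde\gamma^\omega(K,L)$ imported from \cite{chiarini2021disconnection} and which are themselves controlled by the H\"older estimate \eqref{eq:HoelderRegProperty}. Your identification of the $\langle\IND_{\cU},\eta^\omega_{x,r}\rangle$ correction with the exit points lying in $\cU$, where one instead drops the (nonnegative) cross-covariances, matches the paper's treatment of the cross term, so this half of the proposal is sound.

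The expectation bound \eqref{eq:ExpZBound}, however, does not close as you argue it. The identity $Z^\omega=\sum_{z\in\cS}\nu^\omega(z)\max_{y\in B_z}\xi^{\omega,z}_y$ is fine, but your per-box estimate is too lossy on two counts. First, for $y$ in the bulk of $U_z$ the relevant variance is not $g^\omega(y,y)\le\overline{g}$ but $\bbV\mathrm{ar}^\omega(\xi^{\omega,z}_y)=g^\omega(y,y)-g^\omega_{U_z}(y,y)\le c(KL)^{2-d}$; with your bound you only obtain $\bbE^\omega[Z^\omega]\le cK^{-\tau}\sqrt{\log(KL)}$, and after multiplying by $\sqrt{\capa^\omega(S)/|\cS|}\le CL^{(d-2)/2}$ the resulting quantity $cK^{-\tau}L^{(d-2)/2}\sqrt{\log(KL)}$ diverges as $L\to\infty$ for fixed $K$, whereas \eqref{eq:ExpZBound} carries a $\sup_L$ and must hold uniformly in $L$. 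Second, even with the correct variance scale, a union-bound maximum estimate over the $\sim(KL)^d$ points still produces a factor $\sqrt{\log(KL)}$, again destroying uniformity in $L$. What is actually needed is $\bbE^\omega[\max_{y\in B_z}\xi^{\omega,z}_y]\le cK^{-c'}(KL)^{(2-d)/2}$, which combined with $\sqrt{\capa^\omega(S)/|\cS|}\le CL^{(d-2)/2}$ does give \eqref{eq:ExpZBound}; to reach it one must chain, e.g.\ use a canonical-distance estimate of the form $\bbE^\omega[(\xi^{\omega,z}_y-\xi^{\omega,z}_{y'})^2]\le c\,(|y-y'|_\infty/(KL))^{\tau}(KL)^{2-d}$ (H\"older regularity applied to the covariance, which is $\omega$-harmonic in each variable) together with Dudley's entropy bound, whose entropy integral then only costs a $\sqrt{\log K}$. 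This chaining input is precisely what the paper imports by invoking Theorem 1.3.3 of \cite{adler2007random} through the proofs in \cite{sznitzman2015disconnection} and \cite{chiarini2021disconnection}; without it, your argument proves a strictly weaker, non-uniform-in-$L$ statement.
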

\begin{proof}
    The result~\eqref{eq:ExpZBound} on the expectation of $Z^\omega$ can be proven as (6.14) of Theorem 6.1 in~\cite{chiarini2021disconnection}, which is a straightforward adaptation of the proof of Theorem 4.2 in~\cite{sznitzman2015disconnection}, that exploits Theorem 1.3.3 of~\cite{adler2007random}.

    We now start the proof of~\eqref{eq:VarZfbetabBound} and~\eqref{eq:DefAlpha}.
    For any $\omega \in \Omega_{\lambda,\Lambda}$ we define the sequences $\gamma^\omega$ and $\widetilde{\gamma}^\omega$ as
    \begin{equation}
        \begin{split}
            \gamma^\omega(K,L) & = \sup_{\substack{z,z' \in \bbZ^d \\ |z-z'|_\infty \geq KL}} \, \sup_{\substack{x,y \in B_z \\ x', y' \in B_{z'}} }  \frac{g^\omega(y,y')}{g^\omega(x,x')} \,(\geq 1) \,, \\
            \widetilde{\gamma}^\omega(K,L) & = \inf_{\substack{z,z' \in \bbZ^d \\ |z-z'|_\infty \geq KL}} \, \inf_{\substack{x,y \in B_z \\ x',y' \in B_{z'}} }  \frac{g^\omega(y,y')}{g^\omega(x,x')} \,(\leq 1)\,.
        \end{split}
    \end{equation}
    By the arguments from (6.23) to (6.28) in the proof of Theorem 6.1 in~\cite{chiarini2021disconnection} it can be proven that 
    \begin{align}
        \label{eq:ClaimGamma}
        \lim_{K \rightarrow \infty} \limsup_{L \rightarrow \infty} \sup_{\omega \in \Omega_{\lambda,\Lambda}} \gamma^\omega(K,L) & = 1 \,, \text{ and }\\
        \label{eq:ClaimTildeGamma}
        \lim_{K \rightarrow \infty} \liminf_{L \rightarrow \infty} \inf_{\omega \in \Omega_{\lambda,\Lambda}} \widetilde{\gamma}^\omega(K,L) &= 1 \,.
    \end{align}
    For any $\cS$ as in~\eqref{eq:CollectionC}, $f \in \cF$, $x \in \bbZ^d$, $r>0$, $b>0$ and $\beta \geq 0$, the field $(Z^\omega_{f,\beta,b})_{f \in \cF}$ is centered Gaussian, hence
    \begin{equation}
        \label{eq:DecompositionVarZfbetab}
        \bbV\mathrm{ar}^\omega [Z^\omega_{f,\beta,b}] = b^2 \bbV\mathrm{ar}^\omega [Z^\omega_f] + \beta^2 \bbV\mathrm{ar}^\omega [\langle \varphi,\eta^\omega_{x,r} \rangle] -2b\beta \bbE^\omega [Z^\omega_f \langle \varphi,\eta^\omega_{x,r} \rangle] \,.
    \end{equation}
    For the first addendum, by the arguments leading up to (6.31) in the proof of Theorem 6.1 in~\cite{chiarini2021disconnection} we have that uniformly over $f \in \cF$
    \begin{equation}
        \label{eq:VarZfBound}
        \bbV\mathrm{ar}^\omega[Z^\omega_f] \leq \capa^\omega (S)^{-1} [ cK^{2-d} + \gamma^\omega(K,L) ] \,.
    \end{equation}
    For the second addendum we have that
    \begin{equation}
        \begin{aligned}
            \label{eq:VarDotProdBound}
            \bbV\mathrm{ar}^\omega [\langle \varphi, \eta^\omega_{x,r} \rangle] & \stackrel{\eqref{eq:DefinitionEta}}{=} \sum_{y \in \partial B(x,r)} \eta^\omega_{x,r}(y) E^\omega_x[ T_{ B(x,r)} < \infty, g^\omega(y,X_{T_{ B(x,r)}}) ] \\
            & \, \,= \sum_{y \in \partial B(x,r)} \eta^\omega_{x,r}(y) g^\omega (y,x) \leq c_6 \, r^{2-d}
        \end{aligned}
    \end{equation}
    with $c_6$ independent of $\omega\in\Omega_{\lambda,\Lambda}$ and $x \in \bbZ^d$, having used in the second step~\eqref{eq:DecompositionGreenKilledGreen} and the fact that $y \in \partial B(x,r)$, while in the third step we used the quenched bounds~\eqref{eq:QuenchedGFEstimate} on $g^\omega$ and the fact that $\eta^\omega_{x,r}$ is a probability measure.
    We now estimate the third addendum in the right-hand side of~\eqref{eq:DecompositionVarZfbetab}.
    Defining the set
    \begin{equation}
        \label{eq:DefinitionU}
        \cU = \bigcup_{z \in \cS} U_z \,,
    \end{equation}
    it holds
    \begin{equation}
        \label{eq:ExpZfScalProdEstimate1}
        \bbE^\omega [Z^\omega_f \langle \varphi,\eta^\omega_{x,r} \rangle] \geq \sum_{z \in \cS} \sum_{\substack{y \in \partial B(x,r)\\ y \notin \cU}} \nu^\omega(z) \eta^\omega_{x,r}(y) \bbE^\omega [\xi^{\omega,z}_{f(z)} \varphi_y ] \,.
    \end{equation}
    By the random walk representation of the harmonic average in~\eqref{eq:HarmonicAverageDef}, we have that for $z \in \cC$ and $y \in \partial B(x,r)$
    \begin{equation}
        \bbE^\omega [\xi^{\omega,z}_{f(z)} \varphi_y ] = \sum_{\substack{y' \in \partial U_z}} P^\omega_{f(z)} [X_{T_{U_z}} = y'] g^\omega (y',y) = E^\omega_{f(z)} [ T_{U_z} < \infty , g^\omega (X_{T_{U_z}},y) ] = g^\omega (f(z),y) \,,
    \end{equation}
    having used in the last step~\eqref{eq:DecompositionGreenKilledGreen} again and the fact that $y \in \cU^c$.
    Then by the definition~\eqref{eq:DefinitionNu} of $\nu^\omega$ the inequality~\eqref{eq:ExpZfScalProdEstimate1} becomes
    \begin{equation}
        \label{eq:ExpZfScalProdEstimate2}
        \begin{aligned}
            \bbE^\omega [Z^\omega_f \langle \varphi,\eta^\omega_{x,r} \rangle] & \geq \frac{1}{\capa^\omega (S)} \sum_{\substack{y \in \partial B(x,r) \\ y \notin \cU}} \eta^\omega_{x,r}(y) \sum_{z \in \cS} e^\omega_S (B_z) g^\omega (f(z),y) \\
            & \!\! \stackrel{\eqref{eq:EqPotentialAndMeasure}}{\geq} \frac{\widetilde{\gamma}^\omega(K,L)}{\capa^\omega(S)} ( \langle \eta^\omega_{x,r}, h^\omega_S \rangle - \langle \IND_{\cU} , \eta^\omega_{x,r} \rangle ) \,.
        \end{aligned}
    \end{equation}
    Defining
    \begin{equation}
        U(K,L) = \Big( cK^{2-d} + \sup_{\omega \in \Omega_{\lambda,\Lambda}} \gamma^\omega(K,L) -1 \Big) \vee \Big( 1-\inf_{\omega \in \Omega_{\lambda,\Lambda}} \widetilde{\gamma}^\omega(K,L) \Big) \,,
    \end{equation}
    the result~\eqref{eq:VarZfbetabBound} is implied by the decomposition~\eqref{eq:DecompositionVarZfbetab} and the three estimates~\eqref{eq:VarZfBound},~\eqref{eq:VarDotProdBound} and~\eqref{eq:ExpZfScalProdEstimate2}. 
    \end{proof}
    
As a consequence of the controls on the variance of $Z^\omega_{f,\beta,b}$ and on the expectation of $Z^\omega$, we get the following corollary which states a quenched exponential bound on the tail probabilities of $\sup_{f \in \cF}Z^\omega_{f,\beta,b}$. Its proof is immediate by applying in order~\eqref{eq:ExpZBound}, Borell-TIS inequality and~\eqref{eq:VarZfbetabBound}. We use this corollary with $\beta = 0$ in Section~\ref{subsec:UpperBoundHW} for the proof of the upper bound for Theorem~\ref{thm:MainTheorem}, i), and with $\beta >0$ in Section~\ref{sec:ExpConditionedField} for the proof of ii).
\begin{corollary}
    \label{cor:UpperBoundProbZf}
    Recall $L \geq 1$, $K \geq 100$. Let $r=r(L)>0$ such that $r(L)\to \infty$ as $L \to \infty$, $b=b(L)>0$ bounded sequence, and $\beta \geq 0$. Then, for every $\Delta, \Theta > 0$ it holds that
    \begin{equation}
        \begin{aligned}
        \label{eq:UpperBoundZf}
        & \varlimsup_{L \to \infty} \sup_{\omega\in\Omega_{\lambda,\Lambda}} \sup_{\cS} \sup_{x\in\bbZ^d} \Bigg\{ \bbP^\omega \Big[ \sup_{f \in \cF} Z^\omega_{f,\beta,b} \geq b \Delta -\beta \Theta \Big] \\
        & \quad - 2   \exp \Bigg[ -\frac{1}{2} \bigg( b \bigg( \Delta - \frac{c_7}{K^{c_8}}\sqrt{\frac{|\cS|}{\capa^\omega(S)}} \bigg) -\beta \Theta \bigg)_+^2 \frac{\capa^\omega(S)}{c_6\beta^2 r^{2-d}\capa^\omega(S)+\alpha^\omega_{K,L,\beta,b}} \Bigg] \Bigg\} \leq 0 \,,
        \end{aligned}
    \end{equation}
    where $\alpha^\omega_{K,L,\beta,b}$ is defined in Theorem~\ref{thm:ZfbetabBounds}. When $\beta = 0$, and $b = 1$ this bound simplifies to
      \begin{equation}
        \label{eq:UpperBoundZf_beta0}
        \begin{aligned}
            & \varlimsup_{L \to \infty} \sup_{\omega\in\Omega_{\lambda,\Lambda}} \sup_{\cS} \sup_{x\in\bbZ^d} \Bigg\{ \bbP^\omega \Big[ \sup_{f \in \cF} Z^\omega_{f} \geq \Delta \Big] \\
            & \quad - 2 \exp \Bigg[ -\frac{1}{2} \bigg( \Delta - \frac{c_7}{K^{c_8}}\sqrt{\frac{|\cS|}{\capa^\omega(S)}} \bigg)_+^2 \frac{\capa^\omega(S)}{1+U(K,L)} \Bigg] \Bigg\} \leq 0\,,
        \end{aligned}
    \end{equation}
    with $U(K,L)$ as in Theorem~\ref{thm:ZfbetabBounds}.
\end{corollary}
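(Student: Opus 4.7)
The plan is to view $M \ldef \sup_{f \in \cF} Z^\omega_{f,\beta,b}$ as a supremum over a centered Gaussian family and combine the Borell-TIS concentration inequality with the mean bound \eqref{eq:ExpZBound} and the variance bound \eqref{eq:VarZfbetabBound} of Theorem~\ref{thm:ZfbetabBounds}. Since $\langle \varphi,\eta^\omega_{x,r}\rangle$ is independent of $f$ and $b>0$, one has $M = b\,Z^\omega - \beta\langle\varphi,\eta^\omega_{x,r}\rangle$, and since $\varphi$ is centered under $\bbP^\omega$, also $\bbE^\omega[M] = b\,\bbE^\omega[Z^\omega]$. The bound \eqref{eq:ExpZBound} then gives
\begin{equation*}
|\bbE^\omega[M]| \leq b\,\frac{c_7}{K^{c_8}}\sqrt{\frac{|\cS|}{\capa^\omega(S)}},
\end{equation*}
uniformly in $\omega$, $\cS$, $x$ and $L$.

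Next I would rewrite the target event $\{M \geq b\Delta - \beta\Theta\}$ as $\{M - \bbE^\omega[M] \geq b\Delta - \beta\Theta - \bbE^\omega[M]\}$ and, using the previous display, bound the deterministic threshold on the right from below by $b(\Delta - \frac{c_7}{K^{c_8}}\sqrt{|\cS|/\capa^\omega(S)}) - \beta\Theta$. Borell-TIS applied to the centered Gaussian $M - \bbE^\omega[M]$ then yields, for every $u \geq 0$,
\begin{equation*}
\bbP^\omega\bigl[M - \bbE^\omega[M] \geq u\bigr] \leq \exp\Bigl(-\frac{u^2}{2\sigma_L^2}\Bigr),\qquad \sigma_L^2 \ldef \sup_{f\in\cF}\bbV\mathrm{ar}^\omega[Z^\omega_{f,\beta,b}].
\end{equation*}
Taking $u$ equal to the positive part of the threshold — with the prefactor $2$ in \eqref{eq:UpperBoundZf} absorbing the trivial case $u=0$ — produces the Borell-TIS form of the right-hand side of \eqref{eq:UpperBoundZf}, but with $\sigma_L^2$ in place of its asymptotic ceiling.

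Finally I would invoke \eqref{eq:VarZfbetabBound} to control $\sigma_L^2$. Rearranging that estimate yields, uniformly in $\omega,\cS,x,f$,
\begin{equation*}
\sigma_L^2 \leq \frac{c_6\beta^2 r^{2-d}\capa^\omega(S) + \alpha^\omega_{K,L,\beta,b}}{\capa^\omega(S)} + o_L(r^{2-d}),
\end{equation*}
where $o_L(r^{2-d})/r^{2-d} \to 0$ as $L \to \infty$ uniformly in $\omega,\cS,x,f$. Since the Borell-TIS exponent is decreasing in $\sigma^2$, plugging this bound in and passing to the limsup in $L$ produces \eqref{eq:UpperBoundZf}. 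The special case $\beta=0$, $b=1$ then follows at once by substituting $\alpha^\omega_{K,L,0,1} = 1 + \cR_{K,L,0,1} \leq 1 + U(K,L)$ from \eqref{eq:REstimate} into the general bound and noting that the contribution $c_6\beta^2 r^{2-d}\capa^\omega(S)$ vanishes. The only delicate point in this scheme is the uniform passage to the limsup through the exponent, which follows from the uniform controls on the mean and variance supplied by Theorem~\ref{thm:ZfbetabBounds}; I do not expect any genuine obstacle.
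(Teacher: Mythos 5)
Your proposal follows exactly the paper's proof: the paper states that the corollary is immediate by applying, in order, the expectation bound \eqref{eq:ExpZBound}, the Borell--TIS inequality, and the variance bound \eqref{eq:VarZfbetabBound}, which is precisely your chain (recentering by the mean, one-sided Borell--TIS with the positive part and the factor $2$ absorbing the trivial case, then the variance estimate). The one point you flag as delicate---the uniform passage of the $o_L(r^{2-d})$ slack through the exponent---is treated at the same level of detail as in the paper and is harmless, since the variance estimate established in the proof of Theorem~\ref{thm:ZfbetabBounds} in fact holds non-asymptotically with the exact $\alpha^\omega_{K,L,\beta,b}$.
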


\section{Quenched decay of the probability of the hard wall event}\label{sec:ProbHW}
\subsection{Lower bound}
In this section we prove the quenched asymptotic lower bound, as $N \to \infty$, for the rate of decay for the probability of the hard wall event $\cW_N^+ = \{ \varphi_x \geq 0, \, \forall x \in V_N \}$, and get the optimal constant as in Theorem~\ref{thm:MainTheorem}, i).

We follow a classical change of measure approach, namely we tilt the Gaussian free field measure in such a way that $\cW_N^+$ becomes typical under the resulting new measure. Then we use the relative entropy inequality~\eqref{eq:RelEntropyInequal2} to compare the probability of $\cW_N^+$ under the original and the tilted measure. This yields an estimate in which $N^{2-d}\capa^\omega(V_N)$ appears, that converges $\bbQ$-a.s.\ to the capacity $\capa^{\mathrm{hom}} (V)$ of the continuum domain $V$ by the homogenization result~\eqref{eq:hom_capacity}.

\begin{prop}
    \label{prop:LowerBoundHW}
    For $V\subseteq \bbR^d$ an open and bounded strongly regular
set and for $\bbQ$-a.e.\ $\omega \in \Omega_{\lambda,\Lambda}$ it holds that
    \begin{equation}
        \liminf_{N \to \infty} \frac{1}{N^{d-2}\log N} \log \bbP^\omega[\cW_N^+] \geq - 2 \overline{g}\, \capa^{\mathrm{hom}}(V)\,.
    \end{equation}
  \end{prop}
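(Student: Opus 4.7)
My plan is to implement the classical change-of-measure strategy sketched right after the statement. For $\delta>0$ fixed, set
\[
M_N=(1+\delta)\sqrt{4\overline g\log N}, \qquad f_N=M_N\,h^\omega_{V_N},
\]
which is finitely supported, equals $M_N$ on $V_N$, is $\omega$-harmonic outside $V_N$, and whose Dirichlet energy is $\cE^\omega(f_N,f_N)=M_N^2\,\capa^\omega(V_N)$ by the identities~\eqref{eq:EqPotentialAndMeasure}--\eqref{eq:RelationDirichletFormGreenFunction}. Let $\widetilde{\bbP}_N^\omega=\bbP^\omega T_{f_N}^{-1}$ be the corresponding tilted measure; by~\eqref{eq:RadonNikodymShift} the canonical field $\varphi$ under $\widetilde{\bbP}_N^\omega$ has the law of $\varphi+f_N$ under $\bbP^\omega$, and the relative entropy is
\[
H_N := H(\widetilde{\bbP}_N^\omega\,|\,\bbP^\omega) = \tfrac12\cE^\omega(f_N,f_N) = \tfrac12(1+\delta)^2\cdot 4\overline g\log N\cdot \capa^\omega(V_N).
\]

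Applying~\eqref{eq:RelEntropyInequal2} with $F=\cW_N^+$ gives
\[
\log\bbP^\omega[\cW_N^+] \geq \log\widetilde{\bbP}_N^\omega[\cW_N^+] - \frac{H_N+e^{-1}}{\widetilde{\bbP}_N^\omega[\cW_N^+]}.
\]
The key task is therefore to verify that the hard-wall event is typical under the tilt, i.e.\ $\widetilde{\bbP}_N^\omega[\cW_N^+]\to 1$ for $\bbQ$-a.e.\ $\omega$, or equivalently that $\bbP^\omega[\min_{x\in V_N}\varphi_x\geq -M_N]\to 1$. Granted this, the right-hand side is $-(1+o(1))H_N$, and invoking the quenched homogenization of the discrete capacity~\eqref{eq:hom_capacity}, namely $N^{2-d}\capa^\omega(V_N)\to\capa^{\mathrm{hom}}(V)$, one obtains
\[
\liminf_{N\to\infty}\frac{1}{N^{d-2}\log N}\log\bbP^\omega[\cW_N^+] \geq -2(1+\delta)^2\,\overline g\,\capa^{\mathrm{hom}}(V),
\]
and the conclusion follows by letting $\delta\downarrow 0$.

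I expect the nontrivial step to be the typicality of $\cW_N^+$ under the tilt with the chosen height. The value $\sqrt{4\overline g\log N}$ is what gives the right constant $2\overline g$ in the end, but it is strictly smaller than the leading order $\sqrt{2d\overline g\log N}$ of the unconditioned maximum on $V_N$, so a naive Borell--TIS / union-bound argument at the macroscopic scale is insufficient. The correct picture should be the multi-scale one alluded to in the discussion after the theorem: at scale $L=N^{2/d}$ the local GFF maxima are precisely of order $\sqrt{4\overline g\log N}$, so the tilt is tailored to dominate local fluctuations, while the long-range "undertow" carrying the slowly varying correlations has strictly sub-leading contribution. Ergodicity of $\bbQ$ enters in identifying $\overline g$ as the correct constant: the points with on-site variance close to its essential supremum form a positive-density subset of $V_N$ and drive the worst-case lower tail of $\min_{V_N}\varphi$; the required concentration at the right height is then to be extracted from the Gaussian tools recalled in~\eqref{eq:FKGproduct}--\eqref{eq:BrascampLieb}.
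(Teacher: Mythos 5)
There is a genuine gap at the step you yourself flag as the ``key task'': with the shift height $M_N=(1+\delta)\sqrt{4\overline g\log N}$, the hard wall event does \emph{not} become typical under the tilted measure. Under $\bbP^\omega$ the minimum of $\varphi$ over $V_N$ concentrates at leading order around $-\sqrt{2d\,\overline g\log N}$ (the mirror image of \eqref{eq:MaxGFFVN}), and for $d\geq 3$ and $\delta$ small one has $(1+\delta)\sqrt{4\overline g}<\sqrt{2d\,\overline g}$, so $\widetilde{\bbP}^\omega_N[\cW^+_N]=\bbP^\omega[\min_{V_N}\varphi\geq -M_N]\to 0$; in fact it decays like $\exp\{-N^{d-2(1+\delta)^2+o(1)}\}$, since a positive density of well-separated sites with variance close to $\overline g$ each fail to exceed $-M_N$ with probability of order $N^{-2(1+\delta)^2}$. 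Consequently the factor $1/\widetilde{\bbP}^\omega_N[\cW^+_N]$ multiplying $H_N\sim N^{d-2}\log N$ in \eqref{eq:RelEntropyInequal2} blows up super-exponentially and the inequality gives nothing. The multi-scale picture you invoke explains why the \emph{conditioned} field sits at height $\sqrt{4\overline g\log N}$, but it does not make the event typical under a plain constant-height tilt: the local dips below $-M_N$ persist. If instead you raise the tilt to $\approx\sqrt{2d\,\overline g\log N}$ so that typicality genuinely holds, the entropy cost becomes $\tfrac12\cdot 2d\,\overline g\log N\cdot\capa^\omega(V_N)$ and you land on the suboptimal constant $d\,\overline g\,\capa^{\rm hom}(V)$ rather than $2\overline g\,\capa^{\rm hom}(V)$.

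The paper's proof circumvents precisely this point by inserting a second measure: it tilts by $f_N=\alpha_N h^\omega_{V_N,B_N}$ with $\alpha_N=\sqrt{4\overline g\log N}$ (no $\delta$), and then \emph{conditions} the tilted measure on $\cW^+_N$, calling the result $\bbP_3$, so that $\bbP_3[\cW^+_N]=1$ and the denominator in \eqref{eq:RelEntropyInequal2} is harmless. The price is the extra entropy $H(\bbP_3|\bbP_2)=-\log\bbP^\omega[\varphi_x\geq -\alpha_N,\ \forall x\in V_N]$, which by FKG and the one-site Gaussian tail is at most $-|V_N|\log\phi(\alpha_N/\sqrt{\overline g})\leq C N^{d-2}/\sqrt{\log N}=o(N^{d-2}\log N)$: one only needs the \emph{log}-probability to be negligible, not the probability to tend to $1$, and this is exactly what fixes the height $\sqrt{4\overline g\log N}$ (so that $N^d e^{-\alpha_N^2/2\overline g}=N^{d-2}$). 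There remains the cross term $\alpha_N\bbE_3[\cE^\omega(h^\omega_{V_N,B_N},\varphi)]$, which the paper kills with \eqref{eq:RelEntropyInequal1}. To repair your argument you would need this extra conditioning step (or an equivalent device); as written, the route through ``$\widetilde{\bbP}^\omega_N[\cW^+_N]\to1$'' cannot be completed at the stated height. A minor additional point: $h^\omega_{V_N}$ is strictly positive on all of $\bbZ^d$, hence not finitely supported as you claim; the paper uses the killed potential $h^\omega_{V_N,B_N}$ (supported in $B_N$) so that \eqref{eq:RadonNikodymShift} applies verbatim, and sends $R\to\infty$ at the end.
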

  \begin{proof} We follow the idea in~\cite[Section 3.5]{giacomin2001aspects} that dealt with the homogeneous case.
    Let $\alpha_N = \sqrt{4\overline{g}\log N}$ and $B \subseteq \bbR^d$ an open ball of radius $R$ such that $B \supseteq V$. We define the probability measure of the field shifted by a height $f_N = \alpha_N h^\omega_{V_N,B_N}$ as $\bbP_2 = \bbP^\omega T_{f_N}^{-1}$ (recall the definition~\eqref{eq:EquilibriumPotentialKilledAndCapa} of the equilibrium potential $h^\omega_{V_N,B_N}$ for the killed random walk). We also define a third probability measure $\bbP_3 [\,\cdot\,] = \bbP_2 [\,\cdot\, | \cW_N^+]$, for which the hard wall event is typical.
  
    We want to use the relative entropy inequality~\eqref{eq:RelEntropyInequal2} in order to compare the measure of the hard wall event under $\bbP^\omega$ and $\bbP_3$.    
    Since $\bbP_3$-a.s.\ it holds that $\De \bbP_3/\De \bbP^\omega = (\De \bbP_3 / \De \bbP_2)(\De \bbP_2 / \De \bbP^\omega )$, we have that
    \begin{equation}\label{eq:EntropyDecomposition}
        H (\bbP_3 | \bbP^\omega) = H(\bbP_3 | \bbP_2 ) + \bbE_3 \Big[ \log \frac{\De \bbP_2}{\De \bbP^\omega} \Big]
    \end{equation}
    where $\bbE_3$ is the expectation associated with $\bbP_3$.
    We now show that the first term on the right-hand side is negligible as $N \to \infty$, and that the cost of the change of measure in the limit is therefore given just by the second term. By the FKG inequality~\eqref{eq:FKGproduct} we have that
    \begin{equation}\label{eq:LBHWFirstTerm}
        \begin{aligned}
            H(\bbP_3 | \bbP_2 ) &= - \log \bbP^\omega [ T_{f_N}^{-1} \cW^+_N ] = -\log \bbP^\omega [ \varphi_x \geq -\alpha_N,\, \forall x \in V_N ] \\
            & \leq - \sum_{x \in V_N} \log \bbP^\omega [ \varphi_x \geq -\alpha_N ] \leq - |V_N| \log \phi (\alpha_N/\sqrt{\overline{g}})
        \end{aligned}
    \end{equation}
    where $\phi$ is the standard normal cumulative distribution function. Using that \\ $\phi(x) \geq 1 - \tfrac{1}{\sqrt{2\pi} x} \exp(-x^2/2)$ for $x > 0$, we get that for all $N$ sufficiently large
    \begin{equation}
            H(\bbP_3 | \bbP_2 ) \leq - c\, N^{d} \log \bigg( 1- \frac{c'}{\alpha_N/\sqrt{\,\overline{g}}} \exp \left(-\alpha_N^2/2\overline{g}\right)\bigg) \leq C\, \frac{N^{d-2}}{\sqrt{4 \log N }}\,.
    \end{equation}
    Hence
    \begin{equation}
        \lim_{N\to\infty} \frac{1}{N^{d-2}}H(\bbP_3 | \bbP_2 ) = 0
    \end{equation}
    and we can therefore neglect the first term in the right-hand side of~\eqref{eq:EntropyDecomposition}.
    For the second term, we have that
    \begin{equation}\label{eq:LBHWSecondTerm}
        \begin{aligned}
          \bbE_3 \Big[ \log \Big(\frac{\De \bbP_2}{\De \bbP^\omega} (\varphi)\Big) \Big] & = \bbE^{\omega} \Big[ \log \Big( \frac{\De \bbP_2}{\De \bbP^\omega} ( T_{f_N} \varphi)\Big) \Big| T_{f_N}^{-1} \cW_N^+ \Big]\\
          & = \alpha_N \, \bbE^\omega \Big[ \cE^\omega(h^\omega_{V_N,B_N} , \varphi) \Big| T^{-1}_{f_N} \cW_N^+ \Big] + \frac{\alpha_N^2}{2}\, \capa^{\omega}_{B_N}(V_N)
        \end{aligned}
    \end{equation}
    where in the second step we used the characterizations of the shifted measure~\eqref{eq:RadonNikodymShift}.
    We now want to prove that the first term in the right-hand side of~\eqref{eq:LBHWSecondTerm} is negligible.
    Note that $\cE^\omega (h^\omega_{V_N,B_N} , \varphi) = \langle -\cL^\omega h^\omega_{V_N,B_N} , \varphi \rangle_{\mu^\omega}$ is a centered Gaussian with variance $\capa^\omega_{B_N}(V_N)$. Applying the relative entropy inequality~\eqref{eq:RelEntropyInequal1} with $u (\varphi) = \cE^\omega (h^\omega_{V_N,B_N} , \varphi)$ to relate $\bbP^\omega$ and $\bbP^\omega[\,\cdot\, | T_{f_N}^{-1} \cW^+_N]$, we get
    \begin{equation}
        \Big|\alpha_N\, \bbE^\omega [ \left.\cE^\omega(h^\omega_{V_N,B_N} ,\varphi)\right| T_{f_N}^{-1}\cW^+_N ]\Big| \leq \alpha_N \left\{ \capa^{\omega}_{B_N}(V_N) / 2 - \log \bbP^\omega [ T_{f_N}^{-1} \cW^+_N ] \right\}\,.
    \end{equation}
    When dividing by $N^{d-2} \log{N}$, in the limit for $N \to \infty$ the first addendum in the right-hand side goes to $0$, since $\limsup_{N\to\infty} N^{2-d}\capa^{\omega}_{B_N}(V_N) < \infty$ by~\eqref{eq:QuenchedBoxCapacityEstimate}, and the second addendum also goes to $0$ since it is equal to $H(\bbP_3 | \bbP_2)$ by~\eqref{eq:LBHWFirstTerm}.
    Plugging this into~\eqref{eq:LBHWSecondTerm} yields
    \begin{equation}
    \lim_{N\to\infty}\frac{1}{N^{d-2}\log N} \, \bbE_3 \Big[ \log \Big(\frac{\De \bbP_2}{\De \bbP^\omega} (\varphi)\Big) \Big] = 2 \overline{g}\, \capa^{\mathrm{hom}}_B(V)\,.
    \end{equation}
    Coming back to~\eqref{eq:EntropyDecomposition}, we have shown that
    \begin{equation}
        \lim_{N\to\infty}\frac{1}{N^{d-2}\log N} H(\bbP_3 | \bbP^\omega) =
        2 \overline{g}\, \capa^{\mathrm{hom}}_B(V)\,,
    \end{equation}
    thus, the claim follows from~\eqref{eq:RelEntropyInequal2} since $\bbP_3[\cW^+_N]=1$ and upon taking $R \to \infty$.
  \end{proof}

\subsection{Upper bound}
\label{subsec:UpperBoundHW}
In this section we prove the quenched asymptotic upper bound, as $N \to \infty$, for the rate of decay for the probability of the hard wall event $\cW^+_N = \{ \varphi_x \geq 0, \, \forall x \in V_N \}$, and get the optimal constant as in Theorem~\ref{thm:MainTheorem}, i).

We consider the division of $V_N$ into boxes (whose size depends on $N$) and the associated decomposition of $\varphi$ into harmonic averages and local fields, as in Section~\ref{sec:PrelimGaussianFunctional}. We show that, under $\cW^+_N$, for $\bbQ$-a.e.\ environment $\omega \in \Omega_{\lambda,\Lambda}$, with high $\bbP^\omega$-probability, on the set $V_N$ there is a large number of ``good" boxes where there is at least one vertex in which the harmonic average is above the threshold $\sqrt{4\overline{g}\log N}$ (note that by Theorem~\ref{thm:MainTheorem}, ii), this is the first-order asymptotic behavior of $\bbE^\omega[\varphi_x|\cW^+_N]$ when $x \in V_N$). We then encode this event into the supremum of the Gaussian functionals we defined in Section~\ref{sec:PrelimGaussianFunctional}, whose tail probabilities we estimate via Corollary~\ref{cor:UpperBoundProbZf}. This yields a bound in terms of the capacity, associated with the random walk, of the union of good boxes. As $N \to \infty$, we show that the union of good boxes has solidification properties (akin to those studied in~\cite{nitzschner2018entropic},~\cite{chiarini2021disconnection} and~\cite{sznitzman2015disconnection}) that let us control such capacity by $\capa^\omega(V_N)$, hence capturing the exact decay rate of $\bbP^\omega[\cW^+_N]$ through the homogenization result~\eqref{eq:hom_capacity}.

We divide the proof into three parts. In Section~\ref{subsec:MaxLocalField} we characterize the asymptotic behavior of the expectation of the maximum of the local field on mesoscopic boxes in $V_N$ as their size grows to infinity. In Section~\ref{subsec:Solidification} we prove the solidification property of the union of good boxes. Finally, in Section~\ref{subsec:ProofUBHW} we exploit Corollary~\ref{cor:UpperBoundProbZf} and the results of Section~\ref{subsec:MaxLocalField} and Section~\ref{subsec:Solidification} to complete the proof for the upper bound.

\subsubsection{Quenched maximum of the local field on boxes}
\label{subsec:MaxLocalField}

From now on we consider $L = L(N)$ to be a growing sequence of integers. We set
\begin{equation}
    \label{eq:DefinitionL}
    L = \left\lceil \frac{N^{2/d}}{(\log N)^{1/2d}} \right\rceil \,, \qquad N \in \bbN \,,
\end{equation}
and consider, for $K \geq 100d$ integer,
\begin{equation}
    \label{eq:DefinitionCN}
    \cC_N = ((4KL) \bbZ^d) \cap V_N \,,
\end{equation}
which satisfies~\eqref{eq:CollectionC}.
Note that the cardinality of $\cC_N$ is of order $(N/ (KL))^d$, hence
\begin{equation}
    \label{eq:CardinalityCN}
    c\,K^{-d} N^{d-2}\sqrt{\log N} \leq |\cC_N| \leq C\, K^{-d} N^{d-2}\sqrt{\log N} \,.
\end{equation}

In the next proposition we show that, for $\bbQ$-a.e.\ $\omega \in \Omega_{\lambda,\Lambda}$, on every box $B_z$, $z \in \cC_N$, there is a (uniformly in $z\in\cC_N$) positive proportion of points where the local field $\psi^{\omega,z}$ has high variance. To have such uniformity, quantitative ergodicity for the environment is needed. Although in the Proposition below conductances are assumed to be i.i.d.\ to ease the exposition,  the result can be extended to environments with enough mixing  (see Remark~\ref{rem:Discussion}, 1) below and Proposition~\ref{prop:AlphaMixPositiveOfPortionHighVariance} for a precise statement in the case of polynomially mixing environments).

\begin{prop}
    \label{prop:PositivePortionOfHighVariance}
    Let $\varepsilon > 0$.
    Then, there exists a constant $q>0$  such that for $\bbQ$-a.e.\ $\omega \in \Omega_{\lambda,\Lambda}$
    \begin{equation}
        \label{eq:PositivePortionOfHighVariance}
        \liminf_{N \to \infty} \inf_{z \in \cC_N}\frac{1}{|B_z|}\sum_{y \in B_z} \IND_{\{ g_{U_z}^\omega(y,y) \geq \overline{g} - \varepsilon \}} \geq q \,.
    \end{equation}
\end{prop}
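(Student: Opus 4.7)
The plan is to reduce the inner statement to a Chernoff-type bound for independent Bernoulli variables, via an $R$-local truncation of the Green function. First, for $y \in B_z$ and $L$ large enough that $B(y,R) \subseteq U_z$, the domain monotonicity of the killed Green function gives $g^\omega_{U_z}(y,y) \geq g^\omega_{B(y,R)}(y,y)$, so it suffices to bound from below $\sum_{y \in B_z} \IND_{\{g^\omega_{B(y,R)}(y,y) \geq \overline{g}-\varepsilon\}}$ for a fixed $R$ to be chosen large. Applying~\eqref{eq:DecompositionGreenKilledGreen} with $U=B(y,R)$ together with the upper bound in~\eqref{eq:QuenchedGFEstimate} yields $0 \leq g^\omega(y,y) - g^\omega_{B(y,R)}(y,y) \leq c/R^{d-2}$ uniformly in $\omega \in \Omega_{\lambda,\Lambda}$. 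Choosing $R$ so that $c/R^{d-2} \leq \varepsilon/2$, one then has the inclusion $\{g^\omega(y,y) \geq \overline{g}-\varepsilon/2\} \subseteq \{g^\omega_{B(y,R)}(y,y) \geq \overline{g}-\varepsilon\}$. Since $\overline{g}$ is the essential supremum of $g^\omega(0,0)$ and $\bbQ$ is stationary, the probability $p \ldef \bbQ[g^\omega(0,0) \geq \overline{g}-\varepsilon/2]$ is strictly positive, so $\bbQ[g^\omega_{B(y,R)}(y,y) \geq \overline{g}-\varepsilon] \geq p$ for every $y$.

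Next I would exploit the i.i.d.\ structure. The event $\{g^\omega_{B(y,R)}(y,y) \geq \overline{g}-\varepsilon\}$ is measurable with respect to the conductances of edges with at least one endpoint in $B(y,R)$. Picking a sub-lattice $\widetilde B_z \subseteq B_z$ of points at mutual $|\cdot|_\infty$-distance at least $2R+2$, one has $|\widetilde B_z| \geq c (L/R)^d$, and the Bernoulli variables $\{\IND_{\{g^\omega_{B(y,R)}(y,y) \geq \overline{g}-\varepsilon\}}\}_{y \in \widetilde B_z}$ are i.i.d.\ under $\bbQ$ with success probability at least $p$. A standard Chernoff bound then yields
\begin{equation*}
    \bbQ\Big[\textstyle\sum_{y \in \widetilde B_z} \IND_{\{g^\omega_{B(y,R)}(y,y) \geq \overline{g}-\varepsilon\}} < \tfrac{p}{2}|\widetilde B_z|\Big] \leq \exp\!\big(-c'(L/R)^d\big).
\end{equation*}

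Finally, I would union bound over $z \in \cC_N$ and invoke Borel--Cantelli. Since $|\cC_N| \leq C N^d$ and $L^d$ is of order $N^2/\sqrt{\log N}$ by~\eqref{eq:DefinitionL}, with $R$ fixed the probability that the Chernoff estimate fails for some $z \in \cC_N$ is at most $C N^d \exp(-c'' N^2/\sqrt{\log N})$, which is summable in $N$. By Borel--Cantelli, for $\bbQ$-a.e.\ $\omega$ there exists $N_0(\omega)$ such that for every $N \geq N_0(\omega)$ and every $z \in \cC_N$
\begin{equation*}
    \sum_{y \in B_z} \IND_{\{g^\omega_{U_z}(y,y) \geq \overline{g}-\varepsilon\}} \geq \sum_{y \in \widetilde B_z} \IND_{\{g^\omega_{B(y,R)}(y,y) \geq \overline{g}-\varepsilon\}} \geq \tfrac{p}{2}|\widetilde B_z| \geq q \, |B_z|
\end{equation*}
for some deterministic $q > 0$ (depending on $\varepsilon$ and $R$), giving the claim.

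The main obstacle is conceptual rather than computational: one must replace $g^\omega(y,y)$, whose dependence on $\omega$ is global, by a local surrogate $g^\omega_{B(y,R)}(y,y)$ which is $R$-local in the conductances, at a cost controlled uniformly by the Green function decay. Once this substitution is made, the i.i.d.\ hypothesis renders the Chernoff step routine, and the fact that $L^d$ vastly exceeds $\log|\cC_N|$ makes Borel--Cantelli comfortable. In the polynomially mixing setting discussed in Appendix~\ref{secapp:ProofPositivePortionOfHighVariance}, exact independence of the local Bernoulli events is lost and one must replace the Chernoff step by a concentration estimate derived from a quantitative mixing bound, which is the only additional ingredient needed to extend the above outline.
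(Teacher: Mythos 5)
Your proposal is correct and follows essentially the same route as the paper: replace $g^\omega_{U_z}(y,y)$ by the $R$-local surrogate $g^\omega_{B(y,R)}(y,y)$ via \eqref{eq:DecompositionGreenKilledGreen} and \eqref{eq:QuenchedGFEstimate}, use stationarity and the definition of $\overline{g}$ to get a positive success probability, then exploit independence of the local events under i.i.d.\ conductances with a Chernoff bound, a union bound over $\cC_N$, and Borel--Cantelli. The only (harmless) difference is that you keep a single well-separated sub-lattice $\widetilde B_z$, which costs a factor $R^{-d}$ in $q$, whereas the paper covers all of $B_z$ by $\ell^d$ translated sub-lattices and a pigeonhole step, obtaining $q$ independent of $R$; since the proposition only asserts some $q>0$ (depending on $\varepsilon$), both versions suffice for its later use.
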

\begin{proof}
    We first find, by means of~\eqref{eq:DecompositionGreenKilledGreen} and the quenched bounds~\eqref{eq:QuenchedGFEstimate}, $R = R(\varepsilon)$ sufficiently large such that $g^\omega(y,y) \leq g^\omega_{B(y,R)}(y,y) + \varepsilon/2$ for all $y \in \bbZ^d$ and $\omega \in \Omega_{\lambda,\Lambda}$.
    By stationarity of $\bbQ$, we have 
    \begin{equation}\label{eq:parameter_q}
        \bbQ[g_{B(y,R)}^\omega(y,y)\geq \overline{g} - \varepsilon] \geq \bbQ[g^\omega(0,0)\geq \overline{g} - \varepsilon/2] =: 2q > 0 \,,
    \end{equation}
    where the last inequality follows from  $\overline{g} = \esssup_{\omega \in \Omega_{\lambda,\Lambda}} g^\omega(0,0)$.
    We aim to show that $\bbQ$-a.s.
\begin{equation}
        \label{eq:PositivePortionOfHighVariance2}
        \liminf_{N \to \infty} \inf_{z \in \cC_N}\frac{1}{|B_z|}\sum_{y \in B_z} \IND_{\{ g_{B(y,R)}^\omega(y,y) \geq \overline{g} - \varepsilon \}} \geq q.
    \end{equation}
    from which the claim follows as $g_{U_z}^\omega(y,y) \geq g_{B(y,R)}^\omega(y,y)$ for $N$ large enough.
    To that end, we introduce the event
    \begin{equation}
        \label{eq:DefinitionJz}
        J^\omega_N(z) = \Big\{ \frac{1}{|B_z|}\sum_{y \in B_z} \IND_{\{ g^\omega_{B(y,R)}(y,y) \geq \overline{g} - \varepsilon \}} < q \Big\} \,,
    \end{equation}
whose $\bbQ$-probability is independent of $z$, and provide a summable upper bound on $\bbQ[\cup_{z \in \cC_N} J^\omega_N(z)]$.
    By Borel-Cantelli lemma, this will imply that for $\bbQ$-a.e.\ $\omega \in \Omega_{\lambda,\Lambda}$ there exists $N_0 = N_0(\omega)$ such that for all $N \geq N_0$ and all $z \in \cC_N$ the event $J^\omega_N(z)$ does not occur, which is equivalent to~\eqref{eq:PositivePortionOfHighVariance2}.

    We start estimating $\bbQ[J^\omega_N(z)]$. We take $\ell = 4 R$ and denote the box of side length $\ell$ by $\square_\ell = [0,\ell)^d \cap \bbZ^d$. Then 
\begin{equation}
     J^\omega_N(z) \subseteq \Big\{ \sum_{x \in \square_\ell} \sum_{y \in B_{z,x}} \IND_{ \{ g^\omega_{B(y,R)}(y,y) \geq \overline{g} - \varepsilon \} } < q|B_z|  \Big\} \,,
\end{equation}
where $B_{z,x} = x + (\ell\bbZ^d \cap B_z)$.
Hence, $J^\omega_N(z)$ is also contained in the event that there exists a $x \in \square_\ell$ such that the average of the indicator functions on the set $B_{z,x}$ is smaller than $q$.
Formally,
\begin{equation}
        \bbQ[J^\omega_N(z)]  \leq \bbQ \Big[ \bigcup_{x \in \square_\ell} \Big\{ \frac{1}{|B_{z,x}|}\sum_{y \in B_{z,x}} \IND_{\{ g^\omega_{B(y,R)}(y,y) \geq \overline{g} - \varepsilon \}} < q  \Big\} \Big] \,.
\end{equation}
For every $x \in \square_\ell$ the collection $\{ \IND_{\{ g^\omega_{B(y,R)}(y,y) \geq \overline{g} - \varepsilon \}} : \, y \in B_{z,x} \}$ is composed of independent variables, since $\ell = 4R$ and for each $y\in B_{z,x}$ the random variable $g^\omega_{B(y,R)}(y,y)$ depends only on  $\{\omega_{e}\, :\,e\cap B(y,R) \neq \varnothing,\,e\in \bbE_d \}$ and the conductances are i.i.d. Then, by Chernoff's inequality for the lower tail of Binomial random variables (see Exercise 2.3.5 in~\cite{vershynin2018high}), together with~\eqref{eq:parameter_q}, we have that
\begin{equation}
    \bbQ[J^\omega_N(z)] \leq \ell^d \mathrm{e}^{-c (L/\ell)^d} \,.
\end{equation}
Hence~\eqref{eq:DefinitionL} yields
\begin{equation}
    \sum_{N \in \bbN} \bbQ \Big[\bigcup_{z \in \cC_N} J^\omega_N(z) \Big] \stackrel{\eqref{eq:CardinalityCN}}{\leq} C \sum_{N \in \bbN} N^{d-2} \sqrt{\log N} \mathrm{e}^{-c L^d} < \infty \,.
\end{equation}
By Borel-Cantelli lemma we have the claim~\eqref{eq:PositivePortionOfHighVariance2}.
\end{proof}

The following proposition characterizes the first order behavior of the maximum of the local field $\psi^{\omega,z}$ uniformly over all the boxes $B_z$, $z \in \cC$. We later use this result in the asymptotic upper bound for the probability of the hard wall event $\cW^+_N$, in order to show that, conditionally on $\cW^+_N$, with high probability there is a large number of ``good boxes'' where the maximum of the harmonic average $\xi^{\omega,z}$ is above the threshold $\sqrt{4\overline{g}\log N}$.
\begin{prop}
    \label{prop:MaxLocalField}
    For $\bbQ$-a.e.\ $\omega \in \Omega_{\lambda,\Lambda}$
    \begin{equation}\label{eq:MaxLocalField}
        \lim_{N \to \infty} \sup_{z \in \cC_N} \left| \frac{\bbE^\omega[ \max_{y \in B_z} \psi^{\omega,z}_y ]}{\sqrt{2\overline{g} \log |B_z|}} - 1 \right| = 0 \,.
    \end{equation}
\end{prop}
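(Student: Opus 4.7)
The plan is to prove matching uniform upper and lower bounds for $\bbE^\omega[\max_{y\in B_z}\psi^{\omega,z}_y]$, both centered around $\sqrt{2\overline{g}\log|B_z|}$, and then let an auxiliary $\varepsilon>0$ tend to zero. The key inputs are the a.s.\ uniform pointwise bound $g^\omega_{U_z}(y,y)\le \overline{g}$ for the upper bound, and Proposition~\ref{prop:PositivePortionOfHighVariance} together with the Sudakov--Fernique comparison for the lower bound.

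\emph{Upper bound.} For every $\omega \in \Omega_{\lambda,\Lambda}$ and $z \in \bbZ^d$, the vector $(\psi^{\omega,z}_y)_{y \in B_z}$ is centered Gaussian, and by~\eqref{eq:DecompositionGreenKilledGreen} together with the identity $\overline{g}=\sup_{x\in\bbZ^d}g^\omega(x,x)$ valid $\bbQ$-a.s., its pointwise variances satisfy $g^\omega_{U_z}(y,y)\le g^\omega(y,y)\le \overline{g}$ uniformly in $y$, $z$, and in $\omega$ on a set of full $\bbQ$-measure. The classical bound $\bbE[\max_{i\le n} X_i]\le \sigma\sqrt{2\log n}$ for centered Gaussians with $\max_i\bbE[X_i^2]\le \sigma^2$ immediately gives $\bbE^\omega[\max_{y\in B_z}\psi^{\omega,z}_y]\le \sqrt{2\overline{g}\log|B_z|}$, uniformly in $z \in \cC_N$.

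\emph{Lower bound.} Fix $\varepsilon \in (0,\overline{g}/2)$. By Proposition~\ref{prop:PositivePortionOfHighVariance}, for $\bbQ$-a.e.\ $\omega$ there exist $q=q(\varepsilon)>0$ and $N_0=N_0(\omega)<\infty$ such that $A_z := \{y \in B_z : g^\omega_{U_z}(y,y) \ge \overline{g}-\varepsilon\}$ satisfies $|A_z| \ge q|B_z|$ for every $N\ge N_0$ and $z\in \cC_N$. Choose an integer $R=R(\varepsilon)$ large enough that $c_2/R^{d-2}\le \varepsilon/2$, with $c_2$ as in~\eqref{eq:QuenchedGFEstimate}, and extract by a greedy packing $A'_z\subseteq A_z$ with pairwise $\ell^\infty$-distance $\ge R$ and $|A'_z|\ge q|B_z|/(2R-1)^d$. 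Using $g^\omega_{U_z}\le g^\omega$ and~\eqref{eq:QuenchedGFEstimate}, for distinct $y,y'\in A'_z$ one has
\begin{equation*}
\bbE^\omega\big[(\psi^{\omega,z}_y-\psi^{\omega,z}_{y'})^2\big] = g^\omega_{U_z}(y,y)+g^\omega_{U_z}(y',y')-2g^\omega_{U_z}(y,y')\ge 2(\overline{g}-\varepsilon)-2c_2/R^{d-2}\ge 2(\overline{g}-2\varepsilon).
\end{equation*}
Let $(\eta_y)_{y\in A'_z}$ be i.i.d.\ $\cN(0,\overline{g}-2\varepsilon)$, so that $\bbE[(\eta_y-\eta_{y'})^2]=2(\overline{g}-2\varepsilon)$ is dominated by the $L^2$-distance of $\psi^{\omega,z}$ on $A'_z$. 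The Sudakov--Fernique comparison then yields
\begin{equation*}
\bbE^\omega\big[\max_{y\in B_z}\psi^{\omega,z}_y\big]\ge \bbE^\omega\big[\max_{y\in A'_z}\psi^{\omega,z}_y\big]\ge \bbE\big[\max_{y\in A'_z}\eta_y\big]=\sqrt{\overline{g}-2\varepsilon}\,\bbE\big[\max_{y\in A'_z}\xi_y\big],
\end{equation*}
with $(\xi_y)$ i.i.d.\ standard Gaussians. The classical asymptotic $\bbE[\max_{i\le n}\xi_i]=(1+o(1))\sqrt{2\log n}$ as $n\to\infty$, combined with $|A'_z|\ge q|B_z|/(2R-1)^d$ and the fact that $|B_z|$ is the same for every $z$, gives $\log|A'_z|=(1+o(1))\log|B_z|$ uniformly in $z\in \cC_N$, whence $\bbE^\omega[\max_{y\in B_z}\psi^{\omega,z}_y]\ge (1+o(1))\sqrt{2(\overline{g}-2\varepsilon)\log|B_z|}$ uniformly in $z$.

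Combining both bounds and dividing by $\sqrt{2\overline{g}\log|B_z|}$ one obtains
\begin{equation*}
\sqrt{(\overline{g}-2\varepsilon)/\overline{g}}-o(1)\le \inf_{z\in\cC_N}\frac{\bbE^\omega[\max_{y\in B_z}\psi^{\omega,z}_y]}{\sqrt{2\overline{g}\log|B_z|}}\le \sup_{z\in\cC_N}\frac{\bbE^\omega[\max_{y\in B_z}\psi^{\omega,z}_y]}{\sqrt{2\overline{g}\log|B_z|}}\le 1,
\end{equation*}
and letting $\varepsilon\downarrow 0$ yields the claim. The delicate point is uniformity in $z\in \cC_N$: it is delivered by the $z$-uniform fraction $q$ in Proposition~\ref{prop:PositivePortionOfHighVariance} and by the $z$-independence of both $|B_z|$ and the sparsification radius $R(\varepsilon)$. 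Sudakov--Fernique is crucial here because it reduces the control of correlated Gaussian extremes to a pure $L^2$-distance comparison with an i.i.d.\ reference, bypassing any second-moment analysis that would require finer control on the tail of the Green function.
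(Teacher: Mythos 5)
Your proof is correct and follows essentially the same route as the paper: the upper bound from the uniform variance bound $g^\omega_{U_z}(y,y)\le\overline{g}$, and the lower bound from Proposition~\ref{prop:PositivePortionOfHighVariance}, a sparsification of the high-variance set, and a Sudakov--Fernique comparison with i.i.d.\ Gaussians. The only (cosmetic) differences are that the paper dominates by i.i.d.\ Gaussians via Sudakov--Fernique also in the upper bound, and keeps the sparsification radius $J$ as a separate parameter sent to infinity rather than tying it to $\varepsilon$ as you do.
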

\begin{proof}
    We first prove the upper bound. By Sudakov-Fernique inequality we can dominate $(\psi^{\omega,z}_y)_{y \in B_z}$ with a field $(\beta_y)_{y \in B_z}$ of independent centered Gaussians with variance $\overline{g}$ for $\bbQ$-a.e.\ $\omega \in \Omega_{\lambda,\Lambda}$. Then by classical estimates for i.i.d.\ Gaussians we get that $\bbQ$-a.s.\ for every $z \in \bbZ^d$
    \begin{equation}
        \bbE^\omega[ \max_{y \in B_z} \psi^{\omega,z}_y ] \leq \bbE^\omega[ \max_{y \in B_z} \beta_y ] \leq \sqrt{2 \overline{g} \log |B_z|} \,.
    \end{equation}
    We now turn to the lower bound. Fix $\varepsilon >0$ and consider the set
    \begin{equation}
        A^{\omega,\varepsilon}_N (z) = \{ y \in B_z : g^\omega_{U_z}(y,y) \geq \overline{g} - \varepsilon \} \,, \qquad z \in \bbZ^d \,.
    \end{equation}
    By Proposition~\ref{prop:PositivePortionOfHighVariance} $\bbQ$-a.s.\ for every $z \in \cC_N$
    \begin{equation}
        \liminf_{N \to \infty} \frac{|A^{\omega,\varepsilon}_N(z)|}{|B_z|} \geq q/2 >0 \,.
    \end{equation} 
    We now introduce a further parameter $J > 0$ large enough and consider a set $\mathscr{A}^{\omega,\varepsilon}_{J,N}(z)$ of points in $A^\omega_{N,\varepsilon}(z)$ with mutual distance at least $J$ and such that
    \begin{equation}
        |\mathscr{A}^{\omega,\varepsilon}_{J,N}(z)| \geq c |A^{\omega,\varepsilon}_N(z)| J^{-d} \,.
    \end{equation}
    One such set always exists. Note that for every $y,y' \in \mathscr{A}^{\omega,\varepsilon}_{J,N}(z)$ with $y \neq y'$
    \begin{equation}
        \bbE^\omega[ (\psi^{\omega,z}_y - \psi^{\omega,z}_{y'})^2 ] \stackrel{\eqref{eq:QuenchedGFEstimate}}{\geq} 2(\overline{g}-\varepsilon) - 2 c_2 J^{2-d} \,.
    \end{equation}
    Let $(\upsilon_y)_{y \in \mathscr{A}^{\omega,\varepsilon}_{J,N}(z)}$ be a family of independent centered Gaussians with variance $\overline{g}-\varepsilon - c_2 J^{2-d}$. Using again Sudakov-Fernique inequality we have that
    \begin{equation}
        \bbE^\omega [ \max_{y \in B_z} \psi^{\omega,z}_y ] \geq \bbE^\omega [\max_{y \in \mathscr{A}^{\omega,\varepsilon}_{J,N} (z)} \upsilon_y ] \,.
    \end{equation} 
    Dividing by $\sqrt{2\overline{g} \log |B_z|}$ we get
    \begin{equation}
        \label{eq:loweerBoundMaxLocalField}
        \begin{aligned}
            \liminf_{N \to \infty} \frac{ \bbE^\omega[ \max_{y \in \mathscr{A}^{\omega,\varepsilon}_{J,N}(z)} \upsilon_y ] }{\sqrt{2\overline{g} \log |B_z|}} & \geq \liminf_{N \to \infty} \frac{ \bbE^\omega[ \max_{y \in \mathscr{A}^{\omega,\varepsilon}_{J,N}(z)} \upsilon_y ] }{\sqrt{2\overline{g} \log |A^{\omega,\varepsilon}_N(z)|}} \\
            & \geq \liminf_{N \to \infty} \frac{ \bbE^\omega[ \max_{y \in \mathscr{A}^{\omega,\varepsilon}_{J,N}(z)} \upsilon_y ] }{\sqrt{2\overline{g} \log |\mathscr{A}^{\omega,\varepsilon}_{J,N}(z)|}} = \sqrt{\,\overline{g}-\varepsilon - c_2 J^{2-d}} \,,
        \end{aligned}
    \end{equation} 
    having used that for every $z \in \cC_N$
    \begin{equation}
        \lim_{N \to \infty} \frac{\log |B_z|}{\log |A^{\omega,\varepsilon}_N(z)|} =1  \,, \qquad \lim_{N \to \infty} \frac{\log |\mathscr{A}^{\omega,\varepsilon}_{J,N}(z)|}{\log |A^{\omega,\varepsilon}_N(z)|} \geq \lim_{N \to \infty} \frac{ \log |A^{\omega,\varepsilon}_N(z)| - d \log J }{\log |A^{\omega,\varepsilon}_N(z)|} = 1 \,,
    \end{equation}
    where the first limit follows from $d \log L + c \leq \log |A^{\omega,\varepsilon}_N(z)| \leq d \log L + C$. Letting $J \to \infty$ and $\varepsilon \to 0$ in~\eqref{eq:loweerBoundMaxLocalField} we get the claim.
\end{proof}
\begin{remark}\label{rem:MaxGFFErgodic}
    In general, for every stationary and ergodic probability measure $\bbQ'$ on $\Omega_{\lambda,\Lambda}$, on every sequence of sets that grows to $\bbZ^d$ as $N \to \infty$, we have the same behavior as the one in Proposition~\ref{prop:MaxLocalField} for the expectation of the maximum of the Gaussian free field $\varphi$ , namely for $\bbQ'$-a.e.\ $\omega \in \Omega_{\lambda,\Lambda}$ we have the following pointwise convergence:
    \begin{equation}
        \lim_{N \to \infty} \frac{\bbE^\omega[ \max_{y \in D_N} \varphi_y ]}{\sqrt{2\overline{g} \log |D_N|}} =1\,,
    \end{equation}
    where $D_N = (ND)\cap \bbZ^d$ for some open and relatively compact $D\subseteq \bbR^d$.
    The proof is analogous to the one of Proposition~\ref{prop:MaxLocalField}, using that $\overline{g}=\esssup_{\omega} g^\omega(0,0)$.
\end{remark}

\begin{remark}\label{rem:ChoiceL}
    In~\eqref{eq:DefinitionL} we define $L(N)$ strategically so that, for $z \in \cC_N$, since $|B_z|=(2L)^d$, the first order behavior of the maximum of the local field is given by $\sqrt{2d\, \overline{g}\log L}$ (see Proposition~\ref{prop:MaxLocalField}), which matches the asymptotic level $\sqrt{4 \overline{g} \log N}$ of the expectation of the field conditioned on the hard wall event,  $\bbE^\omega[\varphi_\cdot | \cW^+_N]$, on $V_N$, see Theorem~\ref{thm:MainTheorem}, ii).
\end{remark}

\subsubsection{Quenched solidification estimates}
\label{subsec:Solidification}
In this Section we consider a special family of sets $\cS$ that satisfy~\eqref{eq:CollectionC} and have a certain covering property on $V_N$, and provide in Corollary~\ref{cor:SolidificationCapacity} a solidification result for such sets, namely that the capacity (associated with the random walk) of the union of boxes $B_z$ for $z \in \cS$, is asymptotically bounded from below by $\capa^\omega(V^\varepsilon_N)$, as $N \to \infty$, for any $\varepsilon > 0$. We recall that $V^\varepsilon_N = \{ x \in V_N : |x-y| > \varepsilon N,\, y \in V_N^c   \}$ is the $\varepsilon$-bulk of $V_N$. 

Fix $\varepsilon>0$. We consider a length scale $\hat{L}$ with the property that $N/\hat{L}\to \infty$, $\hat{L}/L \to \infty$,
and consider the family $\mathscr{F}$ of boxes $F$ such that
\begin{equation}
    \label{eq:BoxesToCover}
        F = x + [-\hat{L}  ,  \hat{L}]^d \,\text{ for some } x \in V^{\varepsilon/2}_N \cap  \hat{L}\bbZ^d \,.
\end{equation}
Let $\rho\in [0,1)$ be a fixed parameter.
We consider sets $\cS$ with the following covering property:
\begin{equation}
    \label{eq:CoveringProperty}
    \begin{minipage}{0.7\linewidth}
            $\cS \subseteq \cC_N$ satisfies $|F \cap \cS| \geq (1-\rho)|F \cap \cC_N|$, for every $F\in\mathscr{F}$.
    \end{minipage}
\end{equation}
For each such $\cS$ we also define the union of pairwise disjoint boxes
\begin{equation}
    S = \bigcup_{z \in \cS} B_z \,.
\end{equation}

We are now ready to state the two following quenched results on the solidification properties of the collection of all interfaces $S$ with $\cS$ satisfying~\eqref{eq:CoveringProperty}.
\begin{prop}
    \label{prop:SolidificationHittingTime}
    For every $\varepsilon \in (0,1)$ and $\rho \in [0,1)$
    \begin{equation}
        \lim_{N \to \infty} \sup_{\omega \in \Omega_{\lambda,\Lambda}} \sup_{\cS} \sup_{z \in V_N^\varepsilon}P^\omega_z [H_S = \infty] = 0 \,.
    \end{equation}
\end{prop}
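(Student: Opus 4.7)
The plan is to combine a quenched single-scale hitting estimate with an iteration over random-walk excursions at scale $\hat{L}$. The key inputs are the porosity of $S$ enforced by~\eqref{eq:CoveringProperty} and the quenched heat-kernel lower bound~\eqref{eq:HK_bound}, both uniform in $\omega\in\Omega_{\lambda,\Lambda}$.

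\emph{Single-scale hitting lemma.} For $y\in V_N^{3\varepsilon/4}$ and $N$ large (so $\hat{L}\ll N$), the point $x_F=\hat{L}\lfloor y/\hat{L}\rfloor$ lies in $V_N^{\varepsilon/2}\cap\hat{L}\bbZ^d$, which gives a box $F\in\mathscr{F}$ containing $y$. Set $B=B(y,10\hat{L})$. By~\eqref{eq:CoveringProperty}, $|S\cap F|\geq c\,K^{-d}\hat{L}^d$. Applying~\eqref{eq:HK_bound} with $\vartheta=9/10$ and $R=10\hat{L}$, one has $q^\omega_{t,B}(y,y')\geq c_3 t^{-d/2}$ for $y'\in F$ and $c_4(10\hat{L})^2\leq t\leq(10\hat{L})^2$, which integrates to $g^\omega_B(y,y')\geq c\,\hat{L}^{2-d}$ uniformly in $\omega$. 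Writing $\ell=\int_0^{T_B}\IND_{\{X_s\in S\cap F\}}\,\De s$, this yields
\begin{equation*}
E^\omega_y[\ell]=\sum_{y'\in S\cap F}g^\omega_B(y,y')\mu^\omega_{y'}\geq c'\,K^{-d}\hat{L}^2,
\end{equation*}
while the strong Markov property combined with the uniform exit-time bound $\sup_{y'\in B}E^\omega_{y'}[T_B]\leq C\hat{L}^2$ gives $E^\omega_y[\ell^2]\leq 2\sup_{y'}E^\omega_{y'}[\ell]\cdot E^\omega_y[\ell]\leq C\,K^{-d}\hat{L}^4$. Paley--Zygmund then yields $P^\omega_y[H_S<T_B]\geq P^\omega_y[\ell>0]\geq\alpha_K:=c''K^{-d}>0$, uniformly in $\omega\in\Omega_{\lambda,\Lambda}$, $y\in V_N^{3\varepsilon/4}$, $N$ large and $\cS$ satisfying~\eqref{eq:CoveringProperty}.

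\emph{Iteration.} Define $\sigma_0=0$, $\sigma_{k+1}=\inf\{t>\sigma_k:X_t\notin B(X_{\sigma_k},10\hat{L})\}$ and $\tilde\tau=T_{V_N^{3\varepsilon/4}}$. On $\{\sigma_k<\tilde\tau,\,H_S>\sigma_k\}$ we have $X_{\sigma_k}\in V_N^{3\varepsilon/4}$, so applying the single-scale lemma at $X_{\sigma_k}$ via the strong Markov property yields $P^\omega_z[H_S>\sigma_{k+1}\mid\cF_{\sigma_k}]\leq 1-\alpha_K$ there. Iterating, $P^\omega_z[H_S>\sigma_k,\,\sigma_k<\tilde\tau]\leq(1-\alpha_K)^k$, hence
\begin{equation*}
P^\omega_z[H_S=\infty]\leq P^\omega_z[H_S>\tilde\tau]\leq(1-\alpha_K)^k+P^\omega_z[\sigma_k\geq\tilde\tau].
\end{equation*}

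Taking $k=k_N:=\lfloor\eta(N/\hat{L})^2\rfloor$ with $\eta>0$ small, $k_N\to\infty$ since $N/\hat{L}\to\infty$, so $(1-\alpha_K)^{k_N}\to0$. I further split $P^\omega_z[\sigma_{k_N}\geq\tilde\tau]\leq P^\omega_z[\sigma_{k_N}\geq\delta N^2]+P^\omega_z[\tilde\tau<\delta N^2]$: the second term is $o_\delta(1)$ uniformly in $\omega$ and $z\in V_N^\varepsilon$, by Aronson-type Gaussian upper heat-kernel bounds together with $\mathrm{dist}(z,\partial V_N^{3\varepsilon/4})\geq\varepsilon N/4$; the first is handled by Azuma's inequality for the martingale $\sigma_k-\sum_{j<k}E^\omega_z[\sigma_{j+1}-\sigma_j\mid\cF_{\sigma_j}]$ using the quenched bounds $c\hat{L}^2\leq E^\omega_y[T_{B(y,10\hat{L})}]\leq C\hat{L}^2$ and $E^\omega_y[T_{B(y,10\hat{L})}^2]\leq C'\hat{L}^4$ (uniform in $\omega$), which yields $P^\omega_z[\sigma_{k_N}\geq\delta N^2]\to0$ for $\eta$ small enough relative to $\delta$. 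Letting $\delta\to0$ at the end completes the proof. The main delicate point is securing the uniform-in-$\omega$ moment bounds on the excursion increments needed for Azuma; these follow from the classical Barlow-type Gaussian heat-kernel bounds for uniformly elliptic and bounded conductances.
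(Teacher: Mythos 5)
Your argument is correct in substance, but it takes a genuinely different (and heavier) route than the paper in both halves. For the single-scale step, the paper simply evaluates the killed heat kernel at the single time $t=\hat{L}^2$ inside the box $F$ itself, getting $P^\omega_y[H_S\le T_F]\ge c(1-\rho)$ directly from~\eqref{eq:HK_bound} and the covering property~\eqref{eq:CoveringProperty}, with no occupation-time or second-moment computation; your Paley--Zygmund argument in the larger ball $B(y,10\hat L)$ reaches the same uniform lower bound $\alpha_K>0$, at the cost of also needing an upper bound $E^\omega_y[\ell]\le CK^{-d}\hat L^2$ (which requires summing the Green function over the porous set $S\cap F$, a step you assert but do not justify; alternatively the crude bound $E^\omega_y[\ell]\le E^\omega_y[T_B]\le C\hat L^2$ gives $\alpha_K\ge cK^{-2d}$, which is just as good). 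For the iteration, the paper counts \emph{spatial} crossings: any trajectory going from $V_N^\varepsilon$ to the complement of $V_N^{\varepsilon/2}$ must come within $(3/4)\hat L$ of the centers of at least $k_N\asymp N/\hat L$ boxes of $\mathscr{F}$, so the strong Markov property immediately gives $[1-c'(1-\rho)]^{k_N}$ with no control of exit times. You instead count \emph{time} excursions at scale $\hat L$ and must therefore show that $\sigma_{k_N}<\tilde\tau$ typically, which forces you to import uniform exit-time moment bounds, a Gaussian upper bound on $P^\omega_z[\tilde\tau<\delta N^2]$, and a concentration estimate, together with the extra limits $\eta,\delta\to0$; this works, but it is the more delicate part of your write-up. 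Two small repairs are needed there: the choice $\vartheta=9/10$ in~\eqref{eq:HK_bound} gives $(1-\vartheta)\cdot 10\hat L=\hat L$, which does not cover $F\subseteq B(y,2\hat L)$ — take $\vartheta\le 4/5$ (or $R=20\hat L$); and Azuma's inequality requires bounded increments, which the excursion durations are not — with only the second-moment bounds you quote, you should instead use Chebyshev (or a Freedman/Bernstein-type inequality) on the martingale part, which indeed yields $P^\omega_z[\sigma_{k_N}\ge\delta N^2]\to0$ uniformly in $\omega$ once $\eta$ is small relative to $\delta$, since $\hat L/N\to0$. With these adjustments your proof is complete and uniform in $\omega$, $\cS$ and $z$, as required; the paper's deterministic crossing count simply buys a shorter argument that avoids all exit-time estimates.
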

\begin{proof}
    First we note that for $\cS$ as in~\eqref{eq:CoveringProperty}, for every box $F\in \mathscr{F}$ with center $x_F$, by the heat kernel estimates for the killed random walk in~\eqref{eq:HK_bound} we have that for all $y$ such that $|y-x_F|_\infty \leq (3/4) \hat{L}$
    \begin{equation}\label{eq:ProbHitSBeforeLeavingBall}
        \begin{aligned}
            P^\omega_y [H_S \leq T_F] &\geq P^\omega_y [X_{\hat{L}^2} \in S \cap F , \hat{L}^2 \leq T_F] = \sum_{z \in S \cap F}  q^\omega_{\hat{L}^2,F}(y,z)\, \mu^\omega_z \\ & \geq  c |S \cap F| \hat{L}^{-d}\geq c|\cS\cap F| (L/\hat{L})^d \\ &\!\! \stackrel{\eqref{eq:CoveringProperty}} {\geq} c (1-\rho_N) |\cC_N \cap F| (L/\hat{L})^{d}\geq c' (1-\rho) \,,
        \end{aligned}
    \end{equation}
    having used that $\mu^\omega_z$ is uniformly bounded from below, and where $c' \in (0,1)$ is independent of the choices of $F \in \mathscr{F}$, $\omega \in \Omega_{\lambda,\Lambda}$ and $y$ such that $|y-x_F|_\infty \leq (3/4) \hat{L}$.

    The proof of the proposition is now based on the fact that, in order to exit $V_N^{\varepsilon/2}$ starting from inside $V_N^{\varepsilon}$, the random walk must cross and get at $|\cdot|_\infty$-distance $(3/4)\hat{L}$ from the centers of at least $k_N = c (N/\hat{L})$ boxes $F\in\mathscr{F}$.
    As such by a repeated application of the strong Markov property and~\eqref{eq:ProbHitSBeforeLeavingBall} we get that for every $z \in V_N^\varepsilon$
    \begin{equation}
         P^\omega_z [H_S = \infty] \leq [1-c'(1-\rho)]^{k_N} \,.
    \end{equation}
    The claim now follows by taking the supremum over all $\omega \in \Omega_{\lambda,\Lambda}$ and $\cS$ as in~\eqref{eq:CoveringProperty}, and noting that the right-hand side goes to $0$ as $N \to \infty$, since $\rho\in [0,1)$ and $k_N \to \infty$.
    \end{proof}
The proof of the following corollary is analogous to the one of (4.8) of Corollary 4.2 in~\cite{chiarini2021disconnection} and uses the result of Proposition~\ref{prop:SolidificationHittingTime}.
\begin{corollary}
    \label{cor:SolidificationCapacity}
    For every $\varepsilon \in (0,1)$, $\rho \in [0,1)$
    \begin{equation}
        \liminf_{N \to \infty} \inf_{\omega \in \Omega_{\lambda,\Lambda}} \inf_{\cS} \frac{\capa^\omega (S)}{\capa^\omega (V_N^\varepsilon)} \geq 1 \,.
    \end{equation}
\end{corollary}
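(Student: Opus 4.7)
The plan is to follow the classical sweeping-of-equilibrium-measure argument, using Proposition~\ref{prop:SolidificationHittingTime} to guarantee that the sweep does not lose mass in the limit. First I would recall the variational characterization of capacity: if $\mu$ is a positive measure supported on $S$ satisfying $\mu G^\omega(z)\le 1$ for all $z\in S$, then
\[
\mu(S)=\langle \mu,h^\omega_S\rangle=\langle \mu,e^\omega_S G^\omega\rangle=\langle \mu G^\omega, e^\omega_S\rangle\leq \langle 1,e^\omega_S\rangle=\capa^\omega(S),
\]
using $h^\omega_S\equiv 1$ on $S$, symmetry of $G^\omega$, and positivity of $e^\omega_S$.

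Next I would sweep the equilibrium measure of $V_N^\varepsilon$ onto $S$. Define the measure on $S$
\[
\hat e^\omega(y)=\sum_{x\in V_N^\varepsilon} e^\omega_{V_N^\varepsilon}(x)\, P^\omega_x[X_{H_S}=y,\,H_S<\infty],\qquad y\in S.
\]
Since $g^\omega(z,\cdot)$ is $\omega$-superharmonic on $\bbZ^d$ (it is harmonic off $z$ by \eqref{eq:harmonicityOfGreen} and non-negative), the optional stopping theorem at the bounded stopping time $H_S\wedge n$ followed by Fatou yields $E^\omega_x[g^\omega(z,X_{H_S});H_S<\infty]\leq g^\omega(z,x)$ for every $z\in\bbZ^d$. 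Summing against $e^\omega_{V_N^\varepsilon}$ gives
\[
\hat e^\omega G^\omega(z)\leq \sum_{x} e^\omega_{V_N^\varepsilon}(x)\,g^\omega(z,x)=h^\omega_{V_N^\varepsilon}(z)\leq 1.
\]
By the variational bound recalled above, $\hat e^\omega(S)\leq \capa^\omega(S)$.

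Finally I would estimate $\hat e^\omega(S)$ from below by the hitting probability:
\[
\hat e^\omega(S)=\sum_{x\in V_N^\varepsilon} e^\omega_{V_N^\varepsilon}(x)\, P^\omega_x[H_S<\infty]
\;\geq\; \capa^\omega(V_N^\varepsilon)\,\Big(1-\sup_{x\in V_N^\varepsilon}P^\omega_x[H_S=\infty]\Big),
\]
since $e^\omega_{V_N^\varepsilon}$ is supported on $V_N^\varepsilon$ and has total mass $\capa^\omega(V_N^\varepsilon)$. Proposition~\ref{prop:SolidificationHittingTime} ensures that $\sup_{\omega}\sup_{\cS}\sup_{x\in V_N^\varepsilon}P^\omega_x[H_S=\infty]\to 0$ as $N\to\infty$, so dividing by $\capa^\omega(V_N^\varepsilon)$ and taking $\liminf_N$ with the infima over $\omega$ and $\cS$ yields the claim.

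The only mild subtlety is that the boxes $B_z$ indexed by $\cS\subseteq\cC_N$ need not be disjoint from $V_N^\varepsilon$, so points $x\in V_N^\varepsilon\cap S$ have $H_S=0$; but then $P^\omega_x[X_{H_S}=x]=1$ and the superharmonicity bound $E^\omega_x[g^\omega(z,X_{H_S});H_S<\infty]\le g^\omega(z,x)$ still trivially holds with equality, so the argument goes through unchanged. No step is a serious obstacle once Proposition~\ref{prop:SolidificationHittingTime} is in hand; the solidification content is entirely in that hitting-probability estimate, and the present corollary is a clean potential-theoretic repackaging.
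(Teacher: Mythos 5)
Your proof is correct and follows essentially the same potential-theoretic route as the paper, whose proof (deferred to Corollary 4.2 of the cited disconnection paper) likewise reduces the claim to Proposition~\ref{prop:SolidificationHittingTime} via the bound $\capa^\omega(S)\ \ge\ \langle e^\omega_{V_N^\varepsilon}, h^\omega_S\rangle\ \ge\ \capa^\omega(V_N^\varepsilon)\,\bigl(1-\sup_{x\in V_N^\varepsilon}P^\omega_x[H_S=\infty]\bigr)$. Two small remarks: the middle inequality can be had without the swept measure, since $\langle e^\omega_{V_N^\varepsilon}, h^\omega_S\rangle=\langle h^\omega_{V_N^\varepsilon}, e^\omega_S\rangle\le \capa^\omega(S)$ by symmetry of $g^\omega$ and $h^\omega_{V_N^\varepsilon}\le 1$; and your parenthetical justification of superharmonicity of $g^\omega(z,\cdot)$ (``harmonic off $z$ and non-negative'') is not a valid general implication, though the needed bound $E^\omega_x[g^\omega(z,X_{H_S});H_S<\infty]\le g^\omega(z,x)$ follows directly from~\eqref{eq:DecompositionGreenKilledGreen} with $U=S^c$.
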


\subsubsection{Proof of the quenched upper bound for the decay of the hard wall event probability}
\label{subsec:ProofUBHW}
In this section we finish the proof of the upper bound for the asymptotic decay rate, as $N \to \infty$, of the probability of the hard wall event $\cW^+_N$.
We now sketch the main steps of the proof.
We first define the concept of bad and good boxes, and show in Proposition~\ref{prop:NegligibleEvent} that we can $\bbQ$-a.s.\ neglect the event on which the set of vertices indexing good boxes does not satisfy the covering property~\eqref{eq:CoveringProperty}, hence we can assume that it is satisfied for $\bbQ$-a.e.\ environment. The proof is concluded in Proposition~\ref{prop:UpperBoundHW}, where we note that on the intersection of this event with $\cW^+_N$, in each good box there is at least one point where the harmonic average of the field is above the threshold $\sqrt{4\overline{g} \log N}$. We encode this event into the supremum of some linear combinations, which are the objects we considered in Section~\ref{sec:PrelimGaussianFunctional} (see~\eqref{eq:ZfDefinition} and~\eqref{eq:ZDefinition}). Via the results we previously obtained on these variables, we get an estimate for the exact constant for the decay of $\bbP^\omega[\cW^+_N]$ in terms of the capacity of the union of good boxes. Lastly, we use the solidification results of Section~\ref{subsec:Solidification} and the homogenization result to complete the proof.

We say that a vertex $z \in \bbZ^d$ is \textit{good} at level $a$ (or, equivalently, that $B_z$ is good at level $a$) if
\begin{equation}
    \label{eq:DefinitionBad}
    \min_{y \in B_z} \psi_y^{\omega,z} \leq -\sqrt{a \log N}
\end{equation}
and we say that $z$ is \textit{bad} at level $a$ (or, equivalently, that $B_z$ is bad at level $a$) otherwise.
Note that if $z$ is good at level $a$ and $\varphi$ is non-negative on $B_z$, then $\max_{y \in B_z} \xi^{\omega,z}_y \geq \sqrt{a \log N}$.
For $\delta \in (0,1)$ we define
\begin{equation}
    \label{eq:DefinitionA}
    a_\delta = 4 \delta \overline{g}  \,.
\end{equation}

\begin{prop}
    \label{prop:NegligibleEvent}
    Recall $\cC_N$ as in~\eqref{eq:DefinitionCN}. For every $\delta \in (0,1)$ we have that for $\bbQ$-a.e.\ $\omega \in \Omega_{\lambda,\Lambda}$
    \begin{equation}
        \label{eq:NegligibleEvent}
        \limsup_{N \to \infty} \frac{1}{N^{d-2}\log N} \log \bbP^\omega \left[
            \begin{minipage}{0.35\textwidth}
                \begin{center}
                $\{ z \in \cC_N: z $ is good at level $ a_\delta \}$\\ does not satisfy~\eqref{eq:CoveringProperty} with $\hat{L} = N/(\log N)^{1/4d}$ and $\rho = 1/2$
            \end{center}
        \end{minipage}
        \right] = - \infty \,,
    \end{equation}
\end{prop}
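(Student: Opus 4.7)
The plan is to reduce the claim to a Bernoulli concentration argument, relying on two structural facts. First, since consecutive elements of $\cC_N$ are at $\ell^\infty$-distance $4KL$ while $U_z=z+[-KL,KL)^d$ has side length $2KL$, the domains $(U_z)_{z\in\cC_N}$ are pairwise disjoint; by the Markov property~\eqref{eq:DomainMP} the local fields $(\psi^{\omega,z})_{z\in\cC_N}$ are therefore independent, and so are the events $\{z\text{ is good at level }a_\delta\}$ across $z\in\cC_N$. Second, the expected minimum of $\psi^{\omega,z}$ on $B_z$ lies strictly below the threshold $-\sqrt{a_\delta\log N}$ by an amount of order $\sqrt{\log N}$, so Borell--TIS will yield a polynomially small per-box bad probability.

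First I would combine Proposition~\ref{prop:MaxLocalField} with $|B_z|=(2L)^d$ and $L\sim N^{2/d}(\log N)^{-1/2d}$, which give $\log|B_z|=2\log N+O(\log\log N)$, to obtain that, on a full-$\bbQ$-measure set of $\omega$, uniformly in $z\in\cC_N$,
\[
\bbE^\omega\Big[\max_{y\in B_z}(-\psi^{\omega,z}_y)\Big]\;=\;\bbE^\omega\Big[\max_{y\in B_z}\psi^{\omega,z}_y\Big]\;=\;\sqrt{4\overline{g}\log N}\,(1+o(1))
\]
by symmetry of $\psi^{\omega,z}$. Since $a_\delta=4\delta\overline{g}$ with $\delta<1$, the gap $\bbE^\omega[\max(-\psi^{\omega,z})]-\sqrt{a_\delta\log N}=(1-\sqrt{\delta})\sqrt{4\overline{g}\log N}(1+o(1))$ diverges. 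Applying Borell--TIS to the centered Gaussian field $-\psi^{\omega,z}$ on $B_z$, whose pointwise variance is bounded by $g^\omega_{U_z}(y,y)\leq\overline{g}$ uniformly in $\omega$, yields for all $N$ large enough
\[
p_N\;:=\;\sup_{z\in\cC_N}\bbP^\omega\big[z\text{ is bad at level }a_\delta\big]\;\leq\;2\exp\!\Big(-\tfrac{(1-\sqrt{\delta})^2\cdot 4\overline{g}\log N}{2\overline{g}}(1+o(1))\Big)\;\leq\;N^{-\kappa_\delta}
\]
for any fixed $\kappa_\delta<2(1-\sqrt{\delta})^2$.

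Next I would count the scales. From $\hat{L}=N(\log N)^{-1/4d}$ and the spacing $4KL$, each $F\in\mathscr{F}$ contains $n_F:=|F\cap\cC_N|\sim c K^{-d}N^{d-2}(\log N)^{1/4}$ vertices of $\cC_N$, while $|\mathscr{F}|\leq (2N/\hat{L})^d=O((\log N)^{1/4})$. By independence, the number of bad $z$'s inside $F$ is stochastically dominated by $\mathrm{Bin}(n_F,p_N)$, so for $N$ large enough that $p_N<1/4$,
\[
\bbP^\omega\Big[\big|\{z\in F\cap\cC_N:z\text{ bad}\}\big|\geq\tfrac12 n_F\Big]\;\leq\;\binom{n_F}{\lceil n_F/2\rceil}p_N^{n_F/2}\;\leq\;(4p_N)^{n_F/2}\;\leq\;\exp\!\big(-\tfrac{\kappa_\delta}{3}\,n_F\log N\big).
\]
Union-bounding over $F\in\mathscr{F}$ and dividing by $N^{d-2}\log N$,
\[
\frac{1}{N^{d-2}\log N}\log\bbP^\omega\big[\text{\eqref{eq:CoveringProperty} fails for the set of good vertices}\big]\;\leq\;\frac{O(\log\log N)}{N^{d-2}\log N}-c'\kappa_\delta K^{-d}(\log N)^{1/4}(1+o(1))\;\longrightarrow\;-\infty,
\]
which is~\eqref{eq:NegligibleEvent}.

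The only mildly delicate point is the uniform-in-$z$ control of the expectation in the first step; this is exactly what Proposition~\ref{prop:MaxLocalField} provides for $\bbQ$-typical $\omega$, and the deterministic upper bound $g^\omega_{U_z}(y,y)\leq\overline{g}$ makes Borell--TIS apply uniformly. Everything else is a standard Bernoulli large-deviation estimate; the decisive quantitative observation is that the exponent $n_F\log N\sim K^{-d}N^{d-2}(\log N)^{5/4}$ exceeds the required speed $N^{d-2}\log N$ by a factor $(\log N)^{1/4}\to\infty$, which is precisely why the intermediate mesoscopic scale $\hat{L}=N(\log N)^{-1/4d}$ was chosen.
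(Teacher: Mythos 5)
Your proposal is correct and follows essentially the same route as the paper: a per-box bad probability that is polynomially small in $N$ (via Proposition~\ref{prop:MaxLocalField} and Borell--TIS, using the variance bound $g^\omega_{U_z}(y,y)\leq \overline{g}$, which holds $\bbQ$-a.s.), independence of the local fields over the disjoint $U_z$, a binomial tail estimate within each $F\in\mathscr{F}$, and a union bound over the $O((\log N)^{1/4})$ boxes, with the decisive speed $|F\cap\cC_N|\log N\sim K^{-d}N^{d-2}(\log N)^{5/4}$. The only differences are cosmetic (you use the crude bound $\binom{n}{\lceil n/2\rceil}p^{n/2}\leq (4p)^{n/2}$ in place of the paper's Chernoff-entropy form, and you track a slightly sharper exponent $\kappa_\delta$), so no changes are needed.
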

\begin{proof}
    Fix $\delta \in (0,1)$, and let $\hat{L} = N/(\log N)^{1/4d}$ and $\rho = 1/2$.
    We first provide a uniform upper bound for the probability that a vertex $z \in \bbZ^d$ is bad at level $a_\delta$.
    Let
    \begin{equation}
        \label{eq:DefinitionProbBad}
        \eta^\omega_\delta = \sup_{z\in \cC_N} \bbP^\omega [ \min_{y \in B_z} \psi^{\omega,z}_y > -\sqrt{a_\delta \log N} ] \,.
    \end{equation}
    We claim that
    \begin{equation}
        \label{eq:ClaimProbBad}
        \eta^\omega_\delta \leq 2 N^{-\delta'}\,,
    \end{equation}
    with  $\delta'(\delta) > 0$.
    To show this, we first note that by the definition of $L$ in~\eqref{eq:DefinitionL} we have for $N$ large enough
    \begin{equation}\label{eq:LowerBoundExpectationMaxLocalField}
        \sqrt{2 \overline{g} \log |B_z|} \geq \frac{1+\sqrt{\delta}}{2}\sqrt{4  \overline{g} \log N} \,,
    \end{equation}
 therefore by the uniform behavior of the expected maximum of the local field over boxes $B_z$, $z\in \cC_N$, in Proposition~\ref{prop:MaxLocalField} we get for $\bbQ$-a.e.\ $\omega \in \Omega_{\lambda,\Lambda}$ and $N$ large enough
    \begin{equation}
        \begin{aligned}
            \eta^\omega_\delta & = \sup_{z\in \cC_N} \bbP^\omega \Big[ \max_{y \in B_z} \psi^{\omega,z}_y - \bbE^\omega [\max_{y \in B_z} \psi^{\omega,z}_y] \leq \sqrt{4\delta \overline{g} \log N} - \bbE^\omega [\max_{y \in B_z} \psi^{\omega,z}_y]\Big]  \\&\!\! \stackrel{\eqref{eq:MaxLocalField}}{\leq}\sup_{z\in \cC_N}\bbP^\omega \Big[\big|\max_{y \in B_z} \psi^{\omega,z}_y - \bbE^\omega [\max_{y \in B_z} \psi^{\omega,z}_y]\big| \geq \frac{1 - \sqrt{\delta}}{2} \sqrt{4  \overline{g} \log N} \Big]
        \end{aligned}
    \end{equation}
    and we have~\eqref{eq:ClaimProbBad} by Borell-TIS inequality.
    Note that the number $|\mathscr{F}|$ of boxes as in~\eqref{eq:BoxesToCover} satisfies
    \begin{equation}
        \lim_{N \to \infty} \frac{\log |\mathscr{F}|}{N^{d-2}\log N} =0 \,.
    \end{equation}
    Then, by a union bound the claim~\eqref{eq:NegligibleEvent} readily follows if we show that
    \begin{equation}
        \limsup_{N \to \infty} \frac{1}{N^{d-2}\log N} \, \log \, \widetilde{\sup} \, \bbP^\omega \Big[ \sum_{z \in F\cap \cC_N} \IND_{ \{ z \text{ is bad at level }a_\delta \} } \geq \rho |F\cap \cC_N| \Big] = - \infty \,,
    \end{equation}
    where $\widetilde{\sup}$ is the supremum on the set of boxes $F\in \mathscr{F}$ as in~\eqref{eq:BoxesToCover}. We note that  $\IND_{\{ z \text{ is bad at level }a_\delta \}}$, $z\in F\cap \cC_N$ are independent Bernoulli random variables with parameter at most $\eta_\delta^\omega$, as the local fields $\psi^{\omega,z}$, $z\in F\cap \cC_N$ are independent. Using standard exponential estimates on the sum of independent Bernoulli random variables we get
    \begin{equation}
        \begin{aligned}
        \bbP^\omega \Big[ \sum_{z \in F\cap \cC_N} &\IND_{ \{ z \text{ is bad at level }a_\delta \} } \geq \rho |F\cap \cC_N| \Big] \\ &\leq \exp \Big\{ - |F\cap \cC_N| \Big[ \rho \log \frac{\rho}{\eta^\omega_\delta} + (1-\rho) \log \frac{1-\rho}{1-\eta^\omega_\delta} \Big] \Big\} \,.
        \end{aligned}
    \end{equation}
    This yields 
    \begin{equation}
        \begin{aligned}
            \limsup_{N \to \infty} & \frac{1}{N^{d-2}\log N} \, \log \, \widetilde{\sup} \, \bbP^\omega \Big[ \sum_{z \in F\cap \cC_N} \IND_{ \{ z \text{ is bad at level }a_\delta \} } \geq \rho |F\cap \cC_N| \Big] \\
            & \leq - \liminf_{N \to \infty} \widetilde{\inf} \frac{c |F\cap \cC_N|}{N^{d-2}\log N}   \log (1/{\eta^\omega_\delta}) \\
            &\leq  - \liminf_{N \to \infty} \frac{c (\hat{L}/L)^d}{N^{d-2} \log N} \log N = -\infty \,,
        \end{aligned}
    \end{equation}
   having used that $|F\cap \cC_N| \geq c (\hat{L}/L)^d$, $\hat{L} = N/(\log N)^{1/4d}$ and \eqref{eq:ClaimProbBad}.
\end{proof}
We are now ready to use the results of Section~\ref{sec:PrelimGaussianFunctional}, Section~\ref{subsec:Solidification} and the homogenization property~\eqref{eq:hom_capacity} to prove in the following proposition the quenched asymptotic upper bound for the decay rate of the probability of the hard wall event as $N \to \infty$.
\begin{prop}
    \label{prop:UpperBoundHW}
    For $\bbQ$-a.e.\ $\omega \in \Omega_{\lambda,\Lambda}$
    \begin{equation}
        \label{eq:UpperBoundHWClaim}
        \limsup_{N \to \infty} \frac{1}{N^{d-2}\log N} \log \bbP^\omega [\cW^+_N] \leq -2 \overline{g} \,\capa^{\mathrm{hom}}(V) \,.
    \end{equation}
\end{prop}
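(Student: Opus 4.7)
\emph{Proof plan.} The plan is to implement the coarse-graining strategy sketched before the statement. Fix $\delta \in (0,1)$, $\varepsilon \in (0,1)$, and $K \geq 100 d$, and set $a_\delta = 4\delta \overline{g}$. Let $\cS_N \subseteq \cC_N$ denote the (random) set of $z \in \cC_N$ such that $B_z$ is good at level $a_\delta$, and let $G_N$ be the event that $\cS_N$ satisfies~\eqref{eq:CoveringProperty} with $\hat L = N/(\log N)^{1/(4d)}$ and $\rho = 1/2$. By Proposition~\ref{prop:NegligibleEvent}, $\bbP^\omega[G_N^c]$ decays faster than $\exp(-c N^{d-2}\log N)$ for every $c > 0$ and $\bbQ$-a.e.\ $\omega$, so it suffices to bound $\bbP^\omega[\cW^+_N \cap G_N]$.

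On $\cW^+_N \cap G_N \cap \{\cS_N = \cS\}$, for each $z \in \cS$ choose $f(z) \in B_z$ realizing $\min_{y \in B_z} \psi^{\omega,z}_y$. Goodness at level $a_\delta$ yields $\psi^{\omega,z}_{f(z)} \leq -\sqrt{a_\delta\log N}$, and $\cW^+_N$ forces $\varphi_{f(z)} \geq 0$; since $\varphi = \xi^{\omega,z} + \psi^{\omega,z}$ on $U_z$, this gives $\xi^{\omega,z}_{f(z)} \geq \sqrt{a_\delta\log N}$, and therefore $\sup_{f \in \cF(\cS)} Z^\omega_f \geq \sqrt{a_\delta\log N}$. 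Decomposing over the value of $\cS_N$ and using that the number of valid $\cS$ is at most $2^{|\cC_N|} \leq \exp(C K^{-d}N^{d-2}\sqrt{\log N})$ by~\eqref{eq:CardinalityCN}, one obtains
\begin{equation*}
\bbP^\omega[\cW^+_N \cap G_N] \leq 2^{|\cC_N|}\, \sup_{\cS} \bbP^\omega\Bigl[\sup_{f\in\cF(\cS)} Z^\omega_f \geq \sqrt{a_\delta \log N}\Bigr],
\end{equation*}
where the supremum is over $\cS \subseteq \cC_N$ satisfying~\eqref{eq:CoveringProperty}.

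To this I apply Corollary~\ref{cor:UpperBoundProbZf} with $\beta=0$, $b=1$, $\Delta = \sqrt{a_\delta\log N}$. The solidification Corollary~\ref{cor:SolidificationCapacity} combined with~\eqref{eq:QuenchedBoxCapacityEstimate} applied to a ball inside $V^\varepsilon_N$ gives $\capa^\omega(S) \geq c N^{d-2}$ uniformly in valid $\cS$; together with~\eqref{eq:CardinalityCN} this makes the correction $K^{-c_8}\sqrt{|\cS|/\capa^\omega(S)}$ of order $K^{-c_8-d/2}(\log N)^{1/4} = o(\Delta)$. Hence, uniformly in $\omega\in\Omega_{\lambda,\Lambda}$ and valid $\cS$,
\begin{equation*}
\bbP^\omega\Bigl[\sup_{f\in\cF(\cS)} Z^\omega_f \geq \sqrt{a_\delta\log N}\Bigr] \leq 2\exp\Bigl[-\tfrac{(1-o_N(1))\,a_\delta \log N}{2(1+U(K,L))}\,\capa^\omega(S)\Bigr].
\end{equation*}
Using $\capa^\omega(S) \geq (1-o_N(1))\capa^\omega(V_N^\varepsilon)$ from Corollary~\ref{cor:SolidificationCapacity}, the extension of~\eqref{eq:hom_capacity} to the open bounded strongly regular set $V^\varepsilon$ (or, if needed, to a Lipschitz approximation from inside), and the fact that $2^{|\cC_N|} = e^{o(N^{d-2}\log N)}$, one deduces for $\bbQ$-a.e.\ $\omega$
\begin{equation*}
\limsup_{N\to\infty} \frac{\log \bbP^\omega[\cW^+_N]}{N^{d-2}\log N} \leq -\frac{a_\delta}{2(1+U(K,L))}\,\capa^{\mathrm{hom}}(V^\varepsilon).
\end{equation*}

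Finally, sending $L\to\infty$ then $K\to\infty$ drives $U(K,L)\to 0$ by~\eqref{eq:UDecay}; sending $\varepsilon\to 0$ gives $\capa^{\mathrm{hom}}(V^\varepsilon)\to\capa^{\mathrm{hom}}(V)$ since $V^\varepsilon \nearrow V$ and the Dirichlet capacity is continuous from below along increasing sequences of open sets; and $\delta\to 1$ yields $a_\delta \to 4\overline{g}$. The resulting rate is $-2\overline{g}\,\capa^{\mathrm{hom}}(V)$, which is~\eqref{eq:UpperBoundHWClaim}. The main obstacle throughout is maintaining uniformity in $\cS$: both Corollary~\ref{cor:UpperBoundProbZf} and Corollary~\ref{cor:SolidificationCapacity} must apply uniformly in $\cS$ (and in $\omega$) so that the $2^{|\cC_N|}$ union-bound factor is absorbed into an $e^{o(N^{d-2}\log N)}$ correction—this is precisely why these corollaries are formulated with a supremum, respectively infimum, over $\cS$.
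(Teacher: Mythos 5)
Your proposal is correct and follows essentially the same route as the paper's proof: the inclusion of $\cW^+_N$ (on the complement of the negligible event from Proposition~\ref{prop:NegligibleEvent}) into the event $\{\sup_{f\in\cF}Z^\omega_f \geq \sqrt{a_\delta\log N}\}$, a union bound over the at most $2^{|\cC_N|}=e^{o(N^{d-2}\log N)}$ admissible $\cS$, the uniform Gaussian tail bound of Corollary~\ref{cor:UpperBoundProbZf} with $\beta=0$, $b=1$, the solidification estimate of Corollary~\ref{cor:SolidificationCapacity} together with \eqref{eq:CardinalityCN} to kill the correction term and pass to $\capa^\omega(V_N^\varepsilon)$, the homogenization \eqref{eq:hom_capacity}, and finally $K\to\infty$, $\varepsilon\to 0$, $\delta\to 1$. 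The only cosmetic differences (choosing $f(z)$ at the argmin of the local field rather than quoting the max of the harmonic average, and spelling out continuity from below of the capacity as $\varepsilon\to 0$) do not change the argument.
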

\begin{proof}   
    Let $\delta \in (0,1)$ and recall $a_\delta$ as in~\eqref{eq:DefinitionA}. Consider the sets
    \begin{equation}
        \label{eq:DefinitionS}
        \cS^\omega_\delta = \{ z \in \cC_N: z \text{ is good at level } a_\delta \} \,, \qquad S^\omega_\delta = \bigcup_{z \in \cS^\omega} B_z \,.
    \end{equation}
    We start noticing, in view of~\eqref{eq:DefinitionBad}, that 
    \begin{equation}\label{eq:InclusionHardWallGoodBoxes}
        \cW^+_N \subseteq \bigcap_{z\in \cS^\omega_\delta}\{ \varphi_x \geq 0 \text{ for all } x \in B_z \} \subseteq \bigcap_{z\in \cS^\omega_\delta}\Big\{ \max_{y \in B_z} \xi^{\omega,z}_y \geq \sqrt{a_\delta \log N} \Big\} \,,
    \end{equation}
    As a consequence of Proposition~\ref{prop:NegligibleEvent}, by discarding an event of negligible probability, we can assume that $\cS^\omega_\delta$ satisfies~\eqref{eq:CoveringProperty} with $\hat{L} = N/(\log N)^{1/4d}$ and $\rho = 1/2$, for $\bbQ$-a.e.\ $\omega \in \Omega_{\lambda,\Lambda}$.
    The number of possible choices $\cS\subseteq \cC_N$ for $\cS^\omega_\delta$ with this property is at most the number of subsets of $\cC_N$, which is $2^{|\cC_N|}$. By~\eqref{eq:CardinalityCN} we have that this number is of order $\exp\{o(N^{d-2}\log N)\}$.

    Thus, with a union bound,  we can write
    \begin{equation}\label{eq:UpperBoundHWStart}
        \varlimsup_{N \to \infty} \frac{1}{N^{d-2}\log N} \log \bbP^\omega [\cW^+_N] \leq \varlimsup_{N \to \infty} \sup_{\cS}\frac{1}{N^{d-2}\log N} \log  \bbP^\omega \left[ \bigcap_{z\in \cS}\Big\{ \max_{y \in B_z} \xi^{\omega,z}_y \geq \sqrt{a_\delta \log N} \Big\} \right] \,.
    \end{equation}
    where the supremum is over all $\cS \subseteq \cC_N$ satisfying~\eqref{eq:CoveringProperty} with $\hat{L} = N/(\log N)^{1/4d}$ and $\rho = 1/2$.
    Consider the Gaussian functional $Z_f^\omega$ defined as in~\eqref{eq:ZfDefinition} for a set of vertices $\cS$ as above, that is,
    \begin{equation}
        Z_f^\omega = \sum_{z \in \cS} \nu^\omega(z) \xi^{\omega,z}_{f(z)} \,, \qquad f \in \cF \,,
    \end{equation}
    and $Z^\omega$ as in~\eqref{eq:ZDefinition}. 
    We notice that the event on the right-hand side of \eqref{eq:UpperBoundHWStart} is contained in the event $\{ Z^\omega \geq \sqrt{a_\delta \log N} \}$, and thus we can further bound the right-hand side of~\eqref{eq:UpperBoundHWStart} as
    \begin{equation}\label{eq:UpperBoundHWEncodingInZ}
        \varlimsup_{N \to \infty} \frac{1}{N^{d-2}\log N} \log \bbP^\omega [\cW^+_N] \leq \varlimsup_{N \to \infty} \sup_{\cS}\frac{1}{N^{d-2}\log N} \log \bbP^\omega \left[ Z^\omega \geq \sqrt{a_\delta \log N} \right] \,.
    \end{equation}
    Using Corollary~\ref{cor:UpperBoundProbZf} (note that for $\beta=0$ and $b=1$ it holds that $\sup_{f \in \cF}Z^\omega
    _{f,\beta,b}=Z^\omega$), the right-hand side of the above display is bounded  by
    \begin{equation}\begin{aligned}
          - \varliminf_{N\to\infty}\inf_{\cS} \frac{1}{2} \left( \sqrt{a_\delta } - \frac{c_7}{K^{c_8}} \sqrt{\frac{|\cS|}{\capa^\omega(S)\log N}} \right)_+^2 \frac{\capa^\omega(S)}{N^{d-2}} \frac{1}{1+U(K,L)} \,,
    \end{aligned}
    \end{equation}
    with $\lim_{K \to \infty} \limsup_{L \to \infty} U(K,L) = 0$. Note that for every $\cS$ satisfying~\eqref{eq:CoveringProperty} we have by Corollary~\ref{cor:SolidificationCapacity} and \eqref{eq:CardinalityCN} that $\bbQ$-a.s.
    \begin{equation}\label{eq:RelationcSCapS}
        \varlimsup_{N \to \infty} \sup_{\omega \in \Omega_{\lambda,\Lambda}} \sup_{\cS} \frac{|\cS|}{\capa^\omega(S) \log N} \leq  \varlimsup_{N \to \infty} \frac{c N^{d-2}}{\capa^\omega(V_N^{\varepsilon}) \sqrt{\log N}} = 0 \,.
    \end{equation}

    Plugging~\eqref{eq:RelationcSCapS} into the previous estimate and using in order the solidification estimate in Corollary~\ref{cor:SolidificationCapacity} and the homogenization result in~\eqref{eq:hom_capacity}, we get that for every $\varepsilon \in (0,1)$
    \begin{equation}
        \begin{aligned}
        \varlimsup_{N \to \infty} \frac{1}{N^{d-2}\log N} \log \bbP^\omega [ \cW^+_N ] &\leq - \varliminf_{N \to \infty}  \frac{1}{2}  \frac{a_\delta \, \capa( V_N^\varepsilon)}{N^{d-2}} \frac{1}{1 + U(K,L)} \,, \\
        & \leq - 2 \delta \overline{g}\, \capa^{\mathrm{hom}}(V^\varepsilon)\, \big(1+\varlimsup_{L\to \infty} U(K,L)\big)^{-1}\,,
    \end{aligned}
    \end{equation} 
    where we used that $a_\delta = 4 \delta \overline{g}$ and where we assumed without loss of generality that $V^\varepsilon$ has Lipschitz boundary in the application of the homogenization~\eqref{eq:hom_capacity}.
    We have the claim~\eqref{eq:UpperBoundHWClaim} by letting $K \to \infty$, $\varepsilon \to 0$ and $\delta \to 1$. 
\end{proof}

\begin{proof}[Proof of Theorem~\ref{thm:MainTheorem}, i)]
    The claim  now simply follows by combining the lower bound in Proposition~\ref{prop:LowerBoundHW} and the upper bound in Proposition~\ref{prop:UpperBoundHW}.
\end{proof}

\section{Expectation of the conditioned field}
\label{sec:ExpConditionedField}
In this section we get some key estimates for the expectation of the field conditioned on $\cW^+_N$, that are also used in Section~\ref{sec:ProofEntropicRepulsion} for the proof of the pathwise entropic repulsion, and prove the characterization of its asymptotic behavior, as in Theorem~\ref{thm:MainTheorem}, ii). The pivotal estimates for the conditioned expectation are provided in Section~\ref{subsec:PreliminariesExpConditionedField}, while Section~\ref{subsec:ExpCF} is devoted to the proof of its asymptotics. 
\subsection{Preliminary estimates for the expectation of the conditioned field}
\label{subsec:PreliminariesExpConditionedField}
Given $\phi \in \bbR^{\bbZ^d}$, we define the field of weighted averages of its neighboring entries
\begin{equation}\label{eq:DefM}
    m^\omega_x(\phi) = \frac{1}{\mu^\omega_x} \sum_{y \,: \, y \sim x} \omega_{x,y} \phi_y \,, \qquad x \in \bbZ^d \,.
\end{equation}
Note that when $\varphi$ is the Gaussian free field we can rewrite \eqref{eq:DLR_GFF} for $\bbP^\omega$ as
    \begin{equation}\label{eq:DLR2_GFF}
        \bbP^\omega(\De \phi | \cF_{\{x\}^c})(\varphi) = \frac{1}{\sqrt{2\pi / \mu^\omega_x}} \exp \left\{ - \frac{\mu^\omega_x}{2} \big(\phi_x - m^\omega_x(\varphi)\big)^2 \right\} \De \phi_x \prod_{y \neq x}\delta_{\varphi_y}(\De \phi_y) \,,\quad \bbP^\omega(\De \varphi)\text{-a.s.}
    \end{equation}
Consider the law of the conditioned field
\begin{equation}\label{eq:LawCF}
    \bbP^{\omega,+}_N [\,\cdot\,] = \bbP^\omega [ \,\cdot\, | \cW^+_N]
\end{equation}
and its associated expectation $\bbE^{\omega,+}_N $.
The following lemma implies that the field of conditioned expectations $\bbE^{\omega,+}_N[\varphi_x]$, $x \in \bbZ^d$, is a \emph{superharmonic} function. In addition to its use in this section, this will be instrumental also in Section~\ref{sec:ProofEntropicRepulsion} for the proof of Theorem~\ref{thm:MainTheorem}, iii). The proof of the lemma exploits~\eqref{eq:DLR2_GFF} and is a simple adaptation of the one of Lemma 2.3 in~\cite{deuschel1999entropic} to our setting. We write it in Appendix~\ref{secapp:ProofEstimatesExpectationCF} for the convenience of the reader.
\begin{lemma}\label{lem:SuperharmonicConditionedExpectation}
    For every $\omega \in \Omega_{\lambda,\Lambda}$, $x \in \bbZ^d$ and $N \in \bbN$
    \begin{equation}
        \bbE^{\omega,+}_N [\varphi_x - m^\omega_x(\varphi)] = \begin{cases}
            \frac{1}{\sqrt{2\pi \mu^\omega_x}} \bbE^{\omega,+}_N \bigg[ \frac{\exp \left(-\frac{\mu^\omega_x}{2} m^\omega_x(\varphi)^2 \right)}{\phi(m^\omega_x(\varphi) \sqrt{\mu^\omega_x})} \bigg] \,, &\mbox{ for } x \in V_N \,, \\
            0 \,, &\mbox{ for } x \in V_N^c \,,
        \end{cases}
    \end{equation}
    where $\phi$ is the standard normal cumulative distribution function.
\end{lemma}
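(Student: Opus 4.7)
The plan is to condition on $\cF_{\{x\}^c}$ and use the one-site DLR formula \eqref{eq:DLR2_GFF}, which states that under $\bbP^\omega$ the conditional law of $\varphi_x$ given $\cF_{\{x\}^c}$ is the one-dimensional Gaussian with mean $m^\omega_x(\varphi)$ and variance $1/\mu^\omega_x$. The crucial structural observation is that $\cW^+_N$ is $\cF_{V_N}$-measurable, so when $x \notin V_N$ the event $\cW^+_N$ itself is $\cF_{\{x\}^c}$-measurable, whereas when $x \in V_N$ we can split
\begin{equation*}
    \IND_{\cW^+_N} = \IND_{\{\varphi_x \geq 0\}} \cdot \IND_{A_x},\qquad A_x := \{\varphi_y \geq 0 \text{ for all }y \in V_N\setminus\{x\}\},
\end{equation*}
with $\IND_{A_x}$ being $\cF_{\{x\}^c}$-measurable.

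For $x \in V_N^c$, the first case is immediate: conditioning on $\cF_{\{x\}^c}$ and using the DLR equation gives $\bbE^\omega[\varphi_x - m^\omega_x(\varphi)\,|\,\cF_{\{x\}^c}] = 0$, and since $\IND_{\cW^+_N}$ is $\cF_{\{x\}^c}$-measurable one concludes by the tower property after dividing by $\bbP^\omega[\cW^+_N]$.

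For $x \in V_N$, I would write
\begin{equation*}
    \bbE^{\omega,+}_N[\varphi_x - m^\omega_x(\varphi)] = \frac{1}{\bbP^\omega[\cW^+_N]}\, \bbE^\omega\!\Big[\IND_{A_x}\, \bbE^\omega\!\big[(\varphi_x - m^\omega_x(\varphi))\IND_{\{\varphi_x\geq 0\}}\,\big|\,\cF_{\{x\}^c}\big]\Big],
\end{equation*}
and evaluate the inner conditional expectation using \eqref{eq:DLR2_GFF}. The change of variables $u = \phi_x - m^\omega_x(\varphi)$ in the resulting one-dimensional Gaussian integral reduces it to the elementary identity $\int_{-a}^\infty u\, e^{-u^2 \mu^\omega_x/2}\,du = \mu^\omega_x{}^{-1} e^{-a^2 \mu^\omega_x/2}$, producing the factor $(2\pi\mu^\omega_x)^{-1/2}\exp(-\mu^\omega_x m^\omega_x(\varphi)^2/2)$. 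The analogous computation gives $\bbP^\omega[\{\varphi_x\geq 0\}\,|\,\cF_{\{x\}^c}] = \phi(m^\omega_x(\varphi)\sqrt{\mu^\omega_x})$.

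To finish, I would multiply and divide the integrand by $\phi(m^\omega_x(\varphi)\sqrt{\mu^\omega_x})$, which is justified because on the complement $\{\phi(m^\omega_x(\varphi)\sqrt{\mu^\omega_x}) = 0\}$ the whole integrand vanishes a.s. This rewrites the expression as an expectation of $\IND_{A_x}\bbP^\omega[\{\varphi_x\geq 0\}\,|\,\cF_{\{x\}^c}]$ times the desired ratio, and using $A_x \cap \{\varphi_x \geq 0\} = \cW^+_N$ together with the tower property collapses it to $\bbE^\omega[\IND_{\cW^+_N}\cdot (\,\cdot\,)]$, which upon dividing by $\bbP^\omega[\cW^+_N]$ yields precisely the claimed identity. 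There is no real obstacle here; the only mild care needed is in handling the event where the denominator vanishes, which is dealt with by the observation that the numerator vanishes there as well.
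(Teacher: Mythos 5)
Your proposal is correct and follows essentially the same route as the paper: condition on $\cF_{\{x\}^c}$, apply the DLR equation \eqref{eq:DLR2_GFF}, and evaluate the one-dimensional Gaussian integral together with $\bbP^\omega[\varphi_x\geq 0\,|\,\cF_{\{x\}^c}]=\phi(m^\omega_x(\varphi)\sqrt{\mu^\omega_x})$; the paper merely phrases the recombination via the conditional expectation under $\bbP^{\omega,+}_N$ rather than by multiplying and dividing by $\phi$. (Your caution about the vanishing denominator is harmless but unnecessary, since the standard normal cdf is strictly positive everywhere.)
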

As a consequence of Lemma~\ref{lem:SuperharmonicConditionedExpectation} we have that for all $\omega \in \Omega_{\lambda,\Lambda}$, $x \in \bbZ^d$ and $N \in \bbN$
\begin{equation}\label{eq:PreciseExpression-LCF}
    -\cL^\omega \bbE^{\omega,+}_N[\varphi_\cdot](x) 
    \begin{cases}
        \geq 0 \,, &\mbox{if } x \in V_N \,, \\
        =0 \,, &\mbox{if } x \in V_N^c
    \end{cases}
\end{equation}
(recall that $\cL^\omega$ is the generator of the random walk defined in~\eqref{eq:GeneratorCSRW}) and that for every bounded $B \subseteq \bbZ^d$ with $B\ni x$ it holds 
\begin{equation}\label{eq:SuperharmonicInequality}
    \bbE^{\omega,+}_N[\varphi_x] \geq \sum_{y \in \partial B} P^\omega_x [X_{T_B} = y ] \bbE^{\omega,+}_N[\varphi_y] \,.
\end{equation}
This inequality is pivotal in the proofs of both the lower and the upper bounds for~\eqref{eq:AsymptoticExpectationCF} below, as it allows to bypass the arguments based on the FKG inequality and translation invariance (which we lack) that were used in the proofs of the analogous results in~\cite{bolthausen1995entropic}.
\begin{remark}\label{rem:DecayCondExpectationAtInf}
    For every fixed $N \in \bbN$ the field of conditioned expectations satisfies
    \begin{equation}
        \lim_{R \to \infty} \sup_{|x|_\infty \geq R}\bbE^{\omega,+}_N[\varphi_x] =0\,.
    \end{equation}
    Clearly, it is enough to consider $\bbE^{\omega}_N[\varphi_x \IND_{\cW^+_N}]$ in place of $\bbE^{\omega,+}[\varphi_x]$.  For $x \in V_N^c$ we first write
    \begin{equation}
            \bbE^\omega[\varphi_x \IND_{\cW^+_N}]  =  \bbE^\omega \big[ \bbE^\omega[\varphi_x | \cF_{V_N}] \IND_{\cW^+_N} \big]  = \sum_{y \in \partial_{\mathrm{in}} V_N} P^\omega_x [X_{H_{V_N}} = y , H_{V_N} < \infty] \bbE^\omega[\varphi_y \IND_{\cW^+_N}] \,,
    \end{equation}
    having used in the second step~\eqref{eq:HarmonicAverageDef} and~\eqref{eq:DomainMP}. This implies that 
    \begin{equation}
        0 \leq  \sup_{|x|_\infty \geq R}\bbE^\omega[\varphi_x \IND_{\cW^+_N}] \leq (\overline{g} \,\bbP^\omega[\cW^+_N])^{1/2}  \sup_{|x|_\infty \geq R} h^\omega_{V_N}(x) \,.
    \end{equation}
    Now $ \sup_{|x|_\infty \geq R} h^\omega_{V_N}(x) = \sup_{|x|_\infty \geq R} e^\omega_{V_N} G^\omega (x) \leq c R^{2-d}$ which goes to zero as $R \to \infty$.
\end{remark}

\subsection{Asymptotic behavior of the conditioned expectation}\label{subsec:ExpCF}
In this Section we prove Theorem~\ref{thm:MainTheorem}, ii).
The strategy of the proof goes as follows.
We first show that for $\bbQ$-a.a.\ $\omega \in \Omega_{\lambda,\Lambda}$ it holds
\begin{equation}\label{eq:AsymptoticExpectationCF}
    \limsup_{N \to \infty} \sup_{x \in \bbZ^d} \left| \frac{\bbE^\omega[\varphi_x | \cW^+_N]}{\sqrt{4 \overline{g} \log N}} - h^\omega_{V_N}(x) \right| = 0 \,.
\end{equation}
The lower and upper bound for~\eqref{eq:AsymptoticExpectationCF} are given in Section~\ref{subsubsec:LowerBoundExpCF} and Section~\ref{subsubsec:UpperBoundExpCF}, respectively.
We then conclude the proof by using the homogenization result in Lemma~\ref{lem:UnifHarmonicPotential} that provides $\bbQ$-almost sure uniform convergence, as $N \to \infty$, of $h^\omega_{V_N}(\lfloor N x \rfloor)$  to $\mathscr{h}^\mathrm{hom}_V(x)$.
This allows us to swap $h^\omega_{V_N}(x)$ with $\mathscr{h}^{\mathrm{hom}}_V(x/N)$ in~\eqref{eq:AsymptoticExpectationCF}.
\subsubsection{Lower bound for~\eqref{eq:AsymptoticExpectationCF}}\label{subsubsec:LowerBoundExpCF}
We prove the lower bound for~\eqref{eq:AsymptoticExpectationCF} in Proposition~\ref{prop:LBCF}, using Proposition~\ref{lem:DecayScalProd} below. The main idea is to show that averages of the field over the boundary of large boxes are close to $\sqrt{4 \overline{g} \log N}$  with exponentially high probability under $\bbP^{\omega,+}_N$. We then crucially use the superharmonic property~\eqref{eq:SuperharmonicInequality} to bound $\bbE^{\omega,+}_N[\varphi_\cdot]$ from below. This strategy allows to bypass the lack translation invariance of the field with inhomogeneous conductances that was exploited in~\cite{bolthausen1995entropic}.  The proof of Proposition~\ref{lem:DecayScalProd} is inspired by the proof in \cite[Theorem 7.2]{chiarini2021disconnection}, with the crucial difference that therein averages over boxes were considered, while here we need to work with averages over the boundary of boxes, which is more delicate.

Given $\varepsilon \in (0,1)$ and $x \in V_N^\varepsilon$, let us define 
\begin{equation}\label{eq:DefRNEps}
    r_N^\varepsilon =  \frac{\varepsilon}{2\sqrt{d}}N\,, \qquad \text{so that } B(x,r_N^\varepsilon) \subseteq V^{\varepsilon/2}_N \text{ for all } x \in V_N^\varepsilon \,,
\end{equation} 
\begin{prop}\label{lem:DecayScalProd}
    For every $\delta \in (0,1)$ and $\varepsilon > 0$, 
    \begin{equation}
        \label{eq:ScalProdBulk}
        \limsup_{N \to \infty} \sup_{x \in V_N^\varepsilon}  \frac{1}{{N^{d-2}\log N}} \log \bbP^{\omega,+}_N [ \langle \varphi \,,\, \eta^\omega_{x,r_N^\varepsilon} \rangle \leq (1-\delta) \sqrt{4 \overline{g} \log N} ]  \leq -c \,.
    \end{equation}
\end{prop}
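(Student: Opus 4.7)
My plan is to establish the stronger quenched upper bound
\begin{equation*}
    \bbP^\omega\bigl[\cW^+_N \cap \{\langle \varphi, \eta^\omega_{x,r_N^\varepsilon}\rangle \leq (1-\delta)\sqrt{4\overline{g}\log N}\}\bigr] \leq \exp\bigl(-(2\overline{g}\capa^{\mathrm{hom}}(V) + c)\,N^{d-2}\log N\bigr)
\end{equation*}
uniformly in $x \in V_N^\varepsilon$, for some $c = c(\delta,\varepsilon) > 0$. Dividing this by the quenched lower bound $\bbP^\omega[\cW^+_N] \geq \exp(-(2\overline{g}\capa^{\mathrm{hom}}(V) + o(1))N^{d-2}\log N)$ from Proposition~\ref{prop:LowerBoundHW} then immediately yields~\eqref{eq:ScalProdBulk}.

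To obtain this joint upper bound I would refine the coarse-graining scheme of Proposition~\ref{prop:UpperBoundHW} by exploiting the Gaussian functional $Z^\omega_{f,\beta,b}$ of~\eqref{eq:ZfbetabDefinition} with $\beta>0$. Fix $\delta' \in (0,1)$ close to $1$ and set $a_{\delta'} = 4\delta'\overline{g}$. Thanks to Proposition~\ref{prop:NegligibleEvent}, up to super-exponentially negligible probability we may assume that the good-boxes set $\cS^\omega_{\delta'} \subseteq \cC_N$ satisfies~\eqref{eq:CoveringProperty}, and a union bound over the at most $2^{|\cC_N|} = e^{o(N^{d-2}\log N)}$ admissible $\cS$ reduces the problem to estimating, uniformly in such $\cS$ and in $x \in V_N^\varepsilon$,
\begin{equation*}
    \bbP^\omega\Bigl[\,\bigcap_{z\in \cS}\bigl\{\max_{y\in B_z}\xi^{\omega,z}_y \geq \sqrt{a_{\delta'}\log N}\bigr\} \,\cap\, \{\langle \varphi, \eta^\omega_{x,r_N^\varepsilon}\rangle \leq (1-\delta)\sqrt{4\overline{g}\log N}\}\Bigr].
\end{equation*}
On this event, picking $f(z)$ to be a box-wise maximizer and using $\sum_{z\in\cS}\nu^\omega(z) = 1$ we have $\sup_{f\in\cF} Z^\omega_f \geq \sqrt{a_{\delta'}\log N}$, hence for any $b > 0$ and $\beta \geq 0$
\begin{equation*}
    \sup_{f\in\cF} Z^\omega_{f,\beta,b} = b\sup_{f\in\cF} Z^\omega_f - \beta\langle \varphi, \eta^\omega_{x,r_N^\varepsilon}\rangle \geq b\sqrt{a_{\delta'}\log N} - \beta(1-\delta)\sqrt{4\overline{g}\log N}.
\end{equation*}
I then apply Corollary~\ref{cor:UpperBoundProbZf} with $\Delta = \sqrt{a_{\delta'}\log N}$ and $\Theta = (1-\delta)\sqrt{4\overline{g}\log N}$ and pass to the limit as $N,L,K\to\infty$ (and an auxiliary bulk parameter $\varepsilon'\to 0$) using Corollary~\ref{cor:SolidificationCapacity}, the homogenization~\eqref{eq:hom_capacity}, and Proposition~\ref{prop:SolidificationHittingTime} (which yields $\langle h^\omega_S, \eta^\omega_{x,r_N^\varepsilon}\rangle \to 1$ uniformly in $x\in V_N^\varepsilon$). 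The resulting exponential decay rate per $N^{d-2}\log N$ is
\begin{equation*}
    \Phi(b,\beta,\delta') = 2\overline{g}\,\bigl(b\sqrt{\delta'} - \beta(1-\delta)\bigr)_+^2 \cdot \frac{\capa^{\mathrm{hom}}(V)}{c_6\beta^2 (2\sqrt{d}/\varepsilon)^{d-2}\capa^{\mathrm{hom}}(V) + b^2 - 2\beta b}.
\end{equation*}

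It remains to exhibit $b,\beta,\delta'$ with $\Phi > 2\overline{g}\capa^{\mathrm{hom}}(V)$. Taking $b = 1$ and $\delta'$ close to $1$ reduces the inequality, after expansion and cancellation, to $2\beta\delta > \beta^2\bigl[c_6(2\sqrt{d}/\varepsilon)^{d-2}\capa^{\mathrm{hom}}(V) - (1-\delta)^2\bigr]$, which holds strictly for any sufficiently small $\beta > 0$ because the left-hand side is linear in $\beta$ while the right-hand side is quadratic. The main obstacle of the scheme, compared to the hard-wall upper bound of Proposition~\ref{prop:UpperBoundHW} (where $\beta = 0$ already suffices), is precisely that the radius $r_N^\varepsilon \asymp N$ is macroscopic, so the variance contribution $c_6\beta^2 r^{2-d}\capa^\omega(S)$ in Corollary~\ref{cor:UpperBoundProbZf} does \emph{not} vanish but converges to the positive constant $c_6\beta^2(2\sqrt{d}/\varepsilon)^{d-2}\capa^{\mathrm{hom}}(V)$. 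This prevents us from exploiting the low-boundary-average constraint by taking $\beta$ large, and the argument closes only because at $\beta = 0$, $\delta' = 1$ the rate coincides exactly with $2\overline{g}\capa^{\mathrm{hom}}(V)$, so any sufficiently small positive $\beta$ produces a linear gain that strictly beats the quadratic cost.
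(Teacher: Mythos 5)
Your overall route is the same as the paper's: bound the conditioned probability by the ratio of a joint upper bound and the lower bound of Proposition~\ref{prop:LowerBoundHW}, encode the good-box event together with the boundary average into $Z^\omega_{f,\beta,b}$, apply Corollary~\ref{cor:UpperBoundProbZf}, use solidification and homogenization, and close by noting that a small $\beta>0$ gives a gain linear in $\beta$ against a quadratic cost (your choice $b=1$ instead of the paper's $b=1+\beta\langle h^\omega_S,\eta^\omega_{x,r^\varepsilon_N}\rangle$ is a harmless variant; the final inequality you derive is of the same form). However, there is a genuine gap in the way you pass from Corollary~\ref{cor:UpperBoundProbZf} to your rate $\Phi(b,\beta,\delta')$: you take the denominator to be $c_6\beta^2(2\sqrt d/\varepsilon)^{d-2}\capa^{\mathrm{hom}}(V)+b^2-2\beta b$, which tacitly assumes that the remainder $\cR_{K,L,\beta,b}$ in $\alpha^\omega_{K,L,\beta,b}$ vanishes. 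But $\cR_{K,L,\beta,b}$ contains the term $2b\beta\,\langle \IND_{\cU},\eta^\omega_{x,r^\varepsilon_N}\rangle$, and when $\cS$ consists of \emph{all} good boxes in $\cC_N$ the sphere $\partial B(x,r^\varepsilon_N)$ cuts through the periodic array of enlarged boxes $U_z$: a positive fraction (of constant order, not $o(1)$) of the harmonic measure $\eta^\omega_{x,r^\varepsilon_N}$ lands inside $\cU$. This adds a term $\approx 2\beta\kappa$ with $\kappa>0$ of order one to your denominator, turning your closing inequality into $2(\delta-\kappa)>\beta[\cdots]$, which fails for small $\delta$. So the scheme as written does not close uniformly in $\delta$.

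The paper's proof removes exactly this obstruction by pruning the good boxes near the sphere: it works with $\cS(x)=\{z\in\cS:\,U_z\cap\partial B(x,r^\varepsilon_N)=\varnothing\}$, for which $\langle \IND_{\cU(x)},\eta^\omega_{x,r^\varepsilon_N}\rangle=0$ identically, and checks that the pruned family still satisfies the covering property~\eqref{eq:CoveringProperty} (with $\rho=3/4$ instead of $1/2$, since only $O((N/L)^{d-1})$ boxes per $F$ are discarded), so that Proposition~\ref{prop:SolidificationHittingTime} and Corollary~\ref{cor:SolidificationCapacity} still give $\langle h^\omega_{S(x)},\eta^\omega_{x,r^\varepsilon_N}\rangle\to1$ and $\capa^\omega(S(x))\gtrsim\capa^\omega(V_N^{\widetilde\varepsilon})$ uniformly in $x$. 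With this modification (or some other argument making the overlap term negligible), your computation goes through; without it, the step identifying the limiting denominator is not justified.
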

\begin{proof}
    The probability on the left-hand side of~\eqref{eq:ScalProdBulk} can be written as
    \begin{equation}\label{eq:ConditionedProbExpanded}
        \frac{\bbP^\omega[ \{\langle \varphi \,,\, \eta^\omega_{x,r^\varepsilon_N} \rangle \leq (1-\delta) \sqrt{4 \overline{g} \log N} \} \cap \cW^+_N ]}{ \bbP^\omega[\cW^+_N] }\,.
    \end{equation}
    In view of Proposition~\ref{prop:LowerBoundHW}, it suffices to get an asympotic upper bound for the probability in the numerator.
    We start with a preliminary step. For $\cS \subseteq \cC_N$ and $x \in V_N^\varepsilon$, we define the sets
    \begin{equation}
        \cS(x) = \{ z \in \cS: U_z \cap \partial B(x,r^\varepsilon_N) = \varnothing \} \,,\qquad S(x) = \bigcup_{z \in \cS(x)} B_z \,.
    \end{equation}
     For $\cS \subseteq \cC_N$ and $x \in V_N^\varepsilon$, we also consider the Gaussian functionals $Z^\omega_f(x)$ and $Z^\omega_{f,\beta,b}(x)$ defined as in~\eqref{eq:ZfDefinition} and~\eqref{eq:ZfbetabDefinition} for the set $\cS(x)$, that is, for $f \in \cF$,
    \begin{equation}
        \begin{aligned}
            &Z^\omega_f(x) = \sum_{z \in \cS(x)} \nu^{\omega}(z) \xi^{\omega,z}_{f(z)}\,,\\
            &Z^\omega_{f,\beta,b}(x) = b Z^\omega_f(x) - \beta \langle \varphi , \eta^\omega_{x,r^\varepsilon_N} \rangle \,, \qquad b>0\,, \beta \geq 0 \,.
        \end{aligned}
    \end{equation}
   Consider $\hat{L} = N/(\log N)^{1/4d}$, for every $F \subseteq \mathscr{F}$ (recall~\eqref{eq:BoxesToCover})  and $\cS\subseteq \cC_N$ we have
    \begin{equation}\label{eq:EstimateSx}
        \inf_{x \in V_N^\varepsilon} | \cS(x) \cap F| = |\cS \cap F| - \sup_{x \in V_N^\varepsilon}|(\cS \setminus \cS(x)) \cap F| \geq |\cS \cap F| - c (N/L)^{d-1}\,.
    \end{equation}
    Now suppose that $N$ is large enough so that $c (N/L)^{d-1} \leq 1/4 |\cC_N\cap F|$ for all $F \in \mathscr{F}$. Then, if $\cS\subseteq \cC_N$ satisfies the covering property~\eqref{eq:CoveringProperty} with $\rho = 1/2$ and $\hat{L}=N/(\log N )^{1/4d}$, we get from~\eqref{eq:EstimateSx} that also $\cS(x)$ satisfies~\eqref{eq:CoveringProperty} with $\rho = 3/4$ and $\hat{L}=N/(\log N )^{1/4d}$ for every $x \in V_N^\varepsilon$.
   
    As in the proof of Proposition~\ref{prop:UpperBoundHW}, for $\delta \in (0,1)$ we consider the set of vertices
    \begin{equation}
        \cS^\omega_\delta = \{ z \in \cC_N: z \text{ is good at level } a_\delta \}\,.
    \end{equation}
     Analogously to~\eqref{eq:InclusionHardWallGoodBoxes}, we have for every $x \in V_N^\varepsilon$ that
    \begin{equation}
        \cW^+_N \subseteq \bigcap_{z\in \cS^\omega_\delta(x)}\{ \varphi_y \geq 0 \text{ for all } y \in B_z \} \subseteq \bigcap_{z\in \cS^\omega_\delta(x)}\Big\{ \max_{y \in B_z} \xi^{\omega,z}_y \geq \sqrt{a_\delta \log N} \Big\} \,.
    \end{equation}
    By Proposition~\ref{prop:NegligibleEvent}, by discarding an event of negligible probability, we can assume that $S_\delta^\omega$ satisfies the covering property with $\rho=1/2$ and $\hat{L}=N/(\log N)^{1/4d}$ for all $N$ sufficiently large. By the previous discussion, there exists $N_0 \in \bbN$ such that for all $N \geq N_0$ and for $\bbQ$-a.e.\ $\omega \in \Omega_{\lambda,\Lambda}$,
    $\cS^\omega_\delta(x)$ satisfies the covering property~\eqref{eq:CoveringProperty} with $\rho = 3/4$ and $\hat{L}=N/(\log N)^{1/4d}$ for all $x\in V_N^\varepsilon$.

    Similarly to the arguments in the proof of Proposition~\ref{prop:UpperBoundHW}, going from below~\eqref{eq:InclusionHardWallGoodBoxes} to~\eqref{eq:UpperBoundHWEncodingInZ}, we can write that for every $\widetilde{\delta}>0$, for $\bbQ$-a.e.\ $\omega \in \Omega_{\lambda,\Lambda}$
    \begin{equation}\label{eq:EstimateNumeratorConditionedProb}
        \begin{aligned}
            &\varlimsup_{N\to \infty}  \sup_{x\in V_N^\varepsilon}\frac{1}{N^{d-2}\log N} \log \bbP^\omega[ \{\langle \varphi \,,\, \eta^\omega_{x,r^\varepsilon_N} \rangle \leq (1-\delta) \sqrt{4 \overline{g} \log N} \} \cap \cW^+_N ]\\
            &\quad \leq \varlimsup_{N\to \infty} \sup_{\cS} \sup_{x\in V_N^\varepsilon}\frac{1}{N^{d-2}\log N} \log \bbP^\omega \Big[ \sup_{f\in \cF }Z^\omega_{f,\beta,b}(x) \geq b\, \widetilde{\delta}\sqrt{4  \overline{g}\log N} -\beta (1-\delta) \sqrt{ 4 \overline{g} \log N}\Big] \,,
        \end{aligned}
    \end{equation}
    where the supremum is over all $\cS \subseteq \cC_N$ satisfying~\eqref{eq:CoveringProperty} with $\hat{L} = N/(\log N)^{1/4d}$ and $\rho = 1/2$.
    We choose $b = 1+\beta \langle h^\omega_{ S} , \eta^\omega_{x,r^\varepsilon_N} \rangle $ and notice that $b$ forms a bounded sequence in $N$. We now bound the right-hand side of~\eqref{eq:EstimateNumeratorConditionedProb} with the help of Corollary~\ref{cor:UpperBoundProbZf}, to get the upper estimate
    \begin{equation}\label{eq:CFNumeratorEstimate2}
        \begin{aligned}
            &\varlimsup_{N\to \infty}\sup_{\cS}\sup_{x \in V_N^\varepsilon}  \frac{1}{N^{d-2}\log N}\bbP^\omega \Big[ \sup_{f \in \cF} Z^\omega_{f,\beta,b}(x) \geq b\, \widetilde{\delta}\sqrt{4\overline{g}\log N} -\beta (1-\delta) \sqrt{ 4 \overline{g} \log N} \Big] \\
            &\quad \leq- \varliminf_{N\to \infty} \inf_{\cS}\inf_{x \in V_N^\varepsilon}   \frac{1}{2}\,\frac{1}{N^{d-2}\log N}  \left( \sqrt{4 \overline{g} \log N} \left( b\, \widetilde{\delta} - \beta (1-\delta) \right) -b \frac{c_7}{K^{c_8}} \sqrt{\frac{|\cS(x)|}{\capa^\omega(S(x))}} \right)_+^2 \\
            &\qquad \cdot \, \frac{\capa^\omega(S(x))}{c_6 \beta^2 N^{2-d} \capa^\omega(S(x)) + \alpha^\omega_{K,L,\beta,b}(x)} \,.
        \end{aligned}
    \end{equation}
    where
    \begin{equation}
        \alpha^\omega_{K,L,\beta,b}(x) = b^2 -2\beta b \langle h^\omega_{S(x)} , \eta^\omega_{x,r^\varepsilon_N} \rangle + \cR_{K,L,\beta,b}(x)\,,
    \end{equation}
    with $0\leq \cR_{K,L,\beta,b}(x) \leq b^2 U(K,L) + 2b\beta \Big[U(K,L) + \langle \IND_{\cU(x)} ,\eta^\omega_{x,r} \rangle \Big]$, where
    \begin{equation}
        \cU(x) = \bigcup_{z\in \cS(x)} U_z
    \end{equation}
    and $\lim_{K \to \infty} \limsup_{L \to \infty} U(K,L) = 0$, $U(K,L) \geq 0$ (see~\eqref{eq:REstimate} and~\eqref{eq:UDecay}).
    We notice that by Proposition~\ref{prop:SolidificationHittingTime}
    \begin{equation}\label{eq:LimitDotProd}
        \lim_{N \to \infty}  \inf_{\cS}\inf_{x \in V_N^\varepsilon}\langle h^\omega_{S(x)} , \eta^\omega_{x,r} \rangle = 1 \,.
        \end{equation}
    Since $\cU(x)\cap \partial B(x,r_N^\varepsilon)=\varnothing$ by construction, we have $\langle \IND_{\cU(x)},\eta^\omega_{x,r^\varepsilon_N} \rangle = 0$, hence for $\beta\leq 1$ 
    \begin{equation}
        0\leq \limsup_{N\to \infty} \alpha^\omega_{K,L,\beta,b}(x) \leq 1 + \vartheta_{K} \,,\qquad \text{ with } \vartheta_{K} =  \varlimsup_{L\to\infty} 8 U(K,L)  \,.
    \end{equation} 
    Furthermore, for any $\widetilde{\varepsilon}<\varepsilon$, we can apply Corollary~\ref{cor:SolidificationCapacity} to the family $S(x)$, $x\in V_N^{\widetilde{\varepsilon}}$ (as they satisfy a covering property with $\rho=3/4$ and $\hat{L}=N/(\log N)^{1/4d}$) and $V_N^{\widetilde{\varepsilon}}$ (assuming w.l.o.g.\ that it has smooth boundary), and use \eqref{eq:RelationcSCapS} for $\cS(x)$ and $S(x)$ to get that the right-hand side of~\eqref{eq:CFNumeratorEstimate2} is bounded from above by 
    \begin{equation}\label{eq:CFNumeratorEstimate3}
             -  2 \overline{g} \left( (1+\beta) \widetilde{\delta} - \beta (1-\delta) \right)^2 \varliminf_{N\to \infty}    \, \frac{N^{2-d}\capa^\omega(V_N^{\widetilde{\varepsilon}})}{c_6 \beta^2 N^{2-d} \capa^\omega(V_N^{\widetilde{\varepsilon}}) + 1 + \vartheta^\omega_{K}} \,.
    \end{equation}
    Finally, we can use the homogenization result~\eqref{eq:hom_capacity} to replace $N^{2-d}\capa^\omega(V_N^{\widetilde{\varepsilon}})$ with $\capa^{\mathrm{hom}}(V^{\widetilde{\varepsilon}})$ in~\eqref{eq:CFNumeratorEstimate3}, take $ K\to \infty$, $\widetilde{\varepsilon} \to 0$, $\widetilde{\delta} \to 1$ to get that the right-hand side of~\eqref{eq:CFNumeratorEstimate2} is bounded from above by
    \begin{equation}
       -  2 \overline{g} \left(1+\delta \beta  \right)^2   \, \frac{\capa^{\mathrm{hom}}(V)}{c_6 \beta^2 \capa^{\mathrm{hom}}(V) + 1} \,.  
    \end{equation}
    We can now find $\beta >0$ depending on $\delta$ small enough such that this is less than $-2 \overline{g} \,\capa^{\mathrm{hom}}(V) - c$.
    The claim now follows by combining this estimate with Proposition~\ref{prop:LowerBoundHW}.
\end{proof}

We are now ready to prove the lower bound for~\eqref{eq:AsymptoticExpectationCF}.
\begin{prop}\label{prop:LBCF}
    For $\bbQ$-a.e.\ $\omega\in\Omega_{\lambda,\Lambda}$
    \begin{equation}\label{eq:LowerBoundExpectationCFNotHomogenized}
        \liminf_{N \to \infty} \inf_{x \in \bbZ^d} \Big[ \frac{\bbE^{\omega,+}_N[\varphi_x]}{\sqrt{4 \overline{g} \log N}} - h^\omega_{V_N}(x)   \Big] \geq 0 \,.
    \end{equation} 
\end{prop}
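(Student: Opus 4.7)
The plan is to combine the superharmonicity of $u_N(\cdot) := \bbE^{\omega,+}_N[\varphi_\cdot]$ from Lemma~\ref{lem:SuperharmonicConditionedExpectation} with the concentration of boundary averages in Proposition~\ref{lem:DecayScalProd} to obtain a sharp lower bound deep in the bulk, then propagate it to the whole lattice by a superharmonic comparison with the equilibrium potential of a smooth inner approximation of $V$, and finally identify the discrete harmonic potential $h^\omega_{V_N}$ using the homogenization from Lemma~\ref{lem:UnifHarmonicPotential}.

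Fix $\delta, \varepsilon\in(0,1)$. For $x\in V_N^\varepsilon$, the sphere $\partial B(x,r_N^\varepsilon)$ lies in $V_N^{\varepsilon/2}\subseteq V_N$ by~\eqref{eq:DefRNEps}, so under $\bbP^{\omega,+}_N$ we have $\varphi_y\geq 0$ for every $y\in \partial B(x,r_N^\varepsilon)$ and therefore $\langle\varphi,\eta^\omega_{x,r_N^\varepsilon}\rangle\geq 0$. Applying~\eqref{eq:SuperharmonicInequality} to the ball $B(x,r_N^\varepsilon)$ gives $u_N(x)\geq \bbE^{\omega,+}_N\langle\varphi,\eta^\omega_{x,r_N^\varepsilon}\rangle$, and splitting on the event appearing in Proposition~\ref{lem:DecayScalProd} yields
\begin{equation*}
u_N(x) \;\geq\; (1-\delta)\sqrt{4\overline{g}\log N}\,\bbP^{\omega,+}_N\!\left[\langle\varphi,\eta^\omega_{x,r_N^\varepsilon}\rangle\geq (1-\delta)\sqrt{4\overline{g}\log N}\,\right].
\end{equation*}
Since the probability on the right is $1-o(1)$ uniformly in $x\in V_N^\varepsilon$, we obtain $\inf_{x\in V_N^\varepsilon} u_N(x)/\sqrt{4\overline{g}\log N}\geq 1-\delta-o(1)$ for $\bbQ$-a.e.\ $\omega$.

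To reach all of $\bbZ^d$, observe that~\eqref{eq:PreciseExpression-LCF} makes $u_N$ globally $\omega$-super\-har\-monic; it is non-negative (from the harmonic representation outside $V_N$ with non-negative boundary data on $V_N$) and vanishes at infinity by Remark~\ref{rem:DecayCondExpectationAtInf}. Fix any open bounded $U\subseteq\bbR^d$ with Lipschitz boundary and $\overline U\subseteq V$, so that $U_N\subseteq V_N^{\varepsilon(U)}$ for some $\varepsilon(U)>0$ and all $N$ large. By optional stopping (using the decay at infinity to pass from finite balls to $\bbZ^d$),
\begin{equation*}
u_N(x)\;\geq\; E^\omega_x\!\left[u_N(X_{H_{U_N}})\IND_{H_{U_N}<\infty}\right]\;\geq\;\Big(\inf_{y\in U_N} u_N(y)\Big)\,h^\omega_{U_N}(x), \qquad x\in\bbZ^d.
\end{equation*}
Together with the bulk bound this gives $u_N(x)/\sqrt{4\overline{g}\log N}\geq (1-\delta-o(1))\,h^\omega_{U_N}(x)$ uniformly in $x$. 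Applying Lemma~\ref{lem:UnifHarmonicPotential} to both $U$ and $V$, for $\bbQ$-a.e.\ $\omega$
\begin{equation*}
\liminf_{N\to\infty}\inf_{x\in\bbZ^d}\!\left[\frac{u_N(x)}{\sqrt{4\overline{g}\log N}}-h^\omega_{V_N}(x)\right]\;\geq\;-\delta-\sup_{z\in\bbR^d}\!\big[\mathscr{h}^{\mathrm{hom}}_V(z)-\mathscr{h}^{\mathrm{hom}}_U(z)\big].
\end{equation*}

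To conclude, we approximate $V$ from inside by a sequence $(U_n)_n$ of open sets with smooth boundary and $\overline{U_n}\subseteq V$, $U_n\nearrow V$. Strong regularity of $V$ gives pointwise convergence $\mathscr{h}^{\mathrm{hom}}_{U_n}(z)\uparrow \mathscr{h}^{\mathrm{hom}}_V(z)$, and continuity of $\mathscr{h}^{\mathrm{hom}}_V$ together with Dini's lemma on large compact sets plus the uniform Green-function decay at infinity (shared by both potentials) upgrades this to uniform convergence on $\bbR^d$. Letting $n\to\infty$ and then $\delta\to 0$ yields the claim. The main obstacle is therefore not the bulk step (handled by Proposition~\ref{lem:DecayScalProd}) but the propagation and identification with $h^\omega_{V_N}$: one must verify that $u_N$ can be compared to $h^\omega_{U_N}$ uniformly on $\bbZ^d$ via the global superharmonicity and decay at infinity, and that the continuum harmonic potentials converge uniformly under inner smooth approximation of the strongly regular set $V$.
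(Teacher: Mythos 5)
Your proposal is correct, and its skeleton matches the paper's: the bulk estimate is identical (superharmonicity via~\eqref{eq:SuperharmonicInequality} applied on $B(x,r_N^\varepsilon)$, non-negativity of $\langle\varphi,\eta^\omega_{x,r_N^\varepsilon}\rangle$ under $\bbP^{\omega,+}_N$, and the uniform concentration of Proposition~\ref{lem:DecayScalProd}), and the final identification with $h^\omega_{V_N}$ goes through Lemma~\ref{lem:UnifHarmonicPotential}, monotone convergence of the continuum potentials and Dini, exactly as in the paper. Where you genuinely diverge is the propagation from the bulk to all of $\bbZ^d$. The paper conditions on $\cF_{V_N^\varepsilon}$ and invokes the conditional FKG-type inequality~\eqref{eq:FKG2} to get $\bbE^{\omega,+}_N[\varphi_x]\geq\sum_{y\in\partial_{\mathrm{in}}V_N^\varepsilon}P^\omega_x[X_{H_{V_N^\varepsilon}}=y]\,\bbE^{\omega,+}_N[\varphi_y]$, i.e.\ a bound through $h^\omega_{V_N^\varepsilon}$; you instead use only the analytic facts already recorded in~\eqref{eq:PreciseExpression-LCF}, namely that $u_N=\bbE^{\omega,+}_N[\varphi_\cdot]$ is globally superharmonic, non-negative (harmonic representation outside $V_N$ with non-negative data) and bounded for fixed $N$, so that the stopped supermartingale $u_N(X_{t\wedge H_{U_N}\wedge T_{B(0,R)}})$ plus non-negativity (to discard the exit term as $R\to\infty$) gives $u_N(x)\geq(\inf_{U_N}u_N)\,h^\omega_{U_N}(x)$ on all of $\bbZ^d$. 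This avoids FKG altogether, at the price of spelling out the optional-stopping justification; it also has a small technical advantage: by comparing with smooth inner approximations $U_n\subset\subset V$ rather than with the bulks $V^{\varepsilon_n}$, the hypotheses of Lemma~\ref{lem:UnifHarmonicPotential} (strong regularity) are satisfied by construction, whereas the paper applies that lemma to $V^{\varepsilon_n}$, whose regularity is handled only implicitly. Both routes deliver the same $(1-\delta)$-bound uniformly in $x$ and conclude identically after $n\to\infty$ and $\delta\to 0$.
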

\begin{proof}
    The result is readily implied by the fact that $h^\omega_{V_N}$ is bounded if we show that for every $\delta \in (0,1)$, for $\bbQ$-a.a.\ $\omega \in \Omega_{\lambda,\Lambda}$ 
    \begin{equation}\label{eq:LowerBoundExpectationCFNotHomogenizedDelta}
        \liminf_{N \to \infty} \inf_{x \in \bbZ^d} \Big[ \frac{\bbE^{\omega,+}_N[\varphi_x]}{\sqrt{4 \overline{g} \log N}} - (1-\delta)h^\omega_{V_N}(x)   \Big] \geq 0 \,.
    \end{equation}
    Fix $\varepsilon>0$ and take $x \in V_N^\varepsilon$. Then, by the superharmonicity of $\bbE^{\omega,+}_N[\varphi_\cdot]$ and the definition of $\eta^\omega_{x,r_N^\varepsilon}$ in~\eqref{eq:DefinitionEta}, we get
    \begin{equation}
        \bbE^{\omega,+}_N[\varphi_x] \geq \sum_{y \in \partial B(x,r_N^\varepsilon)} P^\omega_x [X_{T_{B(x,r_N^\varepsilon)}} = y ] \bbE^{\omega,+}_N[\varphi_y] = \bbE^{\omega,+}_N [\langle \varphi \,,\, \eta^\omega_{x,r_N^\varepsilon} \rangle] \,.
    \end{equation}
    We set $E_{\delta,x}$ to be the event on the left-hand side of~\eqref{eq:ScalProdBulk}, use that $\langle \varphi \,,\, \eta^\omega_{x,r_N^\varepsilon} \rangle\geq 0$ conditionally on $\cW^+_N$, and deduce that
    \begin{equation}
        \bbE^{\omega,+}_{N} [\langle \varphi \,,\, \eta^\omega_{x,r_N^\varepsilon} \rangle] \geq
        (1-\delta) \sqrt{4 \overline{g} \log N}\, \bbP^{\omega,+}_N(E^c_{\delta,x}) \,,
    \end{equation}
    which implies that
    \begin{equation}\label{eq:LowerBoundExpectationCFBulk}
        \liminf_{N\to \infty} \inf_{x\in V_N^\varepsilon}\Big[\frac{\bbE^{\omega,+}_N[\varphi_x]}{\sqrt{4 \overline{g} \log N}} - (1-\delta)h^\omega_{V_N}(x)\Big] \geq  - \limsup_{N\to \infty} \sup_{x\in V_N^\varepsilon} \bbP^{\omega,+}_N(E_{\delta,x}) = 0 \,,
    \end{equation}
    where we used that $h_{V_N}^\omega(x) = 1$ for $x \in V_N^\varepsilon$ and in the last inequality we applied Proposition~\ref{lem:DecayScalProd}. 
  Let $x\in (V_N^\varepsilon)^c$. Then, by the inequality~\eqref{eq:FKG2} 
    \begin{equation}
        \begin{aligned}
            \bbE^{\omega,+}_N[\varphi_x]  &= \bbE^{\omega,+}_N \big[ \bbE^{\omega,+}_N[\varphi_x | \cF_{V_N^\varepsilon}] \big] \\&\geq \bbE^{\omega,+}_N \big[\bbE[\varphi_x | \cF_{V_N^\varepsilon}]\big] =
             \sum_{y \in \partial_{\mathrm{in}} V_N^\varepsilon} P^\omega_x \big[ X_{H_{V_N^\varepsilon}} = y \big] \bbE^{\omega,+}_N[\varphi_y] \,,
        \end{aligned}
    \end{equation}
    which implies with the help of~\eqref{eq:LowerBoundExpectationCFBulk} that for $\bbQ$-a.a.\ $\omega \in \Omega_{\lambda,\Lambda}$
    \begin{equation}
         \liminf_{N\to \infty} \inf_{x\in (V_N^\varepsilon)^c} \Big[\frac{\bbE^{\omega,+}_N [\varphi_x]}{\sqrt{4\overline{g}\log N} } - (1-\delta) h^\omega_{V_N^\varepsilon}(x)  \Big] \geq 0 \,.
    \end{equation}
    We now consider a sequence $\varepsilon_n \downarrow 0$ and show that for $\bbQ$-a.a.\ $\omega \in \Omega_{\lambda,\Lambda}$
    \begin{equation}\label{eq:SolidificationEquilibriumPotential}
        \lim_{n \to \infty} \limsup_{N \to \infty} \sup_{x \in \bbZ^d}  \big| h^\omega_{V_N^{\varepsilon_n}}(x)  - h^\omega_{V_N}(x) \big|   = 0\,, 
    \end{equation}
    which implies~\eqref{eq:LowerBoundExpectationCFNotHomogenizedDelta}. By the homogenization result in Lemma~\ref{lem:UnifHarmonicPotential} applied to $V^{\varepsilon_n}$ and $V$, we have that for $\bbQ$-a.a.\ $\omega \in \Omega_{\lambda,\Lambda}$
    \begin{equation}
         \lim_{n \to \infty} \limsup_{N \to \infty} \sup_{x \in \bbZ^d} \big| h^\omega_{V_N^{\varepsilon_n}}(x)  - h^\omega_{V_N}(x) \big| \leq  \lim_{n \to \infty} \sup_{x \in \bbR^d } \big|\mathscr{h}^{\mathrm{hom}}_{V^{\varepsilon_n}}(x)- \mathscr{h}^{\mathrm{hom}}_V(x)\big|\,.
    \end{equation}
    To show that the right-hand side is zero, we notice that $\mathscr{h}^{\mathrm{hom}}_{V^{\varepsilon_n}}(x)\uparrow \mathscr{h}^{\mathrm{hom}}_{V}(x)$ pointwise by Proposition 1.13 in \cite{port2012brownian}, that $\mathscr{h}^{\mathrm{hom}}_{V}(x)$ is continuous being $V$ strongly regular, and that $\sup_{\bbR^{d}\setminus B(0,R)}|h^{\mathrm{hom}}_{V^{\varepsilon_n}}(x)| \vee |h^{\mathrm{hom}}_{V}(x)|\leq c/R^{d-2}$, which goes to zero as $R \to \infty$. Therefore, one can apply Dini's theorem to conclude that $\mathscr{h}^{\mathrm{hom}}_{V^{\varepsilon_n}} \to \mathscr{h}^{\mathrm{hom}}_{V}$ uniformly.
\end{proof}
\subsubsection{Upper bound for~\eqref{eq:AsymptoticExpectationCF}}\label{subsubsec:UpperBoundExpCF}
We prove the upper bound for~\eqref{eq:AsymptoticExpectationCF} in Proposition~\ref{prop:UBCF} using Lemma \ref{lem:defectharmonicity} below, that provides a uniform control on $\cL^\omega \bbE^{\omega,+}_N[\varphi_\cdot]$ over $V_N\setminus \partial_{\mathrm{in}}V_N$, which actually holds over all $\bbZ^d$ (see Lemma \ref{lem:ExtensionControlLu}). Such quantitative result on the harmonicity defect of $\bbE^{\omega,+}_N[\varphi_\cdot]$ allows us to compare on the bulk of $V_N$ the conditioned field with a weighted average over a sufficiently small ball contained in $V_N$, with a controlled error that vanishes as $N\to \infty$ by properly tuning the radius of the ball.

\begin{lemma}\label{lem:defectharmonicity}
    For every $\delta>0$ there exists a constant $c_9=c_9(\delta)>0$ such that for $\bbQ$-a.a.\ $\omega \in \Omega_{\lambda,\Lambda}$ we can find $N_0(\omega,\delta) \in \bbN$ such that for all $N \geq N_0(\omega,\delta)$
    \begin{equation}\label{eq:defectharmonicity}
        \sup_{x \in V_N\setminus \partial_\mathrm{in} V_N}|\cL^\omega \bbE^{\omega,+}_N[\varphi_\cdot](x)| \leq \frac{c_9}{N^{2-\delta}} \,.
    \end{equation}
\end{lemma}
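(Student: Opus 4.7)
My plan is to start from Lemma~\ref{lem:SuperharmonicConditionedExpectation}, which expresses
\[
-\cL^\omega \bbE^{\omega,+}_N[\varphi_\cdot](x) = \frac{1}{\sqrt{2\pi\mu^\omega_x}}\,\bbE^{\omega,+}_N\bigg[\frac{\exp(-\mu^\omega_x m^\omega_x(\varphi)^2/2)}{\phi(m^\omega_x(\varphi)\sqrt{\mu^\omega_x})}\bigg].
\]
For $x \in V_N\setminus\partial_{\mathrm{in}}V_N$ all nearest neighbors of $x$ lie in $V_N$, so under $\cW^+_N$ one has $m^\omega_x(\varphi)\geq 0$ and thus $\phi(m^\omega_x(\varphi)\sqrt{\mu^\omega_x})\geq 1/2$. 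Together with $\mu^\omega_x \in [2d\lambda, 2d\Lambda]$, this reduces the statement to showing
\[
\bbE^{\omega,+}_N[\exp(-\mu^\omega_x m^\omega_x(\varphi)^2/2)] \leq c(\delta)/N^{2-\delta}.
\]

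The crucial structural input is the inequality $\mu^\omega_x\,\overline{g}\geq 1$, which follows from the identity $h^\omega_{\{x\}}(x) = 1 = g^\omega(x,x)\,e^\omega_{\{x\}}(x)$ via~\eqref{eq:EqPotentialAndMeasure} combined with $e^\omega_{\{x\}}(x) \leq \mu^\omega_x$ and $g^\omega(x,x) \leq \overline{g}$. This ensures that whenever $m^\omega_x(\varphi)$ lies close to $\sqrt{4\overline{g}\log N}$, the exponential factor decays like $N^{-2+o(1)}$. The push of $m^\omega_x(\varphi)$ to that level comes from Proposition~\ref{prop:LBCF}: since $h^\omega_{V_N}(y) = 1$ for every neighbor $y\sim x$ (as $y \in V_N$), averaging the bound $\bbE^{\omega,+}_N[\varphi_y] \geq (1-o_N(1))\sqrt{4\overline{g}\log N}\,h^\omega_{V_N}(y)$ against the weights $\omega_{x,y}/\mu^\omega_x$ yields
\[
\bbE^{\omega,+}_N[m^\omega_x(\varphi)] \geq (1-o_N(1))\sqrt{4\overline{g}\log N}.
\]

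I would then split at the threshold $A_N = (1-\eta)\sqrt{4\overline{g}\log N}$ for a small parameter $\eta=\eta(\delta)$,
\[
\bbE^{\omega,+}_N[e^{-\mu^\omega_x m^\omega_x(\varphi)^2/2}] \leq e^{-\mu^\omega_x A_N^2/2} + \bbP^{\omega,+}_N[m^\omega_x(\varphi) < A_N],
\]
bounding the first term by $N^{-2(1-\eta)^2}$ via $\mu^\omega_x\overline{g}\geq 1$. For the second term I plan to decompose $m^\omega_x(\varphi) = \langle \varphi, \eta^\omega_{x, r_N^\varepsilon}\rangle + D_x$, exploiting the superpolynomial concentration of the macroscopic ball average from Proposition~\ref{lem:DecayScalProd} and controlling the local remainder $D_x$ via the exponential Brascamp--Lieb inequality~\eqref{eq:BrascampLieb}. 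The main obstacle is balancing the two terms so as to obtain the rate $N^{-(2-\delta)}$ for arbitrary $\delta > 0$: a naive Brascamp--Lieb bound applied directly to $m^\omega_x(\varphi)$ gives only a fixed polynomial exponent strictly less than $2$. Overcoming this requires leveraging the strict separation $\mu^\omega_x \overline{g} > 1$ (uniform in $\omega$, $x$ by uniform ellipticity and the fact that $P^\omega_x[\widetilde H_x = \infty] < 1$) together with a Dynkin-type control on $\bbE^{\omega,+}_N[D_x]$ derived from the superharmonicity of $\bbE^{\omega,+}_N[\varphi_\cdot]$, allowing one to iteratively refine $\eta$ and tighten the probability estimate.
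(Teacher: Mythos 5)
Your reduction is the same as the paper's: Lemma~\ref{lem:SuperharmonicConditionedExpectation}, the observation that $m^\omega_x(\varphi)\geq 0$ on $\cW^+_N$ for $x\in V_N\setminus\partial_{\mathrm{in}}V_N$ (so $\phi\geq 1/2$), and the lower bound of Proposition~\ref{prop:LBCF} giving $m^\omega_x(\overline{\varphi}^\omega)\geq(1-o(1))\sqrt{4\overline{g}\log N}$ are all correct and are exactly the inputs used in the paper. Your structural inequality $\mu^\omega_x\, g^\omega(x,x)\geq 1$ (hence $\mu^\omega_x\overline{g}\geq 1$) is also correct. However, the central step — bounding $\bbE^{\omega,+}_N[\exp(-\tfrac{\mu^\omega_x}{2}m^\omega_x(\varphi)^2)]$ by $cN^{-(2-\delta)}$ — is not carried out, and the route you propose cannot produce the rate. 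Splitting at a threshold $A_N=(1-\eta)\sqrt{4\overline{g}\log N}$ forces you to pay $\bbP^{\omega,+}_N[m^\omega_x(\varphi)<A_N]$, and under the conditional law the fluctuations of $m^\omega_x(\varphi)$ (or of your remainder $D_x$, after removing the macroscopic ball average) have variance of order one, namely roughly $g^\omega(x,x)-1/\mu^\omega_x$. Brascamp--Lieb then gives at best $\bbP^{\omega,+}_N[m^\omega_x(\varphi)-m^\omega_x(\overline{\varphi}^\omega)<-\eta\sqrt{4\overline{g}\log N}]\leq N^{-2\overline{g}\eta^2/\mathrm{Var}^\omega(m^\omega_x(\varphi))}$, an exponent of order $\eta^2$. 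Balancing this against the deterministic term $N^{-2(1-\eta)^2}$ yields a fixed exponent strictly below $2$, determined by the ratio $\overline{g}/\mathrm{Var}^\omega(m^\omega_x(\varphi))$, no matter how you choose or iterate $\eta$; the superpolynomial concentration of $\langle\varphi,\eta^\omega_{x,r_N^\varepsilon}\rangle$ from Proposition~\ref{lem:DecayScalProd} does not help because the order-one local fluctuation remains. (Also, the ``Dynkin-type'' control of $\bbE^{\omega,+}_N[D_x]$ via superharmonicity goes the wrong way: superharmonicity gives $m^\omega_x(\overline{\varphi}^\omega)\leq\overline{\varphi}^\omega(x)$ and $\overline{\varphi}^\omega(x)\geq\langle\overline{\varphi}^\omega,\eta^\omega_{x,r}\rangle$, which do not combine into a lower bound on $m^\omega_x(\overline{\varphi}^\omega)-\langle\overline{\varphi}^\omega,\eta^\omega_{x,r}\rangle$; and you cannot borrow the matching upper bound of Proposition~\ref{prop:UBCF}, since that is proved \emph{using} the present lemma.)

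The missing idea, which is how the paper closes the argument, is to avoid any indicator splitting and instead integrate the Gaussian weight exactly: one proves (Lemma~\ref{lem:EstimateExpExpectationCF}, adapted from Deuschel--Giacomin) that
\begin{equation}
    \bbE^{\omega,+}_N\Big[\exp\Big(-\tfrac{\mu^\omega_x}{2}m^\omega_x(\varphi)^2\Big)\Big]\leq \exp\Big(-\tfrac{1}{2\overline{g}}\,m^\omega_x(\overline{\varphi}^\omega)^2+c_{10}\,|m^\omega_x(\overline{\varphi}^\omega)|\Big)\,,
\end{equation}
by tilting the centered conditional law with the weight $\exp(-\mu^\omega_x m^\omega_x(\varphi)^2/2)$ and applying Brascamp--Lieb to exponential moments under the tilted measure; the computation $\bbE^\omega[m^\omega_x(\varphi)^2]=g^\omega(x,x)-1/\mu^\omega_x$ is precisely what turns the prefactor $\mu^\omega_x/2$ into the full exponent $1/(2g^\omega(x,x))\geq 1/(2\overline{g})$ (this is where $\mu^\omega_x g^\omega(x,x)\geq 1$ actually enters). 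Combined with Proposition~\ref{prop:LBCF} this yields $N^{-2(1-\delta')^2}$ for every $\delta'>0$, i.e.\ the exponent $2-\delta$, which no threshold argument with order-one conditional fluctuations can reach. To repair your proof you would need to prove an inequality of this Gaussian-integral type rather than refine the splitting.
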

\begin{proof}
    Let $x\in V_N\setminus \partial_{\mathrm{in}}V_N$. Since on the event $\cW^+_N$ it holds for all $\omega \in \Omega_{\lambda,\Lambda}$ that $m^\omega_x(\varphi) \geq 0$, Lemma~\ref{lem:SuperharmonicConditionedExpectation} implies
    \begin{equation}\label{eq:EstimateAinVNminusInternalBoundary}
        0 \leq -\cL^\omega \bbE^{\omega,+}_N[\varphi_\cdot](x) \leq \sqrt{\frac{2}{\pi \mu^\omega_x}} \bbE^{\omega,+}_N \Big[ \exp \Big( -\frac{\mu^\omega_x}{2} m^\omega_x(\varphi)^2 \Big) \Big]  
    \end{equation}
    for every $\omega \in \Omega_{\lambda,\Lambda}$.
    We now claim that there exists a constant $c_{10}>0$ such that for $\bbQ$-almost every $\omega \in \Omega_{\lambda,\Lambda}$, $N \in \bbN$ and $x \in V_N$
    \begin{equation}\label{eq:EstimateExpExpectationCF}
        \bbE^{\omega,+}_N \Big[ \exp \Big( -\frac{\mu^\omega_x}{2} m^\omega_x(\varphi)^2 \Big) \Big] \leq \exp \Big( -\frac{1}{2 \overline{g}} m^\omega_x(\overline{\varphi}^\omega)^2 + c_{10} |m^\omega_x(\overline{\varphi}^\omega)| \Big) \,,
    \end{equation}
    where $\overline{\varphi}^\omega_\cdot = \bbE^{\omega,+}_N [\varphi_\cdot]$. The proof of~\eqref{eq:EstimateExpExpectationCF} is a straightforward adaptation of the one of Lemma 3.4 in~\cite{deuschel1999entropic} and is in Appendix~\ref{secapp:ProofEstimatesExpectationCF}, see Lemma~\ref{lem:EstimateExpExpectationCF}.
    This yields, using that $\overline{\varphi}^\omega$ is non-negative,
    \begin{equation}
            0 \leq -\cL^\omega \bbE^{\omega,+}_N[\varphi_\cdot](x) \leq \sqrt{\frac{2}{\pi \mu^\omega_x}}\exp \Big( -\frac{1}{2 \overline{g}} m^\omega_x(\overline{\varphi}^\omega)^2  +  c_{10}\, m^\omega_x(\overline{\varphi}^\omega) \Big) \,.
    \end{equation}
    We now exploit the lower bound for the expectation of the conditioned field in Proposition~\ref{prop:LBCF} to get that for every $\delta'>0$ we can find a constant $c_9=c_9(\delta') > 0$ such that for $\bbQ$-a.a.\ $\omega \in \Omega_{\lambda,\Lambda}$ there exists $N_0(\omega,\delta') \in \bbN$ such that for all $N \geq N_0(\omega,\delta')$ 
    \begin{equation}
        \sup_{x\in V_N\setminus \partial_{\mathrm{in}}V_N}|\cL^\omega \bbE^{\omega,+}_N[\varphi_\cdot](x)| \leq c_9 N^{-2(1-\delta')^2}
    \end{equation}
    and assuming w.l.o.g. $\delta \in (0,1)$ we have~\eqref{eq:defectharmonicity} taking $\delta' = 1-\sqrt{1-\delta/2}$.
\end{proof}
We are now ready to prove the upper bound for~\eqref{eq:AsymptoticExpectationCF}.
\begin{prop}\label{prop:UBCF}
    For $\bbQ$-a.e.\ $\omega \in \Omega_{\lambda,\Lambda}$
    \begin{equation}\label{eq:UpperBoundExpectationCF}
        \limsup_{N \to \infty} \sup_{x \in \bbZ^d} \Big[ \frac{\bbE^{\omega,+}_N[\varphi_x]}{\sqrt{4 \overline{g} \log N}} - h^\omega_{V_N}(x)   \Big] \leq 0 \,.
    \end{equation}
\end{prop}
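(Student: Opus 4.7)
The plan is to combine the harmonicity defect estimate of Lemma~\ref{lem:defectharmonicity} with an exponential tail bound on a mesoscopic boundary average, mirroring Proposition~\ref{prop:LBCF} but replacing the macroscopic ball $B(x,r_N^\varepsilon)$ by a mesoscopic $B(x,\rho_N)$ with $\rho_N=\lfloor N^{1-\kappa}\rfloor$ for small $\kappa>0$. Fix $\varepsilon\in(0,1)$ and $x\in V_N^\varepsilon$, and pick $\delta\in(0,2\kappa)$ in Lemma~\ref{lem:defectharmonicity}. For $N$ large, $B(x,\rho_N)\subseteq V_N\setminus\partial_\mathrm{in}V_N$, so that Dynkin's formula applied to $u^\omega_N(\cdot)=\bbE^{\omega,+}_N[\varphi_\cdot]$ yields
\begin{equation*}
u^\omega_N(x)=\bbE^{\omega,+}_N\big[\langle\varphi,\eta^\omega_{x,\rho_N}\rangle\big]+E^\omega_x\bigg[\int_0^{T_{B(x,\rho_N)}}(-\cL^\omega u^\omega_N)(X_s)\,\De s\bigg]\leq \bbE^{\omega,+}_N\big[\langle\varphi,\eta^\omega_{x,\rho_N}\rangle\big]+c_9(\delta)\,\frac{C\rho_N^2}{N^{2-\delta}},
\end{equation*}
where the last term is $o(1)$ by the choice of parameters. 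It therefore remains to upper bound the mesoscopic average $\bbE^{\omega,+}_N[\langle\varphi,\eta^\omega_{x,\rho_N}\rangle]$ by $(1+\epsilon)\sqrt{4\overline{g}\log N}+o(\sqrt{\log N})$, uniformly in $x\in V_N^\varepsilon$ and for any fixed $\epsilon>0$.

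The crucial step is then to show
\begin{equation*}
\limsup_{N\to\infty}\sup_{x\in V_N^\varepsilon}\frac{1}{N^{d-2}\log N}\log\bbP^{\omega,+}_N\Big[\langle\varphi,\eta^\omega_{x,\rho_N}\rangle>(1+\epsilon)\sqrt{4\overline{g}\log N}\Big]\leq -c,
\end{equation*}
following the strategy of Proposition~\ref{lem:DecayScalProd} in the reverse direction. The inclusion $\cW^+_N\subseteq\{\sup_{f\in\cF}Z^\omega_f\geq\sqrt{a_\delta\log N}\}$ from Proposition~\ref{prop:NegligibleEvent} gives, for any $\beta>0$,
\begin{equation*}
\bbP^\omega\big[\cW^+_N\cap\{\langle\varphi,\eta^\omega_{x,\rho_N}\rangle\geq t\}\big]\leq \bbP^\omega\Big[\sup_{f\in\cF}\big(Z^\omega_f+\beta\langle\varphi,\eta^\omega_{x,\rho_N}\rangle\big)\geq\sqrt{a_\delta\log N}+\beta t\Big],
\end{equation*}
and the right-hand side is controlled by Borell--TIS once a suitable variance bound on $Z^\omega_f+\beta\langle\varphi,\eta^\omega_{x,\rho_N}\rangle$ is available. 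Such a bound, of order $(1+\beta)^2/\capa^\omega(S)\cdot(1+o(1))+c_6\beta^2\rho_N^{2-d}$, is produced by repeating the capacitary computation in the proof of Theorem~\ref{thm:ZfbetabBounds} with $\gamma^\omega$ in place of $\widetilde{\gamma}^\omega$, since we now need an upper rather than a lower bound on the scalar product $\bbE^\omega[Z^\omega_f\langle\varphi,\eta^\omega_{x,\rho_N}\rangle]$. Dividing by the lower bound $\bbP^\omega[\cW^+_N]\geq\exp(-(2\overline{g}\capa^{\mathrm{hom}}(V)+o(1))N^{d-2}\log N)$ from Proposition~\ref{prop:LowerBoundHW}, invoking solidification (Corollary~\ref{cor:SolidificationCapacity}) and homogenization~\eqref{eq:hom_capacity} to identify $\capa^\omega(S)$ with $\capa^{\mathrm{hom}}(V)N^{d-2}(1+o(1))$, and optimizing the choice of $\beta=\beta_N\to 0$ to balance the shift $\beta t$ against the extra variance term $c_6\beta^2\rho_N^{2-d}$, yields the desired exponential decay. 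Since on $\cW^+_N$ one has $\langle\varphi,\eta^\omega_{x,\rho_N}\rangle\geq 0$, integrating this tail bound and using the Brascamp--Lieb inequality~\eqref{eq:BrascampLieb} to control the far tail gives $\bbE^{\omega,+}_N[\langle\varphi,\eta^\omega_{x,\rho_N}\rangle]\leq(1+\epsilon)\sqrt{4\overline{g}\log N}+o(1)$.

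The remaining cases are handled as follows. For $x\in V_N^c$, by~\eqref{eq:PreciseExpression-LCF} $u^\omega_N$ is $\cL^\omega$-harmonic outside $V_N$ and vanishes at infinity (Remark~\ref{rem:DecayCondExpectationAtInf}), hence $u^\omega_N(x)=E^\omega_x[u^\omega_N(X_{H_{V_N}}),H_{V_N}<\infty]\leq\big(\sup_{y\in V_N}u^\omega_N(y)\big)h^\omega_{V_N}(x)$, so that the bulk bound propagates outside $V_N$. For $x\in V_N\setminus V_N^\varepsilon$, rerun the Dynkin step using the extension of Lemma~\ref{lem:defectharmonicity} to all of $\bbZ^d$ (Lemma~\ref{lem:ExtensionControlLu}) together with a ball that may straddle $\partial V_N$, and control the contribution of visits to $V_N^c$ via the propagation just described. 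The uniform convergence $\sup_x|h^\omega_{V_N^\varepsilon}(x)-h^\omega_{V_N}(x)|\to 0$ from~\eqref{eq:SolidificationEquilibriumPotential} then yields~\eqref{eq:UpperBoundExpectationCF} after letting $\varepsilon\to 0$. The main obstacle is precisely the delicate balance in the tail bound: the extra variance $c_6\beta^2\rho_N^{2-d}$ must be kept subdominant with respect to $1/\capa^\omega(S)\sim N^{2-d}$, which forces $\beta(N/\rho_N)^{(d-2)/2}\to 0$, while the shift $\beta t\sim\beta\sqrt{\log N}$ must be large enough for the Borell--TIS exponent to beat the entropy cost $\sim 2\overline{g}\capa^{\mathrm{hom}}(V)N^{d-2}\log N$; the mesoscopic scale $\rho_N=\lfloor N^{1-\kappa}\rfloor$ with $\kappa>0$ small, together with $\beta\sim\epsilon(\rho_N/N)^{d-2}$, reconciles both constraints and produces super-polynomial decay in $N$.
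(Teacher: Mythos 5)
Your first step (Lemma~\ref{lem:defectharmonicity} plus a mesoscopic averaging radius $\rho_N=N^{1-\kappa}$, with error $c_9\rho_N^2/N^{2-\delta}=o(1)$) is exactly the paper's opening move. The gap is in your ``crucial step'': the claimed bound $\limsup_N\sup_x (N^{d-2}\log N)^{-1}\log\bbP^{\omega,+}_N[\langle\varphi,\eta^\omega_{x,\rho_N}\rangle>(1+\epsilon)\sqrt{4\overline{g}\log N}]\leq-c$ cannot be obtained by the mechanism you propose, and is in fact too strong to be true at this speed. Run your own arithmetic: after Borell--TIS, the exponent you can extract is at most
\begin{equation*}
\frac{4\overline{g}\log N\,\bigl(\sqrt{\delta}+\beta(1+\epsilon)\bigr)^2}{2\Bigl[\tfrac{1+2\beta(1+o(1))}{\capa^\omega(S)}+c_6\beta^2\rho_N^{2-d}\Bigr]}\,,
\end{equation*}
where the cross term now enters with a \emph{plus} sign (it increases the variance), and the good-box threshold forces $a_\delta=4\delta\overline{g}$ with $\delta<1$ fixed before $N\to\infty$ (Proposition~\ref{prop:NegligibleEvent} needs a fixed margin $1-\delta$). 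You then face a dichotomy. If $\beta$ is bounded away from $0$, the term $c_6\beta^2\rho_N^{2-d}\capa^\omega(S)\sim\beta^2(N/\rho_N)^{d-2}\to\infty$ collapses the exponent to order $\rho_N^{d-2}\log N\ll N^{d-2}\log N$. If instead $\beta=\beta_N\to0$ (as your choice $\beta\sim\epsilon(\rho_N/N)^{d-2}$ requires), the gain $\beta_N\epsilon$ in the numerator vanishes and the exponent tends to $2\overline{g}\,\delta\,\capa^{\mathrm{hom}}(V)N^{d-2}\log N$, which is \emph{strictly below} the cost $2\overline{g}\,\capa^{\mathrm{hom}}(V)N^{d-2}\log N$ you must subtract via Proposition~\ref{prop:LowerBoundHW}; the resulting bound on the conditional probability exceeds $1$ and is vacuous. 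There is no intermediate tuning: the constraint $\beta_N\ll(\rho_N/N)^{(d-2)/2}$ is polynomially small, while the deficit $1-\delta$ is a fixed constant (and cannot be sent to $0$ with $N$ without destroying the Bernoulli concentration behind Proposition~\ref{prop:NegligibleEvent}).

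The failure is structural, not just a matter of constants: conditionally on $\cW^+_N$, pushing a \emph{mesoscopic} boundary average up by $\epsilon\sqrt{\log N}$ only costs of order $\rho_N^{d-2}\log N$ in the exponent, so $\bbP^\omega[\cW^+_N\cap E]$ and $\bbP^\omega[\cW^+_N]$ agree to leading order $N^{d-2}\log N$, and any argument that controls the numerator only to that precision (which is all the $Z_f$/solidification machinery, with its losses from $\delta<1$, $K<\infty$, $\widetilde{\varepsilon}>0$, can deliver) cannot detect the event. This is exactly why the lower bound in Proposition~\ref{lem:DecayScalProd} works only for \emph{downward} deviations of a \emph{macroscopic} average, where the cross term has the favorable (negative) sign and $r^{2-d}\capa^\omega(S)=O(1)$ permits a fixed $\beta$. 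The paper's proof of Proposition~\ref{prop:UBCF} therefore abandons tail estimates altogether for the upper bound: it bounds $\bbE^{\omega,+}_N[\langle\eta^\omega_{x,r},\varphi\rangle]$ directly by comparing, via~\eqref{eq:FKG3}, with the field shifted by $\alpha_N=(1+\delta)\sqrt{4\overline{g}\log N}$ conditioned to be positive, and then applies the entropy inequality~\eqref{eq:RelEntropyInequal1} with tilt $\beta\langle\eta^\omega_{x,r},\varphi\rangle$; since $\bbP^\omega_\alpha[\cW^+_N]\geq\exp(-cN^{d-2(1+\delta)^2}/\sqrt{\log N})$ is sub-leading and $\bbV\mathrm{ar}^\omega(\langle\eta^\omega_{x,r},\varphi\rangle)\leq c_6r^{2-d}$, optimizing $\beta$ yields an $O(1)$ error. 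Your treatment of $x\in V_N^c$ matches the paper, and $x\in V_N\setminus V_N^\varepsilon$ is handled there more simply via~\eqref{eq:FKG2} and the bulk bound for $V_{2N}$; but without a valid replacement for your step 2 the proposal does not prove the proposition.
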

\begin{proof}
    The result is readily implied by the fact that $h^\omega_{V_N}$ is bounded if we show that for every $\delta \in (0,1)$ for $\bbQ$-a.a.\ $\omega \in \Omega_{\lambda,\Lambda}$ we have
    \begin{equation}\label{eq:UpperBoundExpectationCFNotHomogenized}
        \limsup_{N \to \infty} \sup_{x \in \bbZ^d} \Big[ \frac{\bbE^{\omega,+}_N[\varphi_x]}{\sqrt{4 \overline{g} \log N}} - (1+\delta)h^\omega_{V_N}(x)   \Big] \leq 0 \,.
    \end{equation}
    Fix $\delta \in (0,1)$.
    Let $u^\omega_N(x)=\bbE^{\omega,+}_N[\varphi_x]$, $x \in \bbZ^d$. Then by Lemma~\ref{lem:defectharmonicity} for every $\delta'>0$ for $\bbQ$-a.a.\ $\omega \in \Omega_{\lambda,\Lambda}$ there exists $c_9=c_9(\delta')>0$ such that we can find $N_0(\omega,\delta') \in \bbN$ such that for every $N \geq N_0(\omega,\delta')$
    \begin{equation}\label{eq:LabelDefectHarmonicityU}
        \sup_{x\in V_N\setminus \partial_{\mathrm{in}}V_N} |\cL^\omega u^\omega_N(x)| \leq \frac{c_9}{N^{2-\delta'}} \,.
    \end{equation}
    Consider $r = r(N)= N^{1-\delta'}$, so that $B(x,r)\subseteq V_N$ for every $x \in V_N^\varepsilon$, $\varepsilon \in (0,1)$. Let the operator $G^\omega_{B(x,r)}$ be defined as $G^\omega_{B(x,r)}f(z) = \langle g^\omega_{B(x,r)}(z,\cdot), f(\cdot)\rangle_{\mu^\omega}$ for $f \in \bbR^{\bbZ^d}$, $z\in\bbZ^d$ (recall the definition of the Green function for the walk killed upon exiting $B(x,r)$ in~\eqref{eq:KilledGreenFunction}).
    Since $(-\cL^\omega g^\omega_{B(x,r)}(z,\cdot))(y) = \delta_z(y)/\mu^\omega_z$ for all $z\in B(x,r)$, $y \in \bbZ^d$, denoting by $\IND$ the identity function on $\bbZ^d$, we have that $-\cL^\omega G^\omega_{B(x,r)} \IND=1$ on $B(x,r)$.
    It follows by \eqref{eq:LabelDefectHarmonicityU} that for $x \in V_N^\varepsilon$ the function $u^\omega_N - c_9 N^{-2+\delta'} G^\omega_{B(x,r)} \IND$ is subharmonic on $B(x,r)$ and is equal to $u^\omega_N$ on $\partial B(x,r)$, hence for all $x \in V_N^\varepsilon$
    \begin{equation}\label{eq:SubharmonicEstimateU}
        u^\omega_N(x) - c_9 N^{-2+\delta'} G^\omega_{B(x,r)} \IND(x) \leq \langle \eta^\omega_{x,r}, u^\omega_N \rangle \, 
    \end{equation}
    (recall the definition of $\eta^\omega_{x,r}$ in~\eqref{eq:DefinitionEta}).
    Note that $G^\omega_{B(x,r)} \IND(x) \leq \bbE^\omega_x[T_{B(x,r)}]/2d\lambda \leq c r^2$, hence
    \begin{equation}\label{eq:EstimateCEFromAbove}
        \sup_{x \in V_N^\varepsilon} [u^\omega_N(x) - \langle \eta^\omega_{x,r}, u^\omega_N\rangle ]  \leq c'\frac{r^2}{N^{2-\delta'}} \,.
    \end{equation}

    We now want to estimate
    $\langle \eta^\omega_{x,r}, u^\omega_N \rangle = \bbE^{\omega,+}_N [\langle \eta^\omega_{x,r}, \varphi \rangle ]$.
Let $\alpha = 4 \overline{g}(1+\delta)^2$ and $\alpha_N = \sqrt{\alpha \log N}$, and define the probability measure $\bbP^\omega_\alpha = \bbP^\omega T_\psi^{-1}$ with associated expectation $\bbE^\omega_\alpha$, where $\psi :\bbZ^d \rightarrow \bbR$ is the field with constant value $\alpha_N$ and $T_\psi$ is the translation defined in~\eqref{eq:TranslationOperator}. Let $\bbP^{\omega,+}_{N,\alpha} [\,\cdot\,] = \bbP^{\omega}_\alpha [\,\cdot\, | \cW^+_N]$ and denote by $\bbE^{\omega,+}_{N,\alpha}$ the associated expectation. By the inequality~\eqref{eq:FKG3} we get that for every $\varepsilon \in (0,1)$
    \begin{equation}\label{eq:EstimateExpCF2}
        \bbE^{\omega,+}_N[ \langle \eta^\omega_{x,r} , \varphi \rangle ] \leq \bbE^{\omega,+}_{N,\alpha} [ \langle \eta^\omega_{x,r} , \varphi \rangle ] \,, \qquad x \in V_N^\varepsilon \,.
    \end{equation}
    For $\beta >0$ consider $F(\varphi) = \beta \langle \eta^\omega_{x,r} , \varphi \rangle $. By the relative entropy inequality~\eqref{eq:RelEntropyInequal1} applied to $F$
    \begin{equation}
        \begin{aligned}\label{eq:EstimateWithFEntropyIneq}
            \bbE^{\omega,+}_{N,\alpha} [\langle \eta^\omega_{x,r} , \varphi \rangle] &\leq \frac{1}{\beta} (\log \bbE^\omega_\alpha [\exp (\beta \langle \eta^\omega_{x,r} , \varphi \rangle)] - \log \bbP^\omega_\alpha [\cW^+_N] )\\
            & = \frac{\beta}{2} \bbV\mathrm{ar}^\omega[\langle \eta^\omega_{x,r},\varphi\rangle] - \frac{1}{\beta} \log \bbP^\omega_\alpha [\cW^+_N] + \alpha_N \,,
        \end{aligned}
    \end{equation}
    having used in the last step that $\langle \eta^\omega_{x,f} , \varphi \rangle $ under $\bbP^\omega$ is a centered Gaussian.
    Optimizing over $\beta >0$ we have
    \begin{equation}\label{eq:EstimateDotProdEtaU}
        \langle \eta^\omega_{x,r}, u^\omega_N \rangle \leq \sqrt{-2 \bbV\mathrm{ar}^\omega(\langle \eta^\omega_{x,r},\varphi \rangle) \log(\bbP^\omega_\alpha[\cW^+_N])} + \alpha_N \,.
    \end{equation}
    By the same computation as in~\eqref{eq:VarDotProdBound} we get that
    \begin{equation}\label{eq:VarDotProdBound2}
        \bbV\mathrm{ar}^\omega(\langle \eta^\omega_{x,r},\varphi \rangle) \leq c_6 r^{2-d}
    \end{equation}
    where the constant $c_6$ is independent of $\omega \in \Omega_{\lambda,\Lambda}$ and $x \in \bbZ^d$.
    Furthermore, by the FKG inequality~\eqref{eq:FKGproduct}, denoting by $\phi$ the standard normal cumulative distribution function, we have for $N$ large enough
    \begin{equation}
        \begin{aligned}
            \bbP^\omega_\alpha [\cW^+_N] &\geq \prod_{y \in V_N} \bbP^\omega [\varphi_y \geq -\alpha_N] \geq  \phi (  \alpha_N /\sqrt{\,\overline{g}})^{|V_N|} \\
            & \geq \Big[ 1- \frac{c}{\alpha_N / \sqrt{\, \overline{g}}} \exp(\alpha_N^2/2\overline{g}) \Big]^{cN^d} \geq \exp \Big( - \frac{c'}{\sqrt{\log N}} N^{d-\alpha/2\overline{g}} \Big) \,,
        \end{aligned}
    \end{equation}
    having used in the third step that $\phi(x) \geq 1 - \tfrac{1}{\sqrt{2\pi} x} \exp(-x^2/2)$ for $x > 0$.
    Then, keeping in mind that $r=N^{1-\delta'}$, and \eqref{eq:VarDotProdBound2}, we get from~\eqref{eq:EstimateDotProdEtaU} that for $N$ large enough
    \begin{equation}
        \sup_{x \in V_N^\varepsilon}\Big\{ \langle \eta^\omega_{x,r},u^\omega_N\rangle + C'' \Big(\frac{1}{\sqrt{\log N}} N^{d-\alpha/2\overline{g}-(d-2)(1-\delta')} \Big)^{1/2} \Big\} \leq \alpha_N\,.
    \end{equation}
    Then \eqref{eq:EstimateCEFromAbove} yields that there exists $c=c(\delta')>0$ such that for $\bbQ$-a.a.\ $\omega \in \Omega_{\lambda,\Lambda}$ and $N \geq N_0(\omega,\delta')$
    \begin{equation}
         \sup_{x \in V_N^\varepsilon} \Big[ u^\omega_N(x) - C'' \Big(\frac{1}{\sqrt{\log N}} N^{d-\alpha/2\overline{g}-(d-2)(1-\delta')} \Big)^{1/2} - \alpha_N \Big] \leq   \frac{c'}{N^{\delta'}} \,.
    \end{equation}
    Recalling the definition of $\alpha$, the exponent of $N$ in the middle term is equal to $-2(1+\delta)^2 +2(1-\delta') +\delta' d$ and by taking $\delta' \leq 2\delta(\delta+2)/(d-2)$ it is non-positive. Thus, letting $N \to \infty$ we have for $\bbQ$-a.a.\ $\omega \in \Omega_{\lambda,\Lambda}$
    \begin{equation}\label{eq:EstimateExpCF3}
            \limsup_{N\to\infty} \sup_{x\in V_N^\varepsilon} \left[ \bbE^{\omega,+}_N[\varphi_x]- (1+\delta)\sqrt{4\overline{g}\log N}  \right] \leq 0\,,
    \end{equation}
    hence we get~\eqref{eq:UpperBoundExpectationCFNotHomogenized} for $x \in V_N^\varepsilon$.
    When $x \in V_N \setminus V_N^\varepsilon$ we just use the inequality~\eqref{eq:FKG2} to get
    \begin{equation}\label{eq:EstimateExpCF4}
        \begin{aligned}
        &\limsup_{N \to \infty} \sup_{x\in V_N\setminus V_N^\varepsilon}\Big[\bbE^{\omega,+}_N[\varphi_x] - (1+\delta)\sqrt{4 \overline{g} \log N}\Big]\\
        &\quad \leq \limsup_{N \to \infty} \sup_{x\in V_N\setminus V_N^\varepsilon}\Big[ \bbE^\omega[ \varphi_x | \varphi_y \geq 0 , \, \forall y \in V_{2N}]  - (1+\delta)\sqrt{4 \overline{g} \log N}\Big] \leq 0 \,,
    \end{aligned}
    \end{equation}
    where the last inequality is \eqref{eq:EstimateExpCF3} applied to $V_{2N}$.
    For $x \in V_N^c$, since the function $\bbE^{\omega,+}_N[\varphi_\cdot]$ is harmonic on $V_N^c$ (see~\eqref{eq:PreciseExpression-LCF}) and converges to $0$ as $|x|\to \infty$ (see Remark \ref{rem:DecayCondExpectationAtInf}), we have that
    \begin{equation}
        \bbE^{\omega,+}_N[\varphi_x] = \sum_{y \in \partial_{\mathrm{in}}V_N} P^\omega_x[X_{H_{V_N}} = y, H_{V_N} < \infty] \bbE^{\omega,+}_N[\varphi_y] \,.
    \end{equation} 
    Then \eqref{eq:EstimateExpCF4} yields
    \begin{equation}
        \begin{aligned}
        &\limsup_{N\to\infty}\sup_{x \in V_N^c} \Big( \bbE^{\omega,+}_N[\varphi_x] - (1+\delta)\sqrt{4 \overline{g} \log N}h^\omega_{V_N}(x)   \Big) \\
        &\quad=\limsup_{N\to\infty}\sup_{x \in V_N^c}  \sum_{y \in \partial_{\mathrm{in}}V_N} P^\omega_x[X_{H_{V_N}} = y, H_{V_N} < \infty] \Big( \bbE^{\omega,+}_N[\varphi_y] - (1+\delta)\sqrt{4 \overline{g} \log N}   \Big) \\
        &\quad \leq  \limsup_{N\to\infty}  \sup_{y \in \partial_{\mathrm{in}}V_N} \Big[ \bbE^{\omega,+}_N[\varphi_y] - (1+\delta)\sqrt{4 \overline{g} \log N}\Big]\vee 0 \leq 0 \,,
        \end{aligned}
    \end{equation}
    so that we have~\eqref{eq:UpperBoundExpectationCFNotHomogenized} also for $x \in V_N^c$.
\end{proof}

We are now ready to conclude the proof of Theorem~\ref{thm:MainTheorem}, ii).
\begin{proof}[Proof of Theorem~\ref{thm:MainTheorem}, ii)]
By combining the lower bound in Proposition~\ref{prop:LBCF} and the upper bound in Proposition~\ref{prop:UBCF} we have that 
\begin{equation}
    \lim_{N \to \infty} \sup_{x \in \bbZ^d} \Big| \frac{\bbE^{\omega,+}_N[\varphi_x]}{\sqrt{4 \overline{g} \log N}} - h^\omega_{V_N}(x)   \Big| = 0
\end{equation}
for $\bbQ$-a.e.\ $\omega \in \Omega_{\lambda,\Lambda}$. Then, by Lemma \ref{lem:UnifHarmonicPotential} we have that  $\bbQ$-a.s.\ 
\begin{equation}
    \lim_{N \to \infty} \sup_{x \in \bbZ^d} \Big| \frac{\bbE^{\omega,+}_N[\varphi_x]}{\sqrt{4 \overline{g} \log N}} - \mathscr{h}^{\mathrm{hom}}_{V}(x/N)   \Big| = 0 \,,
\end{equation}
ending the proof of Theorem~\ref{thm:MainTheorem}, ii).
\end{proof}

The upper bound in Proposition~\ref{prop:UBCF} can be used to extend the uniform control on $\cL^\omega \bbE^{\omega,+}_N[\varphi_\cdot]$ in Lemma \ref{lem:defectharmonicity} also over the internal boundary of $V_N$.
The proof proceeds analogously to the computations from (3.32) to (3.36) of~\cite{deuschel1999entropic}, but we put it here for consistency.
\begin{lemma}\label{lem:ExtensionControlLu}
    For every $\delta >0$ there exists $c_{11}=c_{11}(\delta)>0$ such that for $\bbQ$-a.a.\ $\omega \in \Omega_{\lambda,\Lambda}$ there exists $N_0(\omega,\delta) \in \bbN$ such that for every $N \geq N_0(\omega,\delta)$
    \begin{equation}\label{eq:defectharmonicityFull}
        \sup_{x\in\bbZ^d}|\cL^\omega \bbE^{\omega,+}_N[\varphi_\cdot](x)| \leq \frac{c_{11}}{N^{2-\delta}} \,.
    \end{equation}
\end{lemma}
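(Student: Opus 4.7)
The plan is to extend Lemma~\ref{lem:defectharmonicity} from $V_N\setminus\partial_{\mathrm{in}}V_N$ to all of $\bbZ^d$. Outside $V_N$ the bound is trivial by \eqref{eq:PreciseExpression-LCF}, so only points $x\in\partial_{\mathrm{in}}V_N$ remain. For such an $x$ I would again start from the identity
\[
0\leq -\cL^\omega \bbE^{\omega,+}_N[\varphi_\cdot](x) = \frac{1}{\sqrt{2\pi\mu_x^\omega}}\,\bbE^{\omega,+}_N\Big[\frac{\exp(-\mu_x^\omega m_x^\omega(\varphi)^2/2)}{\phi(m_x^\omega(\varphi)\sqrt{\mu_x^\omega})}\Big]
\]
furnished by Lemma~\ref{lem:SuperharmonicConditionedExpectation}. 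The new difficulty is that $m_x^\omega(\varphi)$ is no longer guaranteed to be non-negative on $\cW^+_N$, since $x$ has at least one neighbor outside $V_N$, so the trivial bound $\phi(m_x^\omega(\varphi)\sqrt{\mu_x^\omega})\geq 1/2$ used in Lemma~\ref{lem:defectharmonicity} fails. I would therefore decompose $m_x^\omega(\varphi)=M_x^++M_x^-$ into the contributions from neighbors inside and outside $V_N$, observe that $M_x^+\geq 0$ on $\cW^+_N$ so that $\{m_x^\omega(\varphi)<0\}\subseteq\{M_x^-<0\}$ with $|m_x^\omega(\varphi)|\leq |M_x^-|$ there, and split the expectation according to the sign of $m_x^\omega(\varphi)$.

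On $\{m_x^\omega(\varphi)\geq 0\}$ the argument of Lemma~\ref{lem:defectharmonicity} goes through unchanged, provided one knows that the uniform lower bound $m_x^\omega(\overline\varphi^\omega)\geq(1-o(1))\sqrt{4\overline g\log N}$ still holds at the boundary; this follows from Proposition~\ref{prop:LBCF} together with the fact that $h^\omega_{V_N}(y)\to 1$ uniformly over neighbors $y$ of $\partial_{\mathrm{in}} V_N$, a consequence of Lemma~\ref{lem:UnifHarmonicPotential} and the continuity of $\mathscr{h}^{\mathrm{hom}}_V$ with value $1$ on $\overline V$ coming from strong regularity. Plugging this lower bound into claim~\eqref{eq:EstimateExpExpectationCF} produces the desired $N^{-2+\delta/2}$. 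On $\{m_x^\omega(\varphi)<0\}$ I would instead use the elementary Mills-ratio-type bound $e^{-s^2/2}/\phi(s)\leq C(1+|s|)$ valid for $s\leq 0$, which reduces the estimate to controlling
\[
\bbP^{\omega,+}_N[M_x^-<0]\quad\text{and}\quad\bbE^{\omega,+}_N\big[|M_x^-|\,\IND_{M_x^-<0}\big].
\]

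The main obstacle is to show that both of these quantities decay at rate $N^{-2+\delta}$, matching the interior estimate. For the tail I would use the exponential Brascamp-Lieb inequality~\eqref{eq:BrascampLieb} to obtain
\[
\bbP^{\omega,+}_N[M_x^-<0]\leq \exp\!\Big(-\tfrac{(\bar M_x^-)^2}{2\,\bbV\mathrm{ar}^\omega(M_x^-)}\Big),\qquad \bar M_x^-:=\bbE^{\omega,+}_N[M_x^-].
\]
Writing $\eta_x=\mu_x^{-\omega}\sum_{y\sim x,\,y\notin V_N}\omega_{x,y}>0$, the key cancellation is that $\bar M_x^-\geq \eta_x\sqrt{4\overline g\log N}\,(1-o(1))$ by the uniform lower bound on the conditioned expectation, while $\bbV\mathrm{ar}^\omega(M_x^-)\leq \overline g\,\eta_x^{\,2}$ by Cauchy-Schwarz on the covariance ($g^\omega(y,y')\leq\overline g$). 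The $\eta_x^{\,2}$ cancels and the ratio in the exponent is at least $2(1-o(1))\log N$, yielding the $N^{-2+o(1)}$ tail. The truncated expectation is then handled by Hölder's inequality with an exponent $p=p(\delta)$ large enough to upgrade the $N^{-2+o(1)}$ tail to $N^{-2+\delta}$, using an $L^p$ moment bound $\|M_x^--\bar M_x^-\|_{L^p(\bbP^{\omega,+}_N)}\leq c\sqrt{p}\,$ (i.e.\ $\|M_x^-\|_p\lesssim \sqrt{\log N}$) obtained once more from Brascamp-Lieb. Summing the two contributions and multiplying by $1/\sqrt{2\pi\mu_x^\omega}$ delivers the required bound $cN^{-2+\delta}$ on $\partial_{\mathrm{in}}V_N$ and completes the proof.
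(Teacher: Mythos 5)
Your proposal is correct and follows essentially the same route as the paper's proof: reduce to $x\in\partial_{\mathrm{in}}V_N$, split according to the sign of $m^\omega_x(\varphi)$, bound the non-negative part through~\eqref{eq:EstimateExpExpectationCF} together with Proposition~\ref{prop:LBCF} and the fact that $h^\omega_{V_N}\to 1$ uniformly near $\partial_{\mathrm{in}}V_N$ (via Lemma~\ref{lem:UnifHarmonicPotential} and strong regularity), and treat the negative part with the Mills-ratio bound, H\"older's inequality, an exponential Chebyshev/Brascamp--Lieb estimate for the tail, and Brascamp--Lieb $L^p$ moments combined with Proposition~\ref{prop:UBCF}. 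The only deviation is your decomposition $m^\omega_x=M_x^++M_x^-$, which is harmless but unnecessary: the paper applies the same tail argument directly to $m^\omega_x(\varphi)$, whose conditional mean also enjoys the $(1-o(1))\sqrt{4\overline{g}\log N}$ lower bound and whose $\bbP^\omega$-variance is at most $\overline{g}$.
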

    \begin{proof}
    By Lemma~\ref{lem:defectharmonicity} and Lemma~\ref{lem:SuperharmonicConditionedExpectation}, we just have to prove the claim for $x \in \partial_{\mathrm{in}} V_N$. We define the event $F^\omega_x = \{ \varphi \in \bbR^{\bbZ^d} \,:\, m^\omega_x(\varphi) \leq 0 \}$. Then Lemma~\ref{lem:SuperharmonicConditionedExpectation} yields
    \begin{equation}\label{eq:EstimateAOnBoundary}
        |\cL^\omega \bbE^{\omega,+}_N[\varphi_\cdot](x)| \leq c \Big\{ \bbE^{\omega,+}_N \Big[ \exp \Big( - \frac{\mu^\omega_x}{2} m^\omega_x(\varphi)^2 \Big) \Big] + \bbE^{\omega,+}_N\Big[\frac{ \exp(- \mu^\omega_x m^\omega_x(\varphi)^2/2 ) \IND_{F^\omega_x} }{\phi(m^\omega_x(\varphi)\sqrt{\mu^\omega_x})} \Big] \Big\} \,.
    \end{equation}
    Proceeding as in the proof of Lemma \ref{lem:defectharmonicity}, we can show that the first term in the right-hand side is bounded by $c\,N^{-2+\delta}$ for $N$ large enough and $c$ independent of $\omega$. By $\mathrm{e}^{-r^2/2}/\int_r^\infty \mathrm{e}^{-s^2/2}\De s \leq 2+r$ for $r \geq 0$, and H\"older inequality, the second term is bounded by
    \begin{equation}\label{eq:EstimatorSecondTermRegularityCF}
        \bbP^{\omega,+}_N[F^\omega_x]^{1/q} \, \bbE^{\omega,+}_N [ (2+\sqrt{\mu^\omega_x}|m^\omega_x(\varphi)| )^p ]^{1/p}
    \end{equation}
    for every $p,q \geq 1$ such that $1/p+1/q=1$.
    The factor $\bbP^{\omega,+}_N[F^\omega_x]$ can be estimated in the following way for $\bbQ$-a.a.\ $\omega$:
    \begin{equation}
        \begin{aligned}
            \bbP^{\omega,+}_N[F^\omega_x] &\leq \inf_{\beta > 0} \bbP^{\omega,+}_N \Big[ \mathrm{e}^{-\beta (m^\omega_x(\varphi) - \bbE^{\omega,+}_N[m^\omega_x(\varphi)])} \geq \mathrm{e}^{\beta \bbE^{\omega,+}_N[m^\omega_x(\varphi)] } \Big] \\
            &\leq \inf_{\beta > 0} \mathrm{e}^{-\beta \bbE^{\omega,+}_N[m^\omega_x(\varphi)]} \bbE^\omega [\mathrm{e}^{-\beta m^\omega_x(\varphi)} ] \leq \mathrm{e}^{-\bbE^{\omega,+}_N[m^\omega_x(\varphi)]^2/2 \overline{g}} \,,
        \end{aligned}
    \end{equation}
    having used in the second step Markov inequality and Brascamp-Lieb inequality~\eqref{eq:BrascampLieb} with $u(r)=\mathrm{e}^r$.
    Using now the lower bound on the decay rate for the expectation of the conditioned field in Proposition~\ref{prop:LBCF}, we get that for every $\delta' \in (0,1)$, for $\bbQ$-a.a.\ $\omega \in \Omega_{\lambda,\Lambda}$, for $N$ large enough
    \begin{equation}\label{eq:EstimateFQ}
        \sup_{x \in \partial_{\mathrm{in}}V_N}\bbP^{\omega,+}_N [F^\omega_x] \leq c' \frac{1}{N^{2(1-\delta')^2}} \,.
    \end{equation}
    The second factor in~\eqref{eq:EstimateAOnBoundary} can be estimated using the triangle inequality and the Brascamp-Lieb inequality~\eqref{eq:BrascampLieb} with $u(r)=|r|^p$, that yield
    \begin{equation}\label{eq:EstimateMP}
        \bbE^{\omega,+}_N [ (2+\sqrt{\mu^\omega_x}|m^\omega_x(\varphi)| )^p ]^{1/p} \leq c \Big(1+  \sum_{y \,:\, y \sim x} \omega_{x,y} \bbE^\omega [|\varphi_y|^p]^{1/p} +  \sum_{y \,:\, y \sim x} \omega_{x,y} \bbE^{\omega,+}_N[\varphi_y] \Big) \,.
    \end{equation}
    Since the conductances $\omega_{x,y}$ are uniformly bounded away from zero and infinity, the first sum inside the parenthesis in the right-hand side is bounded by a constant $c(p)>0$ independent of $N$ and $x$. By the upper bound on the conditioned expectation in Proposition~\ref{prop:UBCF} we can bound the second sum by $c \sqrt{\log N}$, hence choosing $q<2(1-\delta')/(2-\delta)$ and $\delta'<\delta/2$ we get \eqref{eq:defectharmonicityFull} also for $x \in \partial_{\mathrm{in}}V_N$.
\end{proof}

\section{Entropic repulsion: Pathwise characterization}\label{sec:ProofEntropicRepulsion}
In this section we characterize the pathwise behavior of the conditioned field under $\bbP^{\omega,+}_N$ proving Theorem~\ref{thm:MainTheorem}, iii). We end the section with a thorough discussion on the result and about possible extensions.

For the proof of the pathwise entropic repulsion we follow the strategy of the one of Proposition 2.1 in~\cite{deuschel1999entropic}. For convenience of the reader, in the following we go through the main steps and highlight that the most novel part comes in step 3.

\vspace{0.3\baselineskip}

\noindent \textbf{1.} \emph{Tightness of the conditioned and centered fields sequence}:

One defines the law of the conditioned and centered field
\begin{equation}\label{eq:LawCFRecentered}
    \hat{\bbP}^{\omega}_N = \bbP^{\omega,+}_N T_{\psi}^{-1}, \qquad \text{with }\, \psi = -\bbE^{\omega,+}_N[\varphi_\cdot] \,,
\end{equation}
and shows that the sequence $\{ \hat{\bbP}^{\omega}_N \}_{N \in \bbN}$ is tight, since the $\hat{\bbP}^{\omega}_N$-variance of the field is $\bbQ$-a.s.\ globally bounded uniformly in $N$: 
\begin{equation}
    \sup_{N \in \bbN} \sup_{x\in\bbZ^d} \hat{\bbE}^{\omega}_N[\varphi_x^2] \stackrel{\eqref{eq:BrascampLieb}}{\leq} \sup_{x\in\bbZ^d} \bbE^\omega[\varphi_x^2] \leq \overline{g} \,,
\end{equation}
$\hat{\bbE}^{\omega}_N$ being the expectation associated with $\hat{\bbP}^{\omega}_N$.
\vspace{0.3\baselineskip}

\noindent \textbf{2.} \emph{Any limit point of coincides with $\bbP^\omega$}:

One first shows that every limit point $\hat{\bbP}^{\omega}$ of subsequences of $\{ \hat{\bbP}^{\omega}_N \}_{N \in \bbN}$ satisfies
\begin{equation}\label{eq:Variance_ScalProd_ConditionedCenteredGFF}
    \lim_{n \to \infty} \hat{\bbE}^{\omega} [ \langle f_n , \varphi \rangle^2    ] = 0
\end{equation}
for every sequence $\{f_n\}_{n \in \bbN} \subseteq \bbR^{\bbZ^d} $ such that $\lVert f_n \rVert_1 \leq 1$ and $\lim_{n \to \infty} \lVert f_n \rVert_\infty = 0$, where $\hat{\bbE}^{\omega}$ is the expectation associated with $\hat{\bbP}^{\omega}$. The proof of~\eqref{eq:Variance_ScalProd_ConditionedCenteredGFF}, which is analogous to the one of~\cite[Lemma 2.2]{deuschel1999entropic}, uses in order Brascamp-Lieb~\eqref{eq:BrascampLieb}, H\"older and Minkowski inequality to get that for every $p,q \geq 1$ such that $1/p+1/q=1$
\begin{equation}
    \begin{aligned}
    \sup_{N \in \bbN} \hat{\bbE}^{\omega}_N [ \langle f_n , \varphi \rangle^2 ] &\leq \sum_{x,y \in \bbZ^d} f_n(x) g^\omega(x,y) f_n(y) \\&\leq  \lVert f_n \rVert_q \sum_{x\in \bbZ^d} \lVert g^\omega(x,\cdot) \rVert_p |f_n(x)|  \leq c \lVert f_n \rVert_q  \Vert f_n \rVert_1 \leq  c \lVert f_n \rVert_q\,,
    \end{aligned}
\end{equation}
having used that by~\eqref{eq:QuenchedGFEstimate} one can choose $p$ large enough such that $\sup_{x\in \bbZ^d}\lVert g^\omega(x,\cdot) \rVert_p < c$. Then, by the fact that $\lim_{n \to \infty} \lVert f_n \rVert_\infty = 0$ one gets by interpolation that $\lim_{n \to \infty} \lVert f_n \rVert_q =0$, which yields~\eqref{eq:Variance_ScalProd_ConditionedCenteredGFF}.

To have that any limit point $\hat{\bbP}^{\omega}$ in the weak topology of subsequences of $\{\hat{\bbP}^{\omega}_N\}_{N \in \bbN}$ coincides with $\bbP^\omega$, one first has to check that $\hat{\bbP}^{\omega}$ satisfies the same equations as $\bbP^\omega$ in~\eqref{eq:DLR2_GFF}. To show it, note that for $\hat{\bbP}^{\omega}_N$ it holds that
\begin{equation}\label{DLR_ConditionedRecenteredGFF}
    \begin{aligned}
        \hat{\bbP}^{\omega}_N(\De \phi | \cF_{\{x\}^c})(\varphi) &\propto \exp \bigg\{ - \frac{\mu^\omega_x}{2} [(\phi_x - m^\omega_x(\varphi) - \bbE^{\omega,+}_N[\varphi_x - m^\omega_x(\varphi)]]^2 \bigg\} \\
        &\quad\cdot\IND_{\{\varphi_x \geq -\bbE^{\omega,+}_N[\varphi_x] \}} \De \phi_x \prod_{y \neq x}\delta_{\varphi_y}(\De \phi_y) \,, \qquad \hat{\bbP}^{\omega}_N(\De \varphi)\text{-a.s.}
    \end{aligned}
\end{equation}
By Theorem~\ref{thm:MainTheorem}, ii), we have that $\lim_{N \to \infty}\bbE^{\omega,+}_N[\varphi_x] = \infty$ for every $x \in \bbZ^d$, $\bbQ$-a.s., hence one just has to show that for every $x \in \bbZ^d$, $\bbQ$-a.s.
\begin{equation}\label{eq:CondExpLimitForDLR}
    \lim_{N\to \infty} \bbE^{\omega,+}_N[\varphi_x - m^\omega_x(\varphi)] = \lim_{N\to \infty} (-\cL^\omega \bbE^{\omega,+}_N[\varphi_\cdot](x)) = 0 \,,
\end{equation}
which follows from Lemma~\ref{lem:ExtensionControlLu}.

Hence any convergent subsequence of $\{\hat{\bbP}^{\omega}_N\}_{N\in\bbN}$ has a limit $\hat{\bbP}^{\omega}$ that satisfies the same equations as $\bbP^\omega$ in~\eqref{eq:DLR2_GFF}. It is known that for the Gaussian free field every extremal Gibbs state can be written as $\bbP^\omega T_h^{-1} =: \bbG_h$ for some $\omega$-harmonic $h \in \bbR^{\bbZ^d}$ (see \cite[Section 13.2]{georgii1988gibbs}). Then by~\cite[Theorem 7.26]{georgii1988gibbs} there exists a measure $\nu$ on the space $\cH$ of $\omega$-harmonic functions such that $\hat{\bbP}^\omega = \int_\cH \bbG_h \, \De\nu(h)$. Let $\bbE^{\bbG_h}$ and $\bbV\mathrm{ar}^{\bbG_h}$ denote the expectation and the variance, respectively, associated with $\bbG_h$.
Now for $x \in \bbZ^d$ and considering the sequence of functions $f_n^x(y)=\bbP^\omega_{x}[X_{T_{B(x,n)}}=y]$, $y\in \bbZ^d$, $n \in\bbN$, one has by the $\omega$-harmonicity of $h$ that 
\begin{equation}
    \bbE^{\bbG_h}[\langle f_n^x,\varphi \rangle] = \bbE^\omega[\langle f_n^x,h \rangle] = h(x) \,.
\end{equation}
Hence by Fatou's lemma for every $x \in \bbZ^d$
\begin{equation}
    \begin{aligned}
        0 \stackrel{\eqref{eq:Variance_ScalProd_ConditionedCenteredGFF}}{=}\lim_{n \to \infty} \hat{\bbE}^\omega[\langle f_n^x,\varphi \rangle^2] &= \lim_{n \to \infty} \int_\cH \bbE^{\bbG_h}[\langle f_n^x,\varphi \rangle^2] \, \De\nu(h) \\
        & \geq \int_\cH \Big[ \lim_{n\to\infty}\bbV\mathrm{ar}^{\bbG_h}[\langle f_n^x,\varphi \rangle] + h(x)^2 \Big] \, \De\nu(h) \\
        & = \int_\cH h(x)^2 \, \De\nu(h) \,,
    \end{aligned}
\end{equation}
where $\lim_{n\to\infty} \bbV\mathrm{ar}^{\bbG_h}[\langle f_n^x,\varphi \rangle] = 0$ can be proven by just using the definition of $\bbG_h$ and \eqref{eq:Variance_ScalProd_ConditionedCenteredGFF} again.
Then $\nu$ is concentrated on the constant function $h = 0$, and thus $\hat{\bbP}^\omega = \bbP^\omega$.
\vspace{0.3\baselineskip}

\noindent \textbf{3.} \emph{By the regularity of the field, the recentering is constant over all vertices}:

Note that the results we have obtained in Section~\ref{sec:ProofEntropicRepulsion} until this point hold for almost every $\omega \in \Omega_{\lambda,\Lambda}$. 
If we show that for every $x \in \bbZ^d$
\begin{equation}\label{eq:limit}
    \lim_{N \to \infty} \bbE^{\omega,+}_N[\varphi_x - \varphi_0] = 0\,,
\end{equation}
then one can get that on a subset of $\Omega_{\lambda,\Lambda}$ of full $\bbQ$-measure it is possible to substitute the recentering $\bbE^{\omega,+}_N[\varphi_\cdot]$ by the constant recentering $a_N = \bbE^{\omega,+}_N[\varphi_0]$, which is such that
\begin{equation}
    \lim_{N \to \infty} \frac{a_N}{\sqrt{4 \overline{g} \log N}} = \mathscr{h}^{\mathrm{hom}}_V (0)\,.
\end{equation}
One can achieve this by showing the convergence on local functions and then using that the error $\bbE^{\omega,+}_N[\varphi_x -\varphi_0]$ vanishes in the limit as $N\to \infty$ for any $x$ fixed. This proves Theorem~\ref{thm:MainTheorem}, iii).
We are left proving~\eqref{eq:limit}. In fact, we can even prove a stronger regularity as stated in the following lemma. We remark that the result below does not assume that the conductances are i.i.d., only that Lemma~\ref{lem:ExtensionControlLu} holds (which is true under more mild algebraic mixing conditions on $\bbQ$, see Remark~\ref{rem:Discussion}, 1) below).
\begin{lemma}\label{lem:RegularityCF} There exists $\gamma\in (0,1)$ such that for every $\delta \in (0,1)$ we can find a constant $c_{12}=c_{12}(\delta)>0$ such that for $\bbQ$-a.a.\ $\omega \in \Omega_{\lambda,\Lambda}$ there exists $N_0(\omega,\delta) \in \bbN$ such that for every $N \geq N_0(\omega,\delta)$,
    \begin{equation}
        \big|\bbE^{\omega,+}_N[\varphi_x - \varphi_y]\big| \leq  c_{12}\frac{|x-y|_\infty^{\gamma}}{N^{\gamma-\delta}}\,,\qquad \forall x,\,y\in\bbZ^d.
    \end{equation}
\end{lemma}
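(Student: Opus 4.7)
The plan is to exploit the near-harmonicity of $u^\omega_N(\cdot):=\bbE^{\omega,+}_N[\varphi_\cdot]$ guaranteed by Lemma~\ref{lem:ExtensionControlLu}, and combine it with the H\"older estimate~\eqref{eq:HoelderRegProperty} for genuinely $\omega$-harmonic functions. Concretely, I would decompose $u^\omega_N$ on a suitable ball into a harmonic part (to which~\eqref{eq:HoelderRegProperty} applies) and a Poisson-solution correction (controlled by the harmonicity defect bound from Lemma~\ref{lem:ExtensionControlLu}), then optimize the radius of the ball.

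Fix $\delta\in(0,1)$, $x,y\in\bbZ^d$ with $s:=|x-y|_\infty$. I would only treat the case $s\leq N$, since otherwise $s^\gamma/N^{\gamma-\delta}\geq N^\delta$ and the claim follows trivially from the global bound $|u^\omega_N|\leq c\sqrt{\log N}$ (Proposition~\ref{prop:UBCF}). For $r\geq s$ to be chosen, set $B=B(x,2r)$ and write
\begin{equation}
    u^\omega_N = h + v\qquad\text{on }B,
\end{equation}
where $v(z)=\sum_{w\in B} g^\omega_B(z,w)(-\cL^\omega u^\omega_N)(w)\,\mu^\omega_w$ solves the Poisson problem $-\cL^\omega v = -\cL^\omega u^\omega_N$ on $B$ with $v\equiv 0$ on $B^c$, and $h=u^\omega_N-v$ is $\omega$-harmonic on $B$. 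Using $\sum_{w} g^\omega_B(z,w)\mu^\omega_w = E^\omega_z[T_B]\leq c r^2$ (uniform ellipticity) together with Lemma~\ref{lem:ExtensionControlLu}, I obtain
\begin{equation}
    \sup_{z\in B}|v(z)|\leq c\, r^2\cdot\frac{c_{11}}{N^{2-\delta}}.
\end{equation}
Then Proposition~\ref{prop:UBCF} gives $\sup_{B}|h|\leq \sup_B|u^\omega_N|+\sup_B|v|\leq C\sqrt{\log N}+Cr^2N^{-(2-\delta)}$, and applying~\eqref{eq:HoelderRegProperty} to $h$ on $B(x,2r)$ at the points $x,y\in B(x,r)$ yields
\begin{equation}
    |u^\omega_N(x)-u^\omega_N(y)|\leq 2\sup_B|v|+c_5\Big(\frac{s}{r}\Big)^\tau\sup_B|h|\leq C\frac{r^2}{N^{2-\delta}}+C\Big(\frac{s}{r}\Big)^\tau\sqrt{\log N},
\end{equation}
where I have absorbed the secondary $(s/r)^\tau\cdot r^2/N^{2-\delta}$ contribution into the first term.

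It remains to balance the two terms by choosing $r\asymp (s^\tau N^{2-\delta})^{1/(\tau+2)}\vee s$; a direct computation then yields a bound of the form $C\,s^{2\tau/(\tau+2)}N^{-(2-\delta)\tau/(\tau+2)}\sqrt{\log N}$. Setting $\gamma=2\tau/(\tau+2)\in(0,1)$, the exponent of $N$ becomes $-\gamma+\gamma\delta/2$, so by renaming the parameter (and absorbing $\sqrt{\log N}$ into a further arbitrarily small power of $N$) the claim follows for every $\delta\in(0,1)$. I expect the main technical care to be on keeping track of the case distinction $r\geq s$ versus very small $s$ (where one should not shrink $r$ below $s$, hence the $\vee s$ in the choice of $r$), and verifying that the chosen $r$ stays within the regime where the harmonicity defect estimate of Lemma~\ref{lem:ExtensionControlLu} is the dominant error; since Lemma~\ref{lem:ExtensionControlLu} is valid on all of $\bbZ^d$, no separate treatment of $\partial_{\mathrm{in}}V_N$ is needed.
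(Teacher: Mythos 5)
Your proposal is correct and follows essentially the same route as the paper: the harmonicity defect bound of Lemma~\ref{lem:ExtensionControlLu} combined with the killed Green function estimate $\sup_B G^\omega_{B(x,2r)}\IND \leq cr^2$, the H\"older estimate~\eqref{eq:HoelderRegProperty} applied to the harmonic part, and the same optimization $r \asymp (s^\tau N^{2-\delta})^{1/(\tau+2)}$ yielding $\gamma = 2\tau/(\tau+2)$. Your explicit Poisson decomposition $u^\omega_N = h+v$ is just a repackaging of the paper's sandwich between the superharmonic $u^\omega_N$ and the subharmonic $u^\omega_N - c_{11}N^{-2+\delta}G^\omega_{B(x,2r)}\IND$, so the two arguments are equivalent.
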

\begin{proof} Fix $\delta\in (0,1)$. Clearly, if $x=y$ there is nothing to prove. Let $r>100$ and suppose first that $|x-y|_\infty < r$. Then, by arguing as in the proof of Proposition~\ref{prop:UBCF}, with the help of Lemma~\ref{lem:ExtensionControlLu}, the function $u^\omega_N = \bbE^{\omega, +}_N[\varphi_\cdot]$ is superharmonic while $u^\omega_N - c_{11} N^{-2+\delta} G^\omega_{B(x,2r)} \IND$ is subharmonic in $B(x,2r)$ for all $N$ large enough depending on $\omega$ and $\delta$. Furthermore, both functions are equal to $u_N^\omega$ on $\partial B(x,2r)$.
Therefore, we get 
\begin{equation}
    \begin{aligned}
        \big|u^\omega_N(x) - u^\omega_N(y)\big| &\leq  \Big |\sum_{z\in \partial B(x,2r)}  P^\omega_x[X_{T_{B(x,2r)}}=z] u_N^\omega(z) - \sum_{z\in \partial B(x,2r)}P^\omega_y[X_{T_{B(x,2r)}}=z] u^\omega_N(z) \Big |\\ &
        \quad + \frac{c_{11}}{N^{2-\delta}} \sup_{z\in B(x,r)} G^\omega_{B(x,2r)} \IND(z) \,.
    \end{aligned}
\end{equation}
On the one hand, we have the estimate $\sup_{B(x,r)} G^\omega_{B(x,2r)} \IND\leq cr^2$, because of~\eqref{eq:QuenchedGFEstimate}. On the other hand,
we note that the function $f_N^\omega(y) = \sum_{z\in \partial B(x,2r)} P^\omega_y[X_{T_{B(x,2r)}}=z] u_N^\omega(z)$ is harmonic in $B(x,2r)$ and also bounded by $c \sqrt{\log N}$ for all $N$ large enough by Proposition~\ref{prop:UBCF}. We can then use the H\"older regularity property~\eqref{eq:HoelderRegProperty} to estimate the first term in the right-hand side by
\begin{equation}
     c \Big(\frac{|x-y|_\infty}{r}\Big)^{\tau} \sqrt{\log N} \,.
\end{equation}
This leads to the estimate for all $N$ large enough depending on $\omega$ and $\delta$, and all $x,y \in \bbZ^d$ such that $|x-y|_\infty < r$
\begin{equation}\label{eq:estimate}
    \big|u^\omega_N(x) - u^\omega_N(y)\big| \leq c \frac{|x-y|_\infty^{\tau}}{r^{\tau}}\sqrt{\log N} + c \frac{ r^2}{N^{2-\delta}} \,.
\end{equation}
Note that~\eqref{eq:estimate} holds trivially also if $|x-y|_\infty \geq r$ by choosing a large numerical constant $c > 10 \,\overline{g}$ by Proposition~\ref{prop:UBCF}. Therefore, we can optimize the right-hand side of~\eqref{eq:estimate} by choosing $r = |x-y|_\infty^{\tau/(2+\tau)} N^{2/(2+\tau)}$. Letting $\gamma = 2\tau / (\tau + 2)$, this leads to the estimate, for all $N$ large enough depending on $\omega$ and $\delta$ and all $x,y \in \bbZ^d$
\begin{equation}
    \big|u^\omega_N(x) - u^\omega_N(y)\big| \leq c \frac{|x-y|^\gamma_\infty}{N^\gamma}\big( \sqrt{\log N} + N^{\delta}  \big) \,,
\end{equation}
which concludes the proof being the second factor bounded by $c\,N^{\delta}$ for all $N$ large enough.

\end{proof}

\begin{remark}\label{rem:RegularityCF}
    For i.i.d. conductances one can derive stronger controls on the regularity of the conditioned expectation $\bbE^{\omega,+}_N[\varphi_\cdot]$ analogously to the one in Proposition 3.1 of~\cite{deuschel1999entropic}. It is reasonable that this can be extended to the case of non i.i.d. conductances under sufficient mixing conditions on $\bbQ$.
    It is possible to show that for every $\delta \in (0,1)$ there exists a constant $c=c(\delta)>0$ such that for $\bbQ$-a.a.\ $\omega \in \Omega_{\lambda,\Lambda}$ there exists $N_0(\omega,\delta) \in \bbN$ such that for every $N \geq N_0(\omega,\delta)$
    \begin{equation}
        \big|\bbE^{\omega,+}_N[\varphi_x-\varphi_y] \big|\leq c \frac{|x-y|_\infty}{N^{1-\delta}} \,,\, \text{ for all } x,y \in \bbZ^d\,.
    \end{equation}
    The proof follows the strategy of the one of Proposition 3.1 in~\cite{deuschel1999entropic}, but we sketch it here for completeness.

    Setting 
    \begin{equation}
        A^\omega_N(x) = -\cL^\omega \bbE^{\omega,+}_N[\varphi_\cdot](x) \,, \qquad x \in \bbZ^d\,,
    \end{equation}
    we just have to prove that for $\bbQ$-a.a.\ $\omega \in \Omega_{\lambda,\Lambda}$
    \begin{equation}\label{eq:ClaimRegularityNablaLaplace}
        \sup_{x \in \bbZ^d} |(\nabla_i (-\cL^\omega)^{-1} A^\omega_N)(x)| \leq c \frac{1}{N^{1-\delta}} \,, \, \text{ for all } i \in \{1,\dots,d\} \,,
    \end{equation} 
    $\nabla_i$ being the discrete gradient in the $i$-th direction (i.e. $\nabla_i f(x) = f(x+e_i) - f(x)$ for $f \in \bbR^{\bbZ^d}$ and $e_i \in \bbZ^d$ whose $i$-th component is $1$ and all the others are $0$).
    Let $K^\omega_i(x,y) = g^\omega(x+e_i,y) - g^\omega(x,y)$ be the kernel of the operator $\nabla_i (-\cL^\omega)^{-1}$ on $\ell^2(\mu).$ Then the key point for the proof is to just have a control on such gradient of the Green function, namely to show that there exist two positive constants $s=s(\delta)$, $c_{13}=c_{13}(\delta)$ such that, setting $r=r(N)=c_{13}\,(2\log N)^{1/s}$, for $\bbQ$-a.a.\ $\omega \in \Omega_{\lambda,\Lambda}$ and for all $N$ large enough
    \begin{equation}\label{eq:ClaimKernel}
        \sup_{x \in \bbZ^d}\sup_{\substack{y \in V_{2N} \,:\,\\ |y-x| \geq r}} |K^\omega_i(x,y)| \leq \frac{c_{13}}{|x-y|^{d-1-\delta/2}}\,, \, \text{ for all } i \in \{1,\dots,d\} \,.
    \end{equation}
    Then, by Lemma~\ref{lem:ExtensionControlLu} and~\eqref{eq:ClaimKernel} it follows
    \begin{equation}
        \begin{aligned}
        \sup_{x\in\bbZ^d}|(&\nabla_i (-\cL^\omega)^{-1} A^\omega_N)(x)|  \leq \sum_{y \in V_N} |K^\omega_i(x,y)| |A^\omega_N(y)| \mu^\omega_y\\
        &\leq \frac{c_{11}}{N^{2-\delta'}} \Big\{ \sum_{y \in V_N \cap B(x,r) }[|g^\omega(x+e_i,y)|+|g^\omega(x,y)|]\mu^\omega_y  + \sum_{y \in V_N\setminus B(x,r)} \frac{c_{13}}{|x-y|^{d-1-\delta/2}} \Big\}\,,
        \end{aligned}
    \end{equation}
    having used in the first step that $A^\omega_N(y) = 0$ for $y \in V_N^c$ by Lemma~\ref{lem:SuperharmonicConditionedExpectation}.
    The first sum inside the curly bracket is less or equal to $c\bar{g}r^d$, $\bbQ$-a.s., while we can estimate the second one by $c\, N^{1+\delta/2}$, so that taking $\delta' < \delta/2$ we get
    \begin{equation}
        |(\nabla_i (-\cL^\omega)^{-1} A^\omega_N)(x)| \leq c\,N^{-2+\delta'}[r^d + N^{1+\delta/2}] \leq C N^{\delta-1}
    \end{equation}
    which is~\eqref{eq:ClaimRegularityNablaLaplace}.
    We now prove~\eqref{eq:ClaimKernel}.
    By Theorem 1.2 of~\cite{Dario2019QuantitativeHO}, we can find two constants $s,c_{13}>0$ such that for every $y \in \bbZ^d$ there exists a random variable $\cM_{\mathrm{ell},\delta/2}(y)$  such that $\bbQ[\cM_{\mathrm{ell},\delta/2}(y) > r] \leq c_{13}\exp(-r^s/c_{13})$, and for every $x\in \bbZ^d$ such that $|y-x|\geq \cM_{\mathrm{ell},\delta/2}(y)$ it holds that
    \begin{equation}\label{eq:homogenizationGreenFunction}
        |g^\omega(x,y)-\mathscr{g}(x-y)|\leq \frac{c_{13}}{|x-y|^{d-1-\delta/2}} \,,
    \end{equation}
    where $\mathscr{g}(\cdot)$ is the Green function for the Brownian motion, with deterministic covariance matrix, the scaled random walk $(N^{-1} X_{tN^2})_{t \geq 0}$ converges to. By setting $\cM = \max_{y\in V_{2N}} \cM_{\mathrm{ell},\delta/2}(y)$, it follows by a union bound that $\bbQ[\cM > r] \leq C (2N)^d \exp(-r^s/c_{13})$, hence by Borel-Cantelli lemma and the definition of $r$, for $\bbQ$-a.a.\ $\omega$ there exists $N_0(\omega,\delta)$ such that for every $N \geq N_0(\omega,\delta)$ we have~\eqref{eq:homogenizationGreenFunction} for all $y \in V_{2N}$ and $x \in \bbZ^d$ such that $|x-y| \geq r$. We can now conclude~\eqref{eq:ClaimKernel} by replacing in the definition of $K^\omega_i(x,y)$ the Green function $g^\omega(\cdot,\cdot)$ by $\mathscr{g}(\cdot-\cdot)$ plus the error term in~\eqref{eq:homogenizationGreenFunction}, and by using that $\nabla_i \mathscr{g}(x-y)$ is bounded by $c/|x-y|^{d-1}$ for some constant $c>0$.
\end{remark}

\begin{remark}\label{rem:corollaryglobal}
    With the same techniques as in~\cite[Corollary 3.2]{deuschel1999entropic}, in the case of i.i.d.\ conductances, one can use the regularity result in Remark \ref{rem:RegularityCF} to obtain strong concentration properties for the conditioned field $\bbP^{\omega,+}_N$.
    Formally,  by Remark~\ref{rem:RegularityCF}, the Brascamp-Lieb inequality~\eqref{eq:BrascampLieb} with $u(r)=\exp(r)$, and Chebyshev's inequality, one can get that for every $z\in \bbR^d$, $\beta < 1/\overline{g}$ and $\delta >0$ there exists $N_0=N_0(\omega,\delta) \in \bbN$ such that
    \begin{equation}
        \sup_{N \geq N_0}\sup_{x \,:\,|x-Nz| \leq N^{1-\delta}} \bbE^{\omega,+}_N \Big\{\exp\Big[\frac{\beta}{2}(\varphi_x-\bbE^{\omega,+}_N[\varphi_{\lfloor Nz \rfloor}])^2\Big]\Big\} < \infty \,.
    \end{equation}   
    It is also possible to obtain a similar result employing Lemma~\ref{lem:RegularityCF} instead of the stronger one in Remark~\ref{rem:RegularityCF}, but with a weaker control on the concentration scale.
\end{remark}

\begin{remark} \label{rem:Discussion}
\begin{enumerate} 
    \item  The assumption of independence of conductances can be relaxed. The only place where we use it is in the proof of Proposition~\ref{prop:PositivePortionOfHighVariance} to claim that $g^\omega_{B(y,R)}(y,y)$, $y \in B_{z,x}$, are mutually independent random variables for $x \in \square_\ell$ and $\ell=4R$. However, Proposition~\ref{prop:PositivePortionOfHighVariance} still holds assuming an $\alpha$-mixing condition for the conductances, with algebraic decorrelation rate given by $\Phi(x)=x^{-\alpha}$, for any $\alpha > d(d-1)/2$, namely assuming that
    \begin{equation}
        \begin{minipage}{0.6\linewidth}
                \label{eq:AlphaMixingCondition}
                $| \bbQ[ A \cap B] - \bbQ[A]\bbQ[B] | \leq \Phi(d_\infty(U,V)),$ \\
                for all Borel sets $U, V \subseteq \bbZ^d$ and every $A \in \cQ_U, B \in \cQ_V \,,$
        \end{minipage}
    \end{equation}
    where $\cQ_K = \sigma (\omega_{x,y} \, : \, x,y \in K \cup \partial K)$, $K \subseteq \bbZ^d$ (see Appendix~\ref{secapp:ProofPositivePortionOfHighVariance} for the proof). We do not claim optimality for this set of values for $\alpha$ and we do not pursue here a sharp control on the mixing properties.
    However, it is reasonable to expect, at least based on our method of proof, that Theorem~\ref{thm:MainTheorem} needs some decorrelation assumption on the conductances to quantify ergodicity of the environment. In fact, the repulsion is due to the fact that the maximum of the local fields $\psi_z^\omega$, $z\in \cC_N$ at scale $N^{2/d}$ is concentrated around $\sqrt{4 \overline{g} \log N}$, for a subcollection of points $\cS\subseteq \cC_N$ that need to be uniformly spread in $V_N$, (see \eqref{eq:CoveringProperty}).

    \item For general mixing environments, the constant $\overline{g}$ is not easily computable. However, when the conductances $\omega_{x,y}$ are i.i.d. and such that the essential infimum of their support is $\lambda$, then $\overline{g}$ can be evaluated and it holds that
    \begin{equation}\label{eq:ValueOverlineg}
        \overline{g} = \frac{g(0,0)}{\lambda}\,,
    \end{equation}
    where $g(\cdot,\cdot)$ is the Green function of the simple random walk on $\bbZ^d$, that is, the one defined in \eqref{eq:GreenFunction} with conductances all equal to one.  Interestingly, only the lower bound on the conductances enters in the value of $\overline{g}$. Let us quickly sketch~\eqref{eq:ValueOverlineg}. For given $N>0$ and $\delta>0$ there is a $\bbQ$-positive probability that all the conductances in $B(0,N)$ belong to $[\lambda,\lambda+\delta]$. As such, with positive $\bbQ$-probability,
    \begin{equation}\label{eq:bargiid}
        g^\omega(0,0) \geq g^{\omega}_{B(0,N)}(0,0) \geq \frac{g_{B(0,N)}(0,0)}{(\lambda+\delta)} \geq \frac{g(0,0)}{(\lambda+\delta)} - \frac{c}{N^{d-2}}\,,
    \end{equation}
    where we used Rayleigh's monotonicity principle in the second inequality and \eqref{eq:QuenchedGFEstimate} in the last one. By definition of essential supremum, it follows that $\overline{g} \geq g(0,0)/(\lambda+\delta) - c/N^{d-2}$ for every $\delta>0$ and all $N>0$, which yields $\overline{g} \geq g(0,0)/\lambda$. The reverse inequality follows directly from Rayleigh's monotonicity principle.

    Another interesting consequence of~\eqref{eq:ValueOverlineg} is that it is tempting to say that Theorem~\ref{thm:MainTheorem} might hold for possibly unbounded conductances, provided that they are i.i.d., bounded from below by a positive constant $\lambda$, and possibly satisfy some moment conditions. Together with the homogenization results in~\cite{barlow2010invariance}, we might expect that the proof of Theorem~\ref{thm:MainTheorem} could be adapted to this case. This will be the subject of future work.

    Strikingly, in view of~\eqref{eq:bargiid} if $\lambda = 0$, even keeping $\bbQ[\omega_{x,y} >0]=1$, $x\sim y$, then, despite the fact that a quenched invariance principle might still hold with a non-degenerate covariance matrix under mild moment conditions on the conductances (see for instance~\cite{andres2013invariance, andres2015invariance, andres2018quenched, barlow2010invariance, bella2020quenched, biskup2018limit}), the maximum of the field and the hard wall repulsion are expected to have a markedly different behavior. We discuss this in the next point for the manifestly non-elliptic case of a Gaussian free field on the supercritical percolation cluster.

    \item Recently, there has been a growing interest for the study of the Gaussian free field on the supercritical percolation cluster. For example, in~\cite{andres2025scaling} the authors prove a scaling limit for the field on the infinite cluster of supercritical Bernoulli bond percolation on $\bbZ^d$, $d\geq 2$, and~\cite{schweiger2024} look at the maximum of the field in dimension $d=2$. In the latter, the authors show that if $p$ is sufficiently close to one, then the maximum of the field with zero boundary conditions, properly centered and rescaled, converges to a randomly shifted Gumbel distribution for almost all realizations of the cluster. More precisely, if $V_N = [0,N-1]^2 \cap \bbZ^2$, $\cC_\infty$ is the infinite cluster of supercritical Bernoulli bond percolation on $\bbZ^2$ with parameter $p$ and law $\bbP_p$, and $\varphi^{\cC_\infty, N}_\cdot$ is the Gaussian free field with zero boundary conditions on $V_N\cap \cC_\infty$, then for $p$ sufficiently close to one there exists a constant $a_p$ such that for $\bbP_p$-a.e.\ realization of the cluster, the sequence of random variables
    \begin{equation}
       \max_{x\in V_N \cap \cC_\infty} \varphi^{\cC_\infty,N}_x - \sqrt{\frac{2}{\pi a_p}}\Big(\log N - \frac{3}{8}\log\log N\Big)
    \end{equation}
    converges in distribution to a deterministic limit, given by a randomly shifted Gumbel distribution. Notably, the constant $a_p$ is the effective diffusivity of the Brownian motion to which the random walk on the infinite cluster converges.
    
    When $d\geq3$, the situation is quite different to dimension $d=2$ but also to the case with uniformly elliptic conductances. On the one hand, the correlations still decay algebraically at mesoscopic scales, like in the uniformly elliptic case and one can define an infinite volume field. On the other hand the variances of the field are not uniformly bounded from above. It follows from~\cite{abe2015effective} that for $d\geq 3$ there exist constants $c_1,c_2>0$ such that $\bbP_p$-a.s.\ for all $N$ large enough
   \begin{equation}
        c_1 \log N \leq \sup_{x\in V_N\cap \cC_\infty} g^{\cC_\infty}(x,x) \leq c_2 \log N\,,
    \end{equation}
    which leads to the conclusion that the maximum of the field in $V_N \cap \cC_\infty$ is of order $\log N$ rather than of order $\sqrt{\log N}$ (see Corollary 1.7 in~\cite{abe2015effective} for a slightly different model). This is due to the substantial presence of open tubes of length $\log N$ in $V_N\cap \cC_\infty$. In light of this, it is natural to expect that the entropic repulsion for the field on the infinite cluster is of order $\log N$ rather than $\sqrt{\log N}$. More precisely, if we look at the hard wall event $\cW^+_{\cC_\infty,N} = \{\varphi_x\geq 0,\, \forall x\in V_N\cap \cC_\infty\}$ when $d\geq 3$, we expect that for some constants $\hat{c},\overline{c}>0$ the following asymptotic behavior hold for any $p>p_c$ and $\bbP_p$-a.e.\ realization of the cluster 
    \begin{equation}\label{eq:conjecture}
        e^{-\hat{c} N^{d-2} (\log N)^2 (1 + o(1))} \leq  \bbP^{\cC_\infty}[\cW^+_{\cC_\infty,N}] \leq e^{-\overline{c} N^{d-2} (\log N)^2(1+o(1))}\,.
    \end{equation}
    Proving that $\hat{c} = \overline{c}$, and from there getting a pathwise behavior for the conditioned field appears to be a more challenging task and it is the subject of ongoing investigation. Our method of proof, together with the results in~\cite{chiarini2025solidification} and the quantitative homogenization estimates in~\cite{Dario2019QuantitativeHO}, seems a good starting point to address the question. However, it is possible that sharp controls on the tails of $g^{\cC_\infty}(0,0)$ under $\bbP_p[\,\cdot\,|0\in \cC_\infty]$ are needed, which might be difficult to get in the whole supercritical regime $p\in(p_c,1]$. Some of this estimates are available for $d=2$ in \cite{schweiger2024}.

    \item The article~\cite{schweiger2024} also investigates the maximum of the Gaussian free field in dimension $d=2$ when the conductances are i.i.d.\ and uniformly bounded away from zero and infinity. The authors prove that the maximum of the field with zero boundary conditions, properly centered and rescaled, converges to a randomly shifted Gumbel distribution. To be more precise, if $V_N = [0,N-1]^2 \cap \bbZ^2$, and $\varphi^N_\cdot$ is the Gaussian free field with random conductances and with zero boundary conditions on $V_N$, then the sequence of random variables
  \begin{equation}
       \max_{x\in V_N } \varphi^N_x - \sqrt{\frac{2}{\pi \overline{a}}}\Big(\log N - \frac{3}{8}\log\log N\Big)
    \end{equation}
    converges in distribution to a deterministic limit, given by a randomly shifted Gumbel distribution. Also here the constant $\overline{a}$ is the effective diffusivity of the Brownian motion to which the random walk with i.i.d.\ uniformly elliptic conductances converges. In other words, the relevant proportionality constant is captured by the homogenized behavior of the random walk. This is in stark contrast with the first order behavior of the maximum in dimension $d\geq 3$ (see Remark~\ref{rem:MaxGFFErgodic}), where the relevant constant $\overline{g}$ is not captured by the homogenized behavior of the random walk. In particular, similarly to~\cite[Remark 3.7]{chiarini2021disconnection} and unlike what happens in $d=2$, two environments with the same effective diffusivity can have different values of $\overline{g}$ and therefore different first order behavior for the maximum of the field and of the entropic repulsion effect.
    For example, consider the special case in which the conductances $\omega\in \Omega_{\lambda,\Lambda}$ are i.i.d.\ under the environment measure $\bbQ$ and such that the infimum of the support of $\omega_{0,x}$, $x\sim 0$, is $\lambda$. It is well-known that the constant-speed random walk on $(\bbZ^d,\bbE_d ,\omega)$ with diffusive scaling has a $\bbQ$-a.s.\ scaling limit given by a Brownian motion with covariance matrix $a^{\rm hom} = \sigma^2 \mathrm{Id}$, where $\mathrm{Id}$ is the $(d \times d)$-identity matrix and $\sigma^2 > 0$. By considering constant conductances $\omega(\sigma) \equiv \frac{1}{2} \sigma^2$ on $(\bbZ^d,\bbE_d)$, it is straightforward to show that the corresponding random walk on the weighted graph $(\bbZ^d,\bbE_d,\omega(\sigma))$ with diffusive scaling has the same scaling limit. However, unless the random conductances under $\bbQ$ are deterministic, one has, (see  for instance~\cite[Section 4]{biskup2011recent})
    \begin{equation}
        \lambda < \frac{1}{\bbE[\omega_{0,x}^{-1}]}\leq \frac{\sigma^2}{2} \leq \bbE[\omega_{0,x}] < \Lambda\,,
    \end{equation}
    where $x\sim 0$.
    Thus, we get that $\overline{g}(\sigma) = 2g(0,0)/\sigma^2$ for the constant conductances, while in view of~\eqref{eq:ValueOverlineg}, for the random conductances $\overline{g} = g(0,0)/\lambda$  which is strictly larger than $2g(0,0)/\sigma^2$. Phenomenologically, this shows that while the scaling limit of the Gaussian free field is the same for the two environments (cf.\ \cite{andres2025scaling,chiariniRuszel2024stochastic}), the bond disorder leads to a larger value for the maximum and to a stronger entropic repulsion effect.
    \end{enumerate}
\end{remark}

\appendix
\section{Proof of Proposition~\ref{prop:PositivePortionOfHighVariance} with \texorpdfstring{$\alpha$}{}-mixing assumptions on conductances}
\label{secapp:ProofPositivePortionOfHighVariance}
\begin{prop}
    \label{prop:AlphaMixPositiveOfPortionHighVariance}
    Assume that $\bbQ$ satisfies the $\alpha$-mixing condition~\eqref{eq:AlphaMixingCondition} with $\alpha > d(d-1)/2$. Then, for every $\varepsilon >0$ there exists $q>0$ that for $\bbQ$-a.e.\ $\omega \in \Omega_{\lambda,\Lambda}$
    \begin{equation}
    \label{eq:AlphaMixPositivePortionOfHighVariance}
        \lim_{N \to \infty} \inf_{z \in \cC_N}\frac{1}{|B_z|}\sum_{y \in B_z} \IND_{\{ g^\omega_{U_z}(y,y) \geq \overline{g} - \varepsilon \}} \geq q \,.
    \end{equation}
\end{prop}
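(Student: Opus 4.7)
The plan is to follow the proof of Proposition~\ref{prop:PositivePortionOfHighVariance} verbatim up to the point where independence of the indicator variables is invoked, and to replace Chernoff's Bernoulli concentration with a polynomial concentration estimate derived from a $2p$-th moment bound for $\alpha$-mixing random fields on $\bbZ^d$.

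Concretely, I would fix $R=R(\varepsilon)$ via the quenched Green function bounds~\eqref{eq:QuenchedGFEstimate} so that $g^\omega(y,y) \leq g^\omega_{B(y,R)}(y,y) + \varepsilon/2$ uniformly in $\omega \in \Omega_{\lambda,\Lambda}$, introduce the indicators $Y_y^\omega = \IND_{\{g^\omega_{B(y,R)}(y,y) \geq \overline{g}-\varepsilon\}}$ (bounded in $[0,1]$, $\cQ_{B(y,R)}$-measurable, and satisfying $\bbQ[Y_y^\omega = 1] \geq 2q$ by stationarity and the definition of $\overline{g}$), and define $J^\omega_N(z)$ exactly as in~\eqref{eq:DefinitionJz}. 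Writing $B_z = \bigcup_{x \in \square_\ell} B_{z,x}$ with $\ell = 4R$ as in the i.i.d.\ proof, a union bound over the cosets reduces the question to the lower-tail estimate
\[
\bbQ\bigg[\sum_{y \in B_{z,x}} \big(Y_y^\omega - \bbQ[Y_y^\omega = 1]\big) \leq -q\,|B_{z,x}|\bigg]
\]
for a sum of bounded centered random variables indexed by a sublattice of $B_{z,x}$ with mutual $\ell^\infty$-spacing at least $\ell$.

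For $y, y' \in B_{z,x}$, the $\alpha$-mixing condition~\eqref{eq:AlphaMixingCondition} yields the covariance bound $|\mathrm{Cov}_\bbQ(Y_y^\omega, Y_{y'}^\omega)| \leq \Phi(d_\infty(y,y')-2R)$, together with the analogous bounds $|\kappa_k(Y^\omega_{y_1},\ldots,Y^\omega_{y_k})| \leq C_k \Phi(\text{maximal gap in a linear ordering})$ on higher-order joint cumulants. One then invokes a Rosenthal-type moment inequality for $\alpha$-mixing random fields: there is an integer $p > (d-1)/2$ such that, under $\alpha > d(d-1)/2$,
\[
\int \bigg|\sum_{y \in B_{z,x}} \big(Y_y^\omega - \bbQ[Y_y^\omega = 1]\big)\bigg|^{2p} \De\bbQ(\omega) \leq C_p\, |B_{z,x}|^p\,.
\]
This follows from the moment--cumulant expansion: centering forces every block of the contributing partition of $\{1,\ldots, 2p\}$ to have size at least two, so the dominant contribution comes from pair partitions and is bounded by $(\mathrm{Var}_\bbQ S)^p \leq C |B_{z,x}|^p$, the variance of $S=\sum_{y \in B_{z,x}} Y_y^\omega$ being of order $|B_{z,x}|$ because $\sum_r r^{d-1}\Phi(r) < \infty$, while the higher-cumulant corrections are controlled through the polynomial decay $\Phi(r) = r^{-\alpha}$. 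Markov's inequality then yields $\bbQ[J^\omega_N(z)] \leq C_p\, \ell^d\, |B_{z,x}|^{-p}$, and using $|B_{z,x}|\sim (L/\ell)^d$ and $L^d \sim N^2/\sqrt{\log N}$,
\[
\sum_{N\in \bbN} \bbQ\bigg[\bigcup_{z \in \cC_N} J^\omega_N(z)\bigg] \lesssim \sum_{N \in \bbN} |\cC_N|\, \ell^d\, |B_{z,x}|^{-p} \lesssim \sum_{N \in \bbN} N^{d-2-2p}\,\sqrt{\log N} < \infty
\]
whenever $2p > d-1$. Borel--Cantelli then concludes the proof exactly as in Proposition~\ref{prop:PositivePortionOfHighVariance}.

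The hard part will be the moment inequality: one needs to carry out the cumulant expansion while carefully summing $|\kappa_k(Y^\omega_{y_1},\ldots, Y^\omega_{y_k})| \leq C_k \Phi(\text{maximal gap})$ over all $2p$-tuples in the sublattice, and verify that the threshold $\alpha > d(d-1)/2$ is exactly what is needed to balance the cardinality factor $|\cC_N| \sim N^{d-2}\sqrt{\log N}$ against the polynomial decay $|B_{z,x}|^{-p}$ for $p$ chosen slightly above $(d-1)/2$. Once this bound is secured, all remaining steps---Green function localization, sublattice decomposition, and the final application of Borel--Cantelli---are direct transcriptions of the i.i.d.\ argument.
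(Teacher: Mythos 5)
There is a genuine gap, and it sits exactly where you flag the ``hard part'': the Rosenthal-type bound $\int |\sum_{y\in B_{z,x}}(Y^\omega_y-\bbQ[Y^\omega_y=1])|^{2p}\,\De\bbQ \leq C_p |B_{z,x}|^p$ is asserted, not proven, and with your choice of a \emph{constant} sublattice spacing $\ell=4R$ it is in fact false in the regime $\alpha>d(d-1)/2$ for $d\geq 4$. In the moment--cumulant expansion the mixing corrections do not sit below the pair-partition term: the single-block (order-$2p$ cumulant) contribution is of size roughly $|B_{z,x}|\sum_{r\leq L} r^{d(2p-1)-1}\Phi(r)\sim |B_{z,x}|\,L^{d(2p-1)-\alpha}$, which for $p$ just above $(d-1)/2$ and $\alpha$ just above $d(d-1)/2$ dominates $|B_{z,x}|^p$ (e.g.\ $d=4$, $p=2$: $L^{10}$ versus $L^{8}$). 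Because your final Borel--Cantelli computation uses only the clean $|B_{z,x}|^{-p}$ tail, the mixing exponent $\alpha$ never actually enters your estimate (beyond $\alpha>d$ for the variance), and your remark that $\alpha>d(d-1)/2$ is what ``balances $|\cC_N|$ against $|B_{z,x}|^{-p}$'' is not the right mechanism: that balance only needs $2p>d-1$ and is independent of $\alpha$. One can plausibly salvage your route by keeping the cumulant correction terms (after dividing by $(|B_{z,x}|)^{2p}$ they contribute roughly $L^{-\alpha}\sim N^{-2\alpha/d}$, whose summability against $|\cC_N|\sim N^{d-2}\sqrt{\log N}$ is again $\alpha>d(d-1)/2$), but this requires a careful multi-dimensional cumulant bound and tuple counting that your sketch does not supply.

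For comparison, the paper avoids any Rosenthal/cumulant machinery by making the sublattice spacing grow: it takes $4R<\ell=L^{1-\delta}$, so points of $B_{z,x}$ are at mutual distance $\ell$, and a simple iterated application of the covariance bound $|\bbE[Y_1Y_2]-\bbE[Y_1]\bbE[Y_2]|\leq 4\Phi(\ell/2)$ gives the $2k$-th moment estimate $c_k\big(|B_{z,x}|^k+|B_{z,x}|^{2k}\Phi(\ell/2)\big)$, with the mixing error kept explicitly. After Markov and the union bound this produces the two terms $N^dL^{-d(1+k\delta)}$ and $N^dL^{-d-(1-\delta)\alpha}$; the first is made summable by taking $k$ large, and it is precisely the second, i.e.\ the $\Phi(\ell/2)$ error with $\ell$ polynomially large in $N$, whose summability is equivalent to $(1-\delta)\alpha>d(d-1)/2$. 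If you want to keep your constant-spacing setup, you must either prove the corrected moment bound including these error terms or import a random-field Rosenthal inequality whose hypotheses you verify under $\Phi(r)=r^{-\alpha}$ with $\alpha>d(d-1)/2$; as written, the key inequality is both unjustified and too strong.
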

\begin{proof}
    Fix $\varepsilon >0$. As for Proposition~\ref{prop:PositivePortionOfHighVariance}, we fix $R=R(\varepsilon)$ such that $g^\omega_{B(y,R)}(y,y) \geq g^\omega(y,y) - \varepsilon/2$.
        For $y \in \bbZ^d$ let
    \begin{equation}
        \label{eq:DefqQ}
            q := \bbQ[g^\omega(y,y) \geq \overline{g} - \varepsilon/2 ]/2  \,, \qquad  Q := \bbQ[g^\omega_{B(y,R)}(y,y) \geq \overline{g}-\varepsilon] \,,
    \end{equation}
    which are independent of $y$ by the stationarity of $\bbQ$ and satisfy $0 < 2q \leq Q$.
    
    For any $4R < \ell  \leq L$ denote the box of size $\ell$ by $\square_\ell = [0,\ell)^d \cap \bbZ^d$, and for $z \in \cC_N$ we consider the grid $B_{z,x} = x+(\ell\bbZ^d \cap B_z)$. For $y \in \bbZ^d$ we define the centered variable $X_y = \IND_{\{ g^\omega_{B(y,R)}(y,y) \geq \overline{g} -\varepsilon \}} - Q$.  
    Then 
    \begin{equation}
      \sum_{y \in B_z} X_y = \sum_{x \in \square_\ell} \sum_{y \in B_{z,x}} X_y \,.
    \end{equation}
    As for Proposition~\ref{prop:PositivePortionOfHighVariance}, for $z \in \bbZ^d$ we define the events
    \begin{equation}
       J^\omega_N(z) = \Big\{ \frac{1}{|B_z|} \sum_{y \in B_z} \IND_{ \{g^\omega_{B(y,R)}(y,y) \geq \overline{g}-\varepsilon \} } < q  \Big\} \,,
    \end{equation}
    whose $\bbQ$-probability is independent of $z$. We then have that
    \begin{equation}
        \label{eq:EstimateQJWithX}
        \bbQ[J^\omega_N(z)] \leq \bbQ \Big[\sum_{y \in B_z}X_y < - \tfrac{Q}{2} |B_z| \Big] \,.
    \end{equation}
    Let $k \in \bbN$. Denoting by $\bbE$ the expectation associated with $\bbQ$, by Markov's and Jensen's inequalities
    \begin{equation}
        \label{eq:EstimateWithMoment}
        \bbQ \Big[\sum_{y \in B_z}X_y < -\tfrac{Q}{2} |B_z| \Big] \leq \frac{1}{{(QL^d/2)^{2k}}} \, \ell^{d(2k-1)} \sum_{x \in \square_\ell} \bbE\Big[ \Big(\sum_{y \in B_{z,x}}X_y \Big)^{2k} \Big] \,.
    \end{equation}
    We now want to estimate
    \begin{equation}
        \bbE\Big[ \Big(\sum_{y \in B_{z,x}}X_y \Big)^{2k} \Big] = \sum_{y_1, \dots, y_{2k} \in B_{z,x}} \bbE \Big[ \prod_{i=1}^{2k} X_{y_i} \Big] \,.
    \end{equation}
    We follow an analogous procedure to Section 3.2.4 of~\cite{Armstrong2022-ak}. For $z \in \cC_N$ and $x \in \square_\ell$ we define the set
    \begin{equation}
        H^{z,x}_j = \{ (y_1, \dots, y_{2k}) \in  B_{z,x}^{2k} \,:\, |\{ y_1, \dots, y_{2k} \}| = j \}
    \end{equation}
    composed of $2k$-tuples of points with exactly $j$ different entries, $j \in \bbN$. The set of $j$ distinct entries of $y \in H^{z,x}_j$ is denoted by $y_{\mathrm{dist}}$, and the multiplicity of $v \in y_{\mathrm{dist}}$ in $y$ is indicated by $q_y(v)$.
    The $\alpha$-mixing condition~\eqref{eq:AlphaMixingCondition} yields that for every $y =( y_1,\dots,y_{2k}) \in H^{x,z}_j$
    \begin{equation}
        \bbE \Big[ \prod_{i=1}^{2k} X_{y_i} \Big] = \bbE \Big[ \prod_{v \in y_{\mathrm{dist}}} X_v^{q_y(v)} \Big] \leq \prod_{v \in y_{\mathrm{dist}}} \bbE \Big[ X_v^{q_y(v)} \Big] +4(j-1) \Phi(\ell/2) \,,
    \end{equation}
    where the last step can be proven using induction on the covariance estimate $\bbE[Y_1 Y_2] - \bbE[Y_1]\bbE[Y_2] \leq 4 \Phi(\ell)$, which holds for every pair of $[-1,1]$-valued random variables $Y_1,Y_2$ that are measurable with respect to $\cQ_U$ and $\cQ_V$ respectively, with $d_\infty(U,V) \geq \ell$ (see Lemma A.1 of~\cite{Armstrong2016-vs}). In fact $X_y$ is $\cQ_{B(y,R)}$-measurable, and for $v,w \in B_{z,x}$ with $v \neq w$ we have $d_\infty(B(v,R),B(w,R)) \geq \ell - 2R \geq \ell/2$. 
    Then by the arguments on page 90 of~\cite{Armstrong2022-ak} we get
    \begin{equation}
        \bbE\Big[ \Big(\sum_{y \in B_{z,x}}X_y \Big)^{2k} \Big] \leq c_k \big(|B_{z,x}|^k + |B_{z,x}|^{2k} \Phi(\ell/2) \big) \,.
    \end{equation}
    Since for every $z \in \cC_N$ and $x \in \square_\ell$ we have $c (L/\ell)^d \leq |B_{z,x}| \leq c' (L/\ell)^d$, setting $n=L/\ell$ we get by~\eqref{eq:EstimateWithMoment} that
    \begin{equation}
        \bbQ \Big[\sum_{y \in B_z}X_y \leq - \tfrac{Q}{2} |B_z| \Big] \leq c'_{k}\big( n^{-dk} + \Phi(\ell/2) \big)\,.
    \end{equation}
    Then~\eqref{eq:EstimateQJWithX} yields
    \begin{equation}
        \sum_{N \in \bbN} \bbQ \Big[ \bigcup_{z \in \cC_N} J^\omega_N(z) \Big] \leq c'_{k} \sum_{N \in \bbN} |\cC_N|\big(n^{-dk} + \Phi(\ell/2)\big) \,.
    \end{equation}
    Taking $\ell= L^{1-\delta}$ for a $\delta \in (0,1)$, by~\eqref{eq:CardinalityCN} the above quantity is smaller or equal to
    \begin{equation}
        c''_{k}\sum_{N \in \bbN} N^d L^{-d(1 + k \delta)}+ c''_{k}\sum_{N \in \bbN} N^d L^{-d - (1-\delta)\alpha}
    \end{equation}
    and by~\eqref{eq:DefinitionL}, since $\alpha > d(d-1)/2$, the second term converges by taking $\delta$ small enough, while the first one converges by fixing $k$ (depending on $\delta$) large enough.
    Finally, by Borel-Cantelli lemma we get the claim~\eqref{eq:AlphaMixPositivePortionOfHighVariance}.
\end{proof}
\section{Proofs on the conditional expectation of the field}\label{secapp:ProofEstimatesExpectationCF}
In this section we give a proof of Lemma~\ref{lem:SuperharmonicConditionedExpectation} and of the estimate in~\eqref{eq:EstimateExpExpectationCF}, the latter being in Lemma~\ref{lem:EstimateExpExpectationCF} below. These proofs are an adaptation of the ones of Lemma 2.3 and Lemma 3.4 in~\cite{deuschel1999entropic}, respectively, to our inhomogeneous setting.
\begin{proof}[Proof of Lemma~\ref{lem:SuperharmonicConditionedExpectation}]
    For $\omega \in \Omega_{\lambda,\Lambda}$, $x \in \bbZ^d$ and $N \in \bbN$ we have
    \begin{equation}
        \begin{aligned}
            \bbE^{\omega,+}_N[ \varphi_x - m^\omega_x(\varphi) ] &= \bbE^{\omega,+}_N \Big[ \bbE^{\omega,+}_N[ \varphi_x - m^\omega_x(\varphi) | \cF_{\{x\}^c}] \Big] \\
            &= 
            \begin{cases}
                0 \,, & \mbox{for } x \in V_N^c \,, \\
                \bbE^{\omega,+}_N \Big[ \bbE^\omega [ (\varphi_x - m^\omega_x(\varphi))\IND_{\cW^+_N} | \cF_{\{x\}^c} ] / \bbP^\omega [\cW^+_N | \cF_{\{x\}^c}] \Big] \,, & \mbox{for }x \in V_N \,,
            \end{cases}
        \end{aligned}
    \end{equation}
    where the case for $x \in V_N^c$ follows from the fact that $m^\omega_x(\varphi)$ is $\cF_{\{x\}^c}$-measurable and from the equations~\eqref{eq:DLR2_GFF}. By the same equations, we have that for $x \in V_N$
    \begin{equation}
        \begin{aligned}
        \bbE^\omega [ (\varphi_x - m^\omega_x(\varphi))\IND_{\cW^+_N} | \cF_{\{x\}^c} ] &= \frac{1}{\sqrt{2\pi/\mu^\omega_x}} \int_0^\infty (\zeta - m^\omega_x(\varphi)) \exp\Big( -\frac{\mu^\omega_x}{2} (\zeta-m^\omega_x(\varphi))^2 \Big) \De\zeta \\
        &= \frac{1}{\sqrt{2\pi \mu^\omega_x}} \exp \Big( -\frac{\mu^\omega_x}{2}m^\omega_x(\varphi)^2 \Big)
        \end{aligned}
    \end{equation}
    and under $\bbP^{\omega,+}_N$
    \begin{equation}
        \bbP^\omega [\cW^+_N | \cF_{\{x\}^c}] = \bbP^\omega [\varphi_x - m^\omega_x(\varphi) \geq -m^\omega_x(\varphi)  | \cF_{\{x\}^c}] = \phi(m^\omega_x(\varphi)\sqrt{\mu^\omega_x}) \,.
    \end{equation}
\end{proof}
\begin{lemma}\label{lem:EstimateExpExpectationCF}
    There exists a constant $c_{10}>0$ such that for $\bbQ$-almost all $\omega \in \Omega_{\lambda,\Lambda}$, $N \in \bbN$ and $x \in V_N$
    \begin{equation}
        \bbE^{\omega,+}_N \Big[ \exp \Big( - \frac{\mu^\omega_x}{2} m^\omega_x(\varphi)^2 \Big) \Big] \leq \exp \Big( -\frac{1}{2\overline{g}} m^\omega_x(\overline{\varphi}^\omega)^2 + c_{10} |m^\omega_x(\overline{\varphi}^\omega)| \Big) \,,
    \end{equation}
    where $\overline{\varphi}^\omega_y = \bbE^{\omega,+}_N[\varphi_y]$, $y \in \bbZ^d$.
\end{lemma}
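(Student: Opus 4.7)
The plan is to exploit a pointwise completion-of-squares followed by the Brascamp--Lieb inequality~\eqref{eq:BrascampLieb}. Write $\eta := m^\omega_x(\varphi)$ and $\bar\eta := \bbE^{\omega,+}_N[\eta] = m^\omega_x(\overline{\varphi}^\omega)$, and observe that $\eta = \langle f,\varphi\rangle$ with $f_y = (\omega_{x,y}/\mu^\omega_x)\,\mathbbm{1}_{\{y \sim x\}}$ finitely supported. For any $\alpha \in \bbR$, the identity
\begin{equation}
    -\tfrac{\mu^\omega_x}{2}\eta^2 \;=\; \tfrac{\alpha^2}{2\mu^\omega_x} - \alpha\eta - \tfrac{\mu^\omega_x}{2}\big(\eta - \tfrac{\alpha}{\mu^\omega_x}\big)^2
\end{equation}
yields the pointwise upper bound $\exp(-\tfrac{\mu^\omega_x}{2}\eta^2) \leq \exp(\tfrac{\alpha^2}{2\mu^\omega_x} - \alpha\eta)$, so that taking $\bbE^{\omega,+}_N$ reduces the task to estimating the one-dimensional Laplace transform of $\eta$ under the conditioned measure.

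Next I would apply~\eqref{eq:BrascampLieb} with exponential $u$ (using $-\alpha f$ in place of $f$), together with the fact that $\eta$ is a centered Gaussian under $\bbP^\omega$, to get $\bbE^{\omega,+}_N[e^{-\alpha(\eta-\bar\eta)}] \leq \bbE^\omega[e^{-\alpha\eta}] = \exp(\alpha^2\sigma^2/2)$, where $\sigma^2 := \bbV\mathrm{ar}^\omega(\eta)$. Combined with the previous inequality, this produces
\begin{equation}
    \bbE^{\omega,+}_N\big[e^{-\mu^\omega_x\eta^2/2}\big] \;\leq\; \exp\!\Big(\tfrac{\alpha^2}{2}\big(\tfrac{1}{\mu^\omega_x} + \sigma^2\big) - \alpha\bar\eta\Big)\, .
\end{equation}
The algebraic identity that closes the argument is $1/\mu^\omega_x + \sigma^2 = g^\omega(x,x)$, which is a direct consequence of the DLR description~\eqref{eq:DLR2_GFF} and the law of total variance: conditionally on $\cF_{\{x\}^c}$, $\varphi_x$ is $\cN(\eta,1/\mu^\omega_x)$, so that $g^\omega(x,x) = 1/\mu^\omega_x + \bbV\mathrm{ar}^\omega(\eta)$.

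It then remains to optimize in $\alpha \in \bbR$, which minimizes at $\alpha_\ast = \bar\eta/g^\omega(x,x)$ and produces $\exp(-\bar\eta^2/(2g^\omega(x,x))) \leq \exp(-\bar\eta^2/(2\overline{g}))$ since $g^\omega(x,x) \leq \overline{g}$. This is in fact slightly stronger than the claim, and any $c_{10} \geq 0$ will serve in the statement.

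The main point requiring care is justifying that $\bar\eta \geq 0$, so that the unconstrained minimization over $\alpha \in \bbR$ is free of any sign issue. This reduces to the pointwise non-negativity of $\overline{\varphi}^\omega$ on all of $\bbZ^d$: for $y \in V_N$ the bound $\overline{\varphi}^\omega_y \geq 0$ is immediate from the hard wall constraint, and for $y \in V_N^c$ it follows from the harmonicity of $\overline{\varphi}^\omega$ on $V_N^c$ recorded in~\eqref{eq:PreciseExpression-LCF}, combined with the decay at infinity from Remark~\ref{rem:DecayCondExpectationAtInf}, via the random walk representation $\overline{\varphi}^\omega_x = \sum_{y \in \partial_{\mathrm{in}} V_N} P^\omega_x[X_{H_{V_N}} = y, H_{V_N}<\infty]\,\overline{\varphi}^\omega_y \geq 0$. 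Everything else in the argument is routine algebra.
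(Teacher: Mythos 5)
Your proof is correct, and it takes a genuinely different and leaner route than the paper. You linearize pointwise via the completion-of-squares bound $\exp(-\tfrac{\mu^\omega_x}{2}\eta^2)\leq \exp(\tfrac{\alpha^2}{2\mu^\omega_x}-\alpha\eta)$, apply Brascamp--Lieb~\eqref{eq:BrascampLieb} with exponential $u$ to the single linear functional $\eta=m^\omega_x(\varphi)$, invoke the variance identity $1/\mu^\omega_x+\bbV\mathrm{ar}^\omega(m^\omega_x(\varphi))=g^\omega(x,x)$ (which indeed follows from~\eqref{eq:DLR2_GFF} and total variance, and is the same ``direct computation'' the paper records), and then optimize freely over $\alpha\in\bbR$ to arrive at $\exp(-m^\omega_x(\overline{\varphi}^\omega)^2/(2g^\omega(x,x)))\leq\exp(-m^\omega_x(\overline{\varphi}^\omega)^2/(2\overline{g}))$ for $\bbQ$-a.e.\ $\omega$. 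The paper instead recenters by $\overline{\varphi}^\omega$, introduces the tilted measure $\widetilde{\bbP}^\omega_N(\De\varphi)\propto \exp(-\mu^\omega_x m^\omega_x(\varphi)^2/2)\,\hat{\bbP}^\omega_N(\De\varphi)$, bounds $|m^\omega_x(\widetilde{\varphi}^\omega)|$ by a constant $k$ through Jensen, H\"older and Brascamp--Lieb, and this detour is precisely what produces the linear error term $c_{10}|m^\omega_x(\overline{\varphi}^\omega)|$ in the statement. Your argument thus yields a strictly stronger conclusion (no linear term at all), which is more than sufficient for the downstream use in Lemma~\ref{lem:defectharmonicity}. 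One small remark: the quadratic $\tfrac{\alpha^2}{2}g^\omega(x,x)-\alpha\,m^\omega_x(\overline{\varphi}^\omega)$ attains its minimum $-m^\omega_x(\overline{\varphi}^\omega)^2/(2g^\omega(x,x))$ regardless of the sign of $m^\omega_x(\overline{\varphi}^\omega)$, so your final paragraph about the non-negativity of $\overline{\varphi}^\omega$ is not actually needed for the optimization (it is needed elsewhere in the paper, in the proof of Lemma~\ref{lem:defectharmonicity}); the verification you give of it is nonetheless correct.
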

\begin{proof}
    By the definition of $\hat{\bbP}^\omega_N$ as in~\eqref{eq:LawCFRecentered}, we have that
    \begin{equation}
        \begin{aligned}
            &\bbE^{\omega,+}_N \Big[ \exp \Big( - \frac{\mu^\omega_x}{2} m^\omega_x(\varphi)^2 \Big) \Big] \\
            & \quad = \bbE^{\omega,+}_N \Big[ \exp \Big( - \frac{\mu^\omega_x}{2} m^\omega_x(\varphi - \overline{\varphi}^\omega)^2 - \mu^\omega_x m^\omega_x (\varphi - \overline{\varphi}^\omega) m^\omega_x (\overline{\varphi}^\omega) \Big) \Big] \exp \Big( -\frac{\mu^\omega_x}{2}m^\omega_x(\overline{\varphi}^\omega)^2 \Big) \\
            & \quad = \hat{\bbE}^\omega_N \Big[ \exp \Big( - \frac{\mu^\omega_x}{2}m^\omega_x(\varphi)^2 - \mu^\omega_x m^\omega_x(\varphi) m^\omega_x(\overline{\varphi}^\omega) \Big) \Big] \exp \Big( -\frac{\mu^\omega_x}{2} m^\omega_x
            (\overline{\varphi}^\omega)^2 \Big) \,.
        \end{aligned}
    \end{equation}
    Define $\widetilde{\bbP}^\omega_N (\De \varphi) \propto \exp ( -\mu^\omega_x m^\omega_x(\varphi)^2 /2 ) \hat{\bbP}^\omega_N(\De \varphi)$ and $\widetilde{\varphi}^\omega_\cdot = \widetilde{\bbE}^\omega_N [\varphi_\cdot]$, where $\widetilde{\bbE}^\omega_N$ is the expectation associated with $\widetilde{\bbP}^\omega_N$. Then
    \begin{equation}\label{eq:ExpCFEstimate}
        \begin{aligned}
            \bbE^{\omega,+}_N \Big[\exp \Big( -\frac{\mu^\omega_x}{2} m^\omega_x(\varphi)^2 \Big)\Big] &= \widetilde{\bbE}^\omega_N \Big[\exp (-\mu^\omega_x m^\omega_x(\overline{\varphi}^\omega) m^\omega_x(\varphi - \widetilde{\varphi}^\omega)) \Big] \hat{\bbE}^\omega_N \Big[ \exp \Big( - \frac{\mu^\omega_x}{2}m^\omega_x(\varphi)\Big)\Big]\\
            & \quad \cdot \exp \Big( -\frac{\mu^\omega_x}{2} m^\omega_x(\overline{\varphi}^\omega)^2 - \mu^\omega_x m^\omega_x(\widetilde{\varphi}^\omega) m^\omega_x(\overline{\varphi}^\omega) \Big) \,.
        \end{aligned}
    \end{equation}
    Let $\widetilde{\bbP}^\omega(\De \varphi) \propto \exp ( -\mu^\omega_x m^\omega_x(\varphi)^2 /2 ) \bbP^\omega(\De \varphi)$.
    By direct computation, for every $x \in \bbZ^d$ it holds that $\bbE^\omega[m^\omega_x(\varphi)^2] = g^\omega(x,x) - (\mu^\omega_x)^{-1}$. Hence, under $\widetilde{\bbP}^\omega$ the variable $m^\omega_x(\varphi)$ is centered Gaussian with variance
    \begin{equation}\label{eq:VarianceAvgFieldPTilde}
        \widetilde{\bbE}^\omega [m^\omega_x(\varphi)^2] = \Big( \frac{1}{\mu^\omega_x} \Big) \Big[\frac{\mu^\omega_x g^\omega (x,x)-1}{ \mu^\omega_x g^\omega(x,x) }\Big] \,.
    \end{equation}
    Then, by Jensen, H\"older and Brascamp-Lieb inequalities we get
    \begin{equation}
        \begin{aligned}
            |m^\omega_x(\widetilde{\varphi}^\omega)| &= \left| \frac{ \hat{\bbE}^\omega_N [m^\omega_x(\varphi) \exp (-\mu^\omega_x m^\omega_x(\varphi)^2 /2)] }{ \hat{\bbE}^\omega_N[ \exp (-\mu^\omega_x m^\omega_x(\varphi)^2 /2) ] } \right| \leq \hat{\bbE}^\omega_N [m^\omega_x(\varphi)^2]^{1/2} \exp \Big( \frac{\mu^\omega_x}{2}\hat{\bbE}^\omega_N[m^\omega_x(\varphi)^2] \Big)\\
            & \leq \bbE^\omega[m^\omega_x(\varphi)^2]^{1/2} \exp \Big( \frac{\mu^\omega_x}{2} \bbE^\omega[m^\omega_x(\varphi)^2] \Big) = \sqrt{g^\omega(x,x) - \frac{1}{\mu^\omega_x}} \exp \Big( \frac{\mu^\omega_x}{2} g^\omega(x,x) - \frac{1}{2} \Big)\\
            & \leq \sqrt{\,\overline{g}} \exp \Big( d\Lambda\overline{g} - \frac{1}{2} \Big) =: k \,.
        \end{aligned}
    \end{equation}
    Moreover, by Brascamp-Lieb inequality
    \begin{equation}
        \begin{aligned}
            \widetilde{\bbE}^\omega_N [\exp (-\mu^\omega_x m^\omega_x (\overline{\varphi}^\omega) m^\omega_x(\varphi- \widetilde{\varphi}^\omega))] &\leq \widetilde{\bbE}^\omega [\exp (-\mu^\omega_x m^\omega_x(\overline{\varphi}^\omega) m^\omega_x(\varphi))]\\
            &\!\! \stackrel{\eqref{eq:VarianceAvgFieldPTilde}}{=} \exp \Big( \frac{m^\omega_x(\overline{\varphi}^\omega)^2 (\mu^\omega_x g^\omega(x,x)-1)}{2 g^\omega(x,x)} \Big) \,.
        \end{aligned}
    \end{equation}
    Thus, by~\eqref{eq:ExpCFEstimate} 
    \begin{equation}
            \bbE^{\omega,+}_N \Big[ \exp \Big( - \frac{\mu^\omega_x}{2} m^\omega_x(\varphi)^2 \Big) \Big] \leq \exp \Big( - \frac{1}{2 g^\omega(x,x)} m^\omega_x(\overline{\varphi}^\omega)^2 + 2dk |m^\omega_x(\overline{\varphi}^\omega)| \Big) \,.
    \end{equation}
    Finally, since $g^\omega(x,x) \leq \overline{g}$, for $\bbQ$-almost all $\omega$, the claim follows.
\end{proof}

\paragraph{\textbf{Acknowledgements.}} While this work was written, the authors were associated with INdAM (Istituto
Nazionale di Alta Matematica ``Francesco Severi'') and the group GNAMPA.

\bibliographystyle{plain}
\bibliography{literature}

\end{document}